\documentclass{article}
\usepackage{amsmath,amssymb,amsthm}
\usepackage{graphicx,url,color}
\usepackage{mathrsfs}
\usepackage[colorlinks=true]{hyperref}
\usepackage{pdfsync}

\def\R{\mathrm{I\kern-0.21emR}}
\def\N{\mathrm{I\kern-0.21emN}}

\newcommand{\Ran}{\operatorname{Ran}}
\newcommand{\Span}{\operatorname{span}}
\newcommand{\dist}{\operatorname{dist}}
\newcommand{\supp}{\operatorname{supp}}
\newcommand{\Id}{\mathrm{Id}}

\renewcommand{\geq}{\geqslant}
\renewcommand{\leq}{\leqslant}

\newtheorem{theorem}{Theorem}[section]
\newtheorem{proposition}[theorem]{Proposition}
\newtheorem{lemma}[theorem]{Lemma}
\newtheorem{corollary}[theorem]{Corollary}
\theoremstyle{definition}

\newtheorem{example}[theorem]{Example}
\newtheorem{remark}[theorem]{Remark}

\title{Null-controllable data for cascade heat equations:\\ almost sharp spectral spaces and modal critical times}

\author{
Hugo Lhachemi\footnote{\raggedright Universit\'e Paris-Saclay, CNRS, CentraleSup\'elec, Laboratoire des signaux et syst\`emes, 91190 Gif-sur-Yvette, France (\texttt{hugo.lhachemi@centralesupelec.fr}).\endgraf}
\and
Christophe Prieur\footnote{\raggedright Universit\'e Grenoble Alpes, CNRS, Grenoble INP, GIPSA-lab, F-38000 Grenoble, France (\texttt{christophe.prieur@gipsa-lab.fr}).\endgraf}
\and
Emmanuel Tr\'elat\footnote{\raggedright Sorbonne Universit\'e, Universit\'e Paris Cit\'e, CNRS, Inria, Laboratoire Jacques-Louis Lions, LJLL, F-75005 Paris, France (\texttt{emmanuel.trelat@sorbonne-universite.fr}).\endgraf}
}

\date{}

\begin{document}

\maketitle

\begin{abstract}
We study one-dimensional cascades of heat equations controlled through their last component. For a fixed time $T>0$, we consider the Hilbert space $\mathcal{C}_T$ of all initial data that can be steered exactly to zero at time $T$, endowed with the norm of the minimal $L^2$ control. This space can be strongly anisotropic: in explicit examples, it is incomparable with every isotropic spectral Sobolev or sub-Gaussian Gevrey scale associated with an adapted basis.

For two equations, under the block Fattorini--Hautus condition, we give an almost sharp spectral description of $\mathcal{C}_T$. More precisely, in an adapted Riesz basis of generalized eigenvectors we define an explicit increasing scale $(\mathcal{V}_\tau)_{\tau\in\R}$ of Hilbert spaces such that $\mathcal{V}_{T-\varepsilon}\hookrightarrow\mathcal{C}_T\hookrightarrow\mathcal{V}_T$ for every $\varepsilon>0$.
The weights of $\mathcal{V}_T$ combine heat dissipation with the loss of observation inside each double generalized eigenspace. When the coupling and control regions are disjoint, this recovers the known minimal null-control time $T^\star$ and identifies it as a transition time for controllable data: for $T<T^\star$, $\mathcal{C}_T$ and the natural state space are incomparable, whereas for $T>T^\star$, the natural state space is properly contained in $\mathcal{C}_T$.

We extend the analysis to arbitrary finite upper triangular heat cascades. The observation losses of the different directions in each spectral block may have distinct exponential orders, leading to a hierarchy of critical times rather than a single threshold. This hierarchy describes successive gains of null controllability on adapted modal subspaces and accounts for the interactions and cancellations between different coupling paths.
\end{abstract}

\medskip

\noindent\textbf{Keywords.} Coupled parabolic systems, null controllability, null-controllable data, minimal time, block moment method, biorthogonal families, generalized eigenvectors.

\smallskip

\noindent\textbf{MSC 2020.} 93B05, 93C20, 35K05, 35K40, 47D06.

\section{Introduction}
Let $T>0$, let $\omega = (a,b)$ be a nonempty open interval compactly contained in $(0,\pi)$, and let $\beta\in L^\infty(0,\pi)$ be real-valued function. We consider
\begin{equation}\label{system}
\left\{
\begin{array}{ll}
\partial_t y_1 = \partial_{xx}y_1 + \beta\, y_2 & \text{in }(0,T)\times(0,\pi),\\
\partial_t y_2 = \partial_{xx}y_2 + 1_\omega u & \text{in }(0,T)\times(0,\pi),\\
y_1(t,0) = y_1(t,\pi) = y_2(t,0) = y_2(t,\pi) = 0 & \text{for }t\in(0,T),\\
y(0,\cdot) = y^0. &
\end{array}
\right.
\end{equation}
The natural energy space is $\mathcal{H} = L^2(0,\pi;\R^2)$, and the control $u$ belongs to $L^2(0,T;L^2(\omega))$. The first component is reached only through the coupling $\beta y_2$.

Under standard nondegeneracy assumptions, including sign conditions on a coupling region reached by the control, Carleman estimates for parabolic cascades give null controllability in every positive time \cite{AKBD_JEE2009,AKBGBdT_MCRF2011,GBdT_PM2010,GBPG_AA2006}. A different phenomenon appears when
\begin{equation}\label{disjoint}
\supp(\beta)\cap\omega = \varnothing.
\end{equation}
For this geometry, \cite{AKBGBdT_CRM2014,AKBGBdT_JMAA2016} show that null controllability in $\mathcal{H}$ may have a minimal time $T^\star\in[0, +\infty]$: it holds for $T>T^\star$ and fails for $T<T^\star$.

Related positive minimal-time phenomena occur for pointwise control \cite{Dolecki_SM1973}, through condensation of spectral branches \cite{AKBGBdT_JFA2014}, and for Grushin-type equations \cite{BeauchardDardeErvedoza_AIF2020,BMM_JDE2015,DardeKoenigRoyer_AHL2023}. For a boundary-controlled system of two one-dimensional parabolic equations with unequal diffusivities, Diophantine properties of their ratio may decide null controllability \cite{LucaDeTeresa_SeMA2013}. The quantitative Fattorini--Hautus theory of \cite{AKBGBM_JMPA2019}, related to the resolvent viewpoint of \cite{DuyckaertsMiller_JFA2012}, places these phenomena in a common modal framework; \cite{Ouaili_MCRF2020,Samb_COCV2021} treat simultaneous degeneracies of eigenvalues and eigenfunctions.

Null controllability in sufficiently large time for the one-dimensional structurally damped wave equation with a moving control was proved in \cite{MartinRosierRouchon_SICON2013}. Sharp thresholds were subsequently identified for parabolic--transport systems \cite{BeauchardKoenigLeBalch_JEP2020} and for the hyperbolic component of wave--heat cascades \cite{LPT_arXiv2026}. Underactuated constant-coefficient parabolic--transport systems are treated in \cite{KoenigLissy_AIF2026}, with a spectral Kalman characterization for sufficiently regular data in large time and Sobolev obstructions in small time. For the parabolic--transport and wave--heat models, the thresholds are determined, respectively, by the transport component and by finite propagation of the wave. In the present equal-diffusion system, by contrast, the time exponentials themselves create no threshold: the obstruction comes from directions that become almost invisible inside the double eigenspaces.

The number $T^\star$ answers an all-or-nothing question about $\mathcal{H}$. It does not describe which data remain controllable below $T^\star$, or which more singular data become controllable above it. We therefore fix $T$ and study the Hilbert space of data that are exactly null controllable at time $T$,
$$
\mathcal{C}_T = \{y^0\ \mid\ y^0\text{ can be steered to zero at time }T\},
$$
defined below as a subspace of a natural extrapolation Hilbert space and equipped with the norm of the minimal control. The family $(\mathcal{C}_T)_{T>0}$ is increasing, with $\Vert y^0\Vert_{\mathcal{C}_{T_2}}\leq\Vert y^0\Vert_{\mathcal{C}_{T_1}}$ whenever $T_1<T_2$: a control reaching zero at time $T_1$, extended by zero on $(T_1,T_2)$, reaches zero at time $T_2$. For a Hilbert space $X$ of initial data, we define the critical time of $X$ by
\begin{equation}\label{def_TX}
T_X = \inf\{T>0\,\mid\, X\hookrightarrow\mathcal{C}_T\},
\end{equation}
with the convention $\inf\varnothing=+\infty$. This critical time is the infimum of the horizons for which the continuous inclusion $X\hookrightarrow\mathcal{C}_T$ holds. For $X=\mathcal{H}$, this is the classical minimal null-control time of \cite{AKBGBdT_CRM2014,AKBGBdT_JMAA2016}: the minimal time depends on the space of initial data under consideration, and \eqref{def_TX} makes this dependence explicit. Below, $X$ will be the energy space, a heat scale, one of the spaces $\mathcal{V}_\tau$, a modal subspace, or the line spanned by a single datum. We say ``critical time'' rather than ``minimal time'' to stress that $T_X$ is a threshold in the increasing family $(\mathcal{C}_T)_{T>0}$, and to reserve the latter name for $T^\star$.

\medskip

The spectral description starts from the Dirichlet eigenfunctions and the coupling moment
\begin{equation}\label{def_Ik}
\varphi_k(x) = \sqrt{2/\pi}\sin(kx), \qquad I_k = \int_0^\pi\beta(x)\varphi_k(x)^2\, dx.
\end{equation}
The eigenvalue $-k^2$ of the coupled generator has algebraic multiplicity two. After choosing an adapted Jordan basis, the restriction of the adjoint observation to this generalized eigenspace is quantified by
$$
\rho_k = \big(I_k^2\Vert\varphi_k\Vert_{L^2(\omega)}^2 + \Vert\theta_k\Vert_{L^2(\omega)}^2\big)^{1/2},
$$
where $\theta_k$ is constructed in Subsection \ref{spectralblocks} and is orthogonal to $\varphi_k$ on $\omega$. The smaller $\rho_k$ is, the more costly it is to distinguish the two generalized eigenvectors by observing the second component on $\omega$. Throughout the two-equation results we assume the block Fattorini--Hautus condition $\rho_k>0$ for every $k$, equivalently approximate controllability; see \eqref{FH}. We then set
\begin{equation}\label{Tstarintro}
T^\star = \limsup_{k\to +\infty}\frac{-\log\rho_k}{k^2}\in[0,+\infty].
\end{equation}
Proposition \ref{Hcomparison} proves that $T^\star = T_{\mathcal{H}}$ in the notation \eqref{def_TX}: for every coupling $\beta$ and every control interval $\omega$ satisfying \eqref{FH}, the number $T^\star$ is the critical time of the energy space. Under \eqref{disjoint}, $\rho_k$ is equivalent, up to one power of $k$, to the geometric moments of \cite{AKBGBdT_JMAA2016,BoyerMorancey_AMBP2025}, so that $T^\star$ coincides with the minimal time of \cite{AKBGBdT_JMAA2016}; this justifies the notation. The infimum in \eqref{def_TX} does not decide the inclusion when $T = T^\star$.

\medskip
Our first main result introduces the explicit Hilbert scale
$$
\Vert y\Vert_{\mathcal{V}_\tau}^2
= \sum_{k\geq1}e^{-2\tau k^2}
\left(\frac{\vert c_{1,k}\vert^2}{\rho_k^2}+\vert c_{2,k}\vert^2\right),
\qquad
y=\sum_{k\geq1}(c_{1,k}\Phi_{1,k}+c_{2,k}\Phi_{2,k}).
$$
Here $(\Phi_{1,k},\Phi_{2,k})_{k\geq1}$ is the adapted Riesz basis of the coupled generator constructed in Subsection \ref{spectralblocks}. We prove that
\begin{equation}\label{sandwichintro}
\mathcal{V}_{T-\varepsilon}\hookrightarrow\mathcal{C}_T\hookrightarrow\mathcal{V}_T \qquad \forall\varepsilon>0 . 
\end{equation}
The two inclusions give an explicit, almost sharp spectral description of the Hilbert space $\mathcal{C}_T$ of null-controllable data: the necessary and sufficient weights have the same leading Gaussian factor $e^{-2Tk^2}$ and the same anisotropic factor $\rho_k^{-2}$. They differ only by factors of order $e^{\mathrm{o}(k^2)}$, which can be absorbed by replacing $T$ with $T-\varepsilon$ in the sufficient condition. A one-frequency test proves, in particular, that neither $e^{-2Tk^2}$ nor $\rho_k^{-2}$ can be improved at the leading exponential scale. The common analytic tool is Proposition \ref{twosidedmoment}: for every fixed multiplicity $N$, it combines a vector-valued M\"untz--Sz\'asz lower estimate for $(t^me^{-k^2t})_{k\geq1,\,0\leq m<N}$ with a Hilbert--Schur upper estimate and a sharper upper bound at a single frequency \cite{AKBGBdT_JMPA2011,Boyer_Notes2023,BoyerMorancey_CRM2023,FCGBdT_JFA2010}. The first theorem uses its specialization to $N=2$.

The comparison \eqref{sandwichintro} turns $T^\star$ into a transition time for the nature of the controllable-data space. If $T<T^\star$, then $\mathcal{C}_T$ contains data more singular than $L^2$ in the directly controlled component, but excludes some $L^2$ data in the weakly observed direction; thus $\mathcal{C}_T$ and $\mathcal{H}$ are incomparable. If $T>T^\star$, then $\mathcal{H}$ is properly contained in $\mathcal{C}_T$. The arithmetic example in Subsection \ref{sec_3examples} shows that below $T^\star$ no isotropic spectral Sobolev or sub-Gaussian Gevrey norm in the adapted basis can describe $\mathcal{C}_T$.

\medskip
The spectral weights express a simple competition. During the free evolution, the $k$-th frequency is damped by $e^{-Tk^2}$; observing its weak generalized-eigenvector direction costs a factor of order $\rho_k^{-1}$. A vector moment estimate shows that, up to subquadratic losses, these two effects combine multiplicatively. This is the origin of the weight $e^{-2Tk^2}\rho_k^{-2}$ in the squared norm of $\mathcal{V}_T$.

\medskip
Our second main result concerns $N$ heat equations with a bounded upper triangular coupling matrix. On the $k$-th generalized eigenspace, the nilpotent part $J_k$ satisfies
$e^{tJ_k^*}=\sum_{m=0}^{N-1}\frac{t^m}{m!}(J_k^*)^m$.
The observed adjoint solution is therefore $e^{-k^2t}$ times a polynomial of degree at most $N-1$. Its coefficient map has $N$ singular values
$\mu_{1,k}\geq\cdots\geq\mu_{N,k}>0$. The $i$-th singular directions, taken over all frequencies, define a modal subspace $X_i$ of the initial-data space. Their observation loss on the Gaussian scale $e^{-Tk^2}$ defines a number $\tau_i$, which Theorem \ref{modalcriticalN} identifies with the critical time of $X_i$. For $N=2$, the two times are $0$ and $T^\star$. For $N\geq3$, the intermediate values $\tau_2,\ldots,\tau_{N-1}$ distinguish modal subspaces that the single threshold $\tau_N$ of the full energy space cannot distinguish. Already for three equations, several directed coupling paths and cancellations between observed coefficients may alter these values, so consecutive coupling moments alone are not sufficient in general.

\medskip

Weighted modal spaces of controllable data for parabolic equations with a fixed spatial control function or point control occur in \cite{MicuZuazua_SICON2006,Zuazua_JEMS2011}; regularity of HUM minimizers and minimal boundary controls is studied in \cite{MicuZuazua_SCL2011}; the Grushin setting is treated in \cite{BMM_JDE2015}. For the critical half-heat equation, almost-sharp conditions on null-controllable initial states, together with the time-independence of their space, are obtained in \cite{HartmannKoenig_arXiv2025}. The block moment method of \cite{BBM_AHL2020,BoyerMorancey_CRM2023} defines minimal times for a datum and for a closed subspace through finite-dimensional block costs. For the present two-equation model, \cite{BoyerMorancey_AMBP2025} computes the double-block denominator used below and reduces it to geometric moments when the coupling and control regions are disjoint. We use this modal information to obtain two-sided Hilbert-space estimates for the canonical range at every horizon and to compare that range with $L^2$.

Terminal reachable states for the scalar heat equation form a related but different problem. Boundary reachable sets and their holomorphic structure are studied in \cite{DardeErvedoza_SICON2018,HartmannKellayTucsnak_JEMS2020,HartmannOrsoni_JMPA2021,KellayNormandTucsnak_APDE2022,MartinRosierRouchon_AMRX2016,Orsoni_JFA2021,StrohmaierWaters_MZ2022}. Distributed controls and perturbations are treated in \cite{ChenRosier_CRM2022,ErvedozaLeBalchTucsnak_JFA2022}. Here the terminal target is zero and the object to be described is the set of admissible initial data. The range-theoretic work \cite{Lissy_JMAA2026} is also relevant: it treats hybrid range inclusions for a constant-coupling equal-diffusion cascade with a mixed null and approximate objective.

\paragraph{Structure of the paper.}
Sections \ref{twocascade} and \ref{finitecascades} treat the two-equation case and finite cascades, respectively. Both follow the same progression: spectral preliminaries, the fixed-horizon theorem, critical times and comparison with heat scales, proof of the theorem, then examples. The operator-range framework is introduced once, in Subsection \ref{sec_range_duality}, and reused for finite cascades. Subsections \ref{spectralphenomena} and \ref{sec_3examples} treat the disjoint geometry and nonstandard examples for two equations; the general case concludes with remarks on the three-equation block and a nonlocal inverse-design construction. Appendix \ref{appendix} contains the common moment estimate (Subsection \ref{appendixmoments}) and the proof of the adapted block-basis lemma (Subsection \ref{app_proof_lem_rieszN}).

\section{The two-equation cascade}\label{twocascade}
\subsection{Spectral preliminaries}\label{spectralblocks}
\paragraph{Generator and adapted Jordan bases.}
System \eqref{system} is written in the abstract form $y' = Ay + Bu$ with
$$
A = \begin{pmatrix}\partial_{xx} & \beta \\ 0 & \partial_{xx} \end{pmatrix}, \qquad Bu = \begin{pmatrix}0\\1_\omega u\end{pmatrix},
$$
with $D(A) = H^2(0,\pi;\R^2)\cap H_0^1(0,\pi;\R^2)$. The operator $A$ generates an analytic semigroup on $\mathcal{H} = L^2(0,\pi;\R^2)$, and $B\in L(L^2(0,\pi),\mathcal{H})$. The adjoint observation operator is $B^*(f,g) = g_{\vert\omega}$.

For $T>0$, let
\begin{equation}\label{globalgramian}
L_Tu=\int_0^T e^{(T-t)A}Bu(t)\,dt,
\qquad
G_T=L_TL_T^*=\int_0^T e^{(T-t)A}BB^*e^{(T-t)A^*}\,dt.
\end{equation}
The controllability Gramian $G_T$ is a bounded, nonnegative selfadjoint operator on $\mathcal{H}$, and
\begin{equation}\label{gramianobservationidentity}
\langle G_Tz,z\rangle_{\mathcal{H}}
=\Vert L_T^*z\Vert_{L^2(0,T;L^2(\omega))}^2
=\int_0^T \Vert B^* e^{sA^*}z\Vert_{L^2(\omega)}^2\, ds.
\end{equation}

For every $k \geq 1$, recalling that $\varphi_k$ and $I_k$ are defined by \eqref{def_Ik}, let $\psi_k$ be the unique solution of
\begin{equation}\label{psik}
-\psi_k''-k^2\psi_k = (\beta-I_k)\varphi_k,\qquad \psi_k(0) = \psi_k(\pi) = 0,\qquad \langle\psi_k,\varphi_k\rangle_{L^2(0,\pi)} = 0.
\end{equation}
Indeed, writing $(\beta-I_k)\varphi_k=\sum_{n\geq1}f_n\varphi_n$, the definition \eqref{def_Ik} of $I_k$ gives $f_k=\langle\beta\varphi_k,\varphi_k\rangle_{L^2(0,\pi)}-I_k=0$, and the equation in \eqref{psik} amounts to $(n^2-k^2)\langle\psi_k,\varphi_n\rangle_{L^2(0,\pi)}=f_n$ for every $n\geq1$. The $k$-th of these identities is satisfied whatever the value of $\langle\psi_k,\varphi_k\rangle_{L^2(0,\pi)}$, which the last condition in \eqref{psik} sets to zero, while the other coefficients are uniquely determined. Since $\vert n^2-k^2\vert=\vert n-k\vert(n+k)\geq2k-1$ for $n\neq k$, the resulting series converges in $L^2(0,\pi)$ and
$$
\Vert\psi_k\Vert_{L^2(0,\pi)}\leq\frac{\Vert(\beta-I_k)\varphi_k\Vert_{L^2(0,\pi)}}{2k-1}\leq\frac{2\Vert\beta\Vert_{L^\infty(0,\pi)}}{2k-1} ,
$$
using $\Vert\varphi_k\Vert_{L^2(0,\pi)}=1$ and $\vert I_k\vert\leq\Vert\beta\Vert_{L^\infty(0,\pi)}$. Moreover $\psi_k\in H^2(0,\pi)\cap H_0^1(0,\pi)$, since $n^2/(n^2-k^2)$ is bounded with respect to $n\neq k$. Setting
\begin{equation}\label{deltatheta}
\delta_k = \frac{\langle\psi_k,\varphi_k\rangle_{L^2(\omega)}}{\Vert\varphi_k\Vert_{L^2(\omega)}^2},\qquad \theta_k = \psi_k-\delta_k\varphi_k,
\end{equation}
one has $\langle\theta_k,\varphi_k\rangle_{L^2(\omega)} = 0$. Since $\Vert\varphi_k\Vert_{L^2(\omega)}^2$ is positive for every $k$ and converges to $\frac{b-a}{\pi}$ as $k\to+\infty$, there exists $m_\omega>0$ such that
\begin{equation}\label{alphabounds}
m_\omega \leq \alpha_k = \Vert\varphi_k\Vert_{L^2(\omega)}^2 \leq 1.
\end{equation}
The Cauchy--Schwarz inequality and \eqref{alphabounds} give $\vert\delta_k\vert\leq\Vert\psi_k\Vert_{L^2(\omega)}/\sqrt{\alpha_k}\leq\Vert\psi_k\Vert_{L^2(0,\pi)}/\sqrt{m_\omega}$, and then $\Vert\theta_k\Vert_{L^2(0,\pi)}\leq\Vert\psi_k\Vert_{L^2(0,\pi)}+\vert\delta_k\vert$. With the bound on $\Vert\psi_k\Vert_{L^2(0,\pi)}$ obtained above, this gives
\begin{equation}\label{psiestimates}
\Vert\psi_k\Vert_{L^2(0,\pi)} + \vert\delta_k\vert + \Vert\theta_k\Vert_{L^2(0,\pi)} \leq \frac{C}{k}.
\end{equation}

\begin{lemma}[Adapted Jordan bases]\label{lem_Jordan}
For $k \geq 1$, set
$$
\Phi_{1,k} = \begin{pmatrix}\varphi_k\\0\end{pmatrix},\qquad \Phi_{2,k} = \begin{pmatrix}\psi_k + \delta_k\varphi_k\\\varphi_k\end{pmatrix},\qquad \Psi_{1,k} = \begin{pmatrix}\varphi_k\\\theta_k\end{pmatrix},\qquad \Psi_{2,k} = \begin{pmatrix}0\\\varphi_k\end{pmatrix}.
$$
The families $(\Phi_{1,k},\Phi_{2,k})_{k \geq 1}$ and $(\Psi_{1,k},\Psi_{2,k})_{k \geq 1}$ are dual Riesz bases of $\mathcal{H}$. We also write $\Phi_kc=c_1\Phi_{1,k}+c_2\Phi_{2,k}$ and $\Psi_ka=a_1\Psi_{1,k}+a_2\Psi_{2,k}$ for the corresponding block maps. These vectors satisfy
\begin{equation}\label{Jordanrelations}
\begin{aligned}
(A + k^2)\Phi_{1,k}& = 0, & (A + k^2)\Phi_{2,k}& = I_k\Phi_{1,k},\\
(A^*+k^2)\Psi_{1,k}& = I_k\Psi_{2,k}, \quad& (A^*+k^2)\Psi_{2,k}& = 0.
\end{aligned}
\end{equation}
Moreover,
\begin{equation}\label{observedbasis}
B^*\Psi_{1,k} = \theta_{k\vert\omega},\qquad B^*\Psi_{2,k} = \varphi_{k\vert\omega},\qquad \langle\theta_k,\varphi_k\rangle_{L^2(\omega)} = 0.
\end{equation}
\end{lemma}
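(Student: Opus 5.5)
The proof splits into three parts: the Jordan relations and the observation formulas are direct computations, and the Riesz basis property follows by a perturbation argument.

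\emph{Jordan relations.} Since $\partial_{xx}\varphi_k=-k^2\varphi_k$, we get $(A+k^2)\Phi_{1,k}=0$. For $\Phi_{2,k}$, the first component of $(A+k^2)\Phi_{2,k}$ is
$$
\psi_k''+k^2\psi_k+\delta_k(\varphi_k''+k^2\varphi_k)+\beta\varphi_k=-(\beta-I_k)\varphi_k+\beta\varphi_k=I_k\varphi_k,
$$
using \eqref{psik}. The second component is $\varphi_k''+k^2\varphi_k=0$. For the adjoint,
$$
A^*=\begin{pmatrix}\partial_{xx}&0\\ \beta&\partial_{xx}\end{pmatrix}.
$$
This gives $(A^*+k^2)\Psi_{2,k}=0$ directly. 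For $\Psi_{1,k}$, the first component of $(A^*+k^2)\Psi_{1,k}$ vanishes and the second equals $\beta\varphi_k+\theta_k''+k^2\theta_k$. Since $\theta_k=\psi_k-\delta_k\varphi_k$, this reduces to $\beta\varphi_k-(\beta-I_k)\varphi_k=I_k\varphi_k$, that is, $I_k\Psi_{2,k}$. The identities \eqref{observedbasis} are immediate from $B^*(f,g)=g_{\vert\omega}$ and from \eqref{deltatheta}.

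\emph{Biorthogonality.} All pairings $\langle\Phi_{i,k},\Psi_{j,n}\rangle_{\mathcal{H}}$ vanish for $n\neq k$. Indeed, $\Phi_{i,k}$ and $\Psi_{j,n}$ lie in generalized eigenspaces of $A$ and $A^*$ for the distinct eigenvalues $-k^2$ and $-n^2$. Concretely, one can use that $\ker(A+k^2)^2$ is annihilated by $\ker(A^*+n^2)^2$: write $\langle (A+k^2)^2u,v\rangle=\langle u,(A^*+k^2)^2v\rangle$ and note that $A^*+k^2$ is invertible on the generalized eigenspace of $-n^2$. Within a block, the pairings are
$$
\langle\Phi_{1,k},\Psi_{1,k}\rangle=1,\quad \langle\Phi_{1,k},\Psi_{2,k}\rangle=0,\quad \langle\Phi_{2,k},\Psi_{2,k}\rangle=1,
$$
$$
\langle\Phi_{2,k},\Psi_{1,k}\rangle=\langle\psi_k,\varphi_k\rangle+\delta_k+\langle\varphi_k,\theta_k\rangle_{L^2(0,\pi)}.
$$
For the last pairing, $\langle\psi_k,\varphi_k\rangle=0$ by \eqref{psik}, and $\langle\varphi_k,\theta_k\rangle_{L^2(0,\pi)}=\langle\varphi_k,\psi_k\rangle-\delta_k=-\delta_k$. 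Hence the sum is $0$, which is precisely why $\delta_k$ is added in $\Phi_{2,k}$ and subtracted in $\theta_k$. I would actually just compute all cross pairings, which is simplest.

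\emph{Riesz property (main step).} Compare $(\Phi_{1,k},\Phi_{2,k})$ with the orthonormal basis $e_{1,k}=(\varphi_k,0)$, $e_{2,k}=(0,\varphi_k)$ of $\mathcal{H}$. The differences are $\Phi_{2,k}-e_{2,k}=(\psi_k+\delta_k\varphi_k,0)$, whose norm is at most $C/k$ by \eqref{psiestimates}. However, $\sum_k k^{-2}<\infty$ alone only gives a Hilbert--Schmidt (hence compact) perturbation of the identity, not a small one. The fix is a Fredholm argument. Define the operator $S$ by $Se_{i,k}=\Phi_{i,k}$. Then $S-\Id$ has Hilbert--Schmidt norm squared $\sum_k\Vert\psi_k+\delta_k\varphi_k\Vert^2<\infty$, so $S$ is bounded and Fredholm of index zero. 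By biorthogonality, the family $(\Phi_{i,k})$ is $\omega$-linearly independent: if $Sc=0$, pairing with $\Psi_{j,n}$ (which are bounded vectors) gives $c=0$. So $S$ is injective, hence invertible, and $(\Phi_{i,k})$ is a Riesz basis. Its dual basis is unique and biorthogonal, so it coincides with $(\Psi_{i,k})$ by the computation above. Alternatively, one runs the same argument for $\Psi$, using $\Vert\theta_k\Vert\leq C/k$. The only delicate point is this compact-perturbation step, in place of a Paley--Wiener smallness argument.
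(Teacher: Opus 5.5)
Your proof is correct, but it reaches the Riesz property and the identification of the dual basis by a different route than the paper.

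\emph{The paper's route.} The paper applies the triangular criterion of Lemma \ref{lem_triangular}. The perturbation $K$ with $Ke_{1,k}=0$ and $Ke_{2,k}=(\psi_k+\delta_k\varphi_k,0)$ sends the second component into the first and annihilates the first, so $K^2=0$. Hence $\Id+K$ is invertible with explicit inverse $\Id-K$. The ``delicate point'' you flag therefore does not arise: neither smallness nor Fredholm theory is needed, only nilpotency. The paper then writes the dual basis as $(\Id-K^*)e_{j,k}$. It identifies this with $\Psi_{j,k}$ by computing $K^*(\varphi_k,0)$ explicitly, using $\langle\varphi_k,\psi_j\rangle_{L^2(0,\pi)}=-\langle\psi_k,\varphi_j\rangle_{L^2(0,\pi)}$ for $j\neq k$. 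Biorthogonality is never checked as a separate step.

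\emph{Your route.} You first verify biorthogonality: across blocks from the Jordan relations and $k^2\neq n^2$, and within a block by direct computation, where your $\delta_k$ cancellation is exactly right. You then get injectivity of $S=\Id+(\text{Hilbert--Schmidt})$, conclude invertibility by the Fredholm alternative, and identify the dual basis by uniqueness. This is essentially the Bari-type criterion that the paper mentions after Lemma \ref{lem_triangular} and deliberately avoids.

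\emph{Comparison.} Your argument is more robust: it never uses the triangular structure, so it would work for any compact perturbation of an orthonormal basis that admits a biorthogonal family. The price is non-explicit Riesz constants and reliance on Fredholm theory. The paper's argument is elementary and quantitative. It also gives the flag structure of the dual basis for free, which is reused in the $N$-equation construction.

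Your cross-block step is close to the Sylvester-equation argument in the paper's proof of Lemma \ref{rieszN}. It is cleanest if you restrict to the two-dimensional $A^*$-invariant span of $\Psi_{1,n},\Psi_{2,n}$, where $A^*+k^2$ acts as $(k^2-n^2)\Id$ plus a nilpotent matrix. That avoids any appeal to $\ker(A^*+n^2)^2$ and its dimension.
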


\begin{proof}
The relations in \eqref{Jordanrelations} follow from \eqref{psik}; adding a multiple of $\varphi_k$ to the first component of $\Phi_{2,k}$ does not change them. By the definition of $\delta_k$, the two observed functions $B^*\Psi_{1,k}=\theta_{k\vert\omega}$ and $B^*\Psi_{2,k}=\varphi_{k\vert\omega}$ are orthogonal in $L^2(\omega)$, which is \eqref{observedbasis}.

We apply the triangular criterion of Lemma \ref{lem_triangular}, proved in Subsection \ref{app_triangular} of the appendix, with $\mathcal{X}_1=L^2(0,\pi)\times\{0\}$ and $\mathcal{X}_2=\{0\}\times L^2(0,\pi)$, with the orthonormal basis $e_{1,k}=(\varphi_k,0)$, $e_{2,k}=(0,\varphi_k)$ of $\mathcal{H}$, and with the corrections $\eta_{1,k}=0$ and $\eta_{2,k}=(\psi_k+\delta_k\varphi_k,0)$, which are square summable by \eqref{psiestimates}. Hence $(\Phi_{1,k},\Phi_{2,k})_{k\geq1}$ is a Riesz basis of $\mathcal{H}$, whose dual basis is given by $e_{j,k}-K^*e_{j,k}$, where $K(0,\varphi_k)=(\psi_k+\delta_k\varphi_k,0)$ and $K(\varphi_k,0)=0$. Since $K^*$ vanishes on $\mathcal{X}_2$, the dual vector of $\Phi_{2,k}$ is $(0,\varphi_k)=\Psi_{2,k}$. That of $\Phi_{1,k}$ is $(\varphi_k,0)-K^*(\varphi_k,0)$, where
$$
K^*(\varphi_k,0)=\sum_{j\geq1}\langle\varphi_k,\psi_j+\delta_j\varphi_j\rangle_{L^2(0,\pi)}\,(0,\varphi_j).
$$
Expanding \eqref{psik} in the Dirichlet basis gives, for $j\neq k$,
$$
\langle\varphi_k,\psi_j\rangle_{L^2(0,\pi)}=\frac{\langle\beta\varphi_j,\varphi_k\rangle_{L^2(0,\pi)}}{k^2-j^2}=-\langle\psi_k,\varphi_j\rangle_{L^2(0,\pi)},
$$
since $\beta$ is real-valued, while $\langle\varphi_k,\psi_k+\delta_k\varphi_k\rangle_{L^2(0,\pi)}=\delta_k$. As $\psi_k=\sum_{j\neq k}\langle\psi_k,\varphi_j\rangle_{L^2(0,\pi)}\varphi_j$, we obtain $K^*(\varphi_k,0)=(0,-\psi_k+\delta_k\varphi_k)$, and the dual vector of $\Phi_{1,k}$ is $(\varphi_k,\psi_k-\delta_k\varphi_k)=\Psi_{1,k}$.

Generalized eigenspaces belonging to distinct eigenvalues of a nonnormal operator need not be mutually orthogonal; the property used below is the biorthogonality of the primal and adjoint bases.
\end{proof}

For later use, the semigroup formulas are
\begin{align}
e^{tA}\Phi_{1,k} &= e^{-k^2t}\Phi_{1,k}, & e^{tA}\Phi_{2,k} &= e^{-k^2t}(\Phi_{2,k} + tI_k\Phi_{1,k}), \label{semigroupblocks} \\
e^{tA^*}\Psi_{1,k} &= e^{-k^2t}(\Psi_{1,k} + tI_k\Psi_{2,k}), & e^{tA^*}\Psi_{2,k} &= e^{-k^2t}\Psi_{2,k}. \label{adjointblocks}
\end{align}

\paragraph{The modal observation defect.}

Formula \eqref{adjointblocks} gives
$$
B^*e^{sA^*}\Psi_{1,k} = e^{-k^2s}(\theta_k+sI_k\varphi_k)_{\vert\omega},
\qquad B^*e^{sA^*}\Psi_{2,k} = e^{-k^2s}\varphi_{k\vert\omega}.
$$
Since $\theta_k$ and $\varphi_k$ are orthogonal in $L^2(\omega)$, we have
$$
\Vert B^*e^{sA^*}\Psi_{1,k}\Vert_{L^2(\omega)}^2 = e^{-2k^2s}\big(\Vert\theta_k\Vert_{L^2(\omega)}^2+s^2I_k^2\alpha_k\big),
\qquad
\Vert B^*e^{sA^*}\Psi_{2,k}\Vert_{L^2(\omega)}^2 = e^{-2k^2s}\alpha_k.
$$
The two coefficients of the observed polynomial $\theta_{k\vert\omega}+sI_k\varphi_{k\vert\omega}$ have squared norm
\begin{equation}\label{rhok}
\rho_k^2=\Vert\theta_k\Vert_{L^2(\omega)}^2+I_k^2\alpha_k=\Vert\theta_{k\vert\omega}\Vert_{L^2(\omega)}^2+\Vert I_k\varphi_{k\vert\omega}\Vert_{L^2(\omega)}^2.
\end{equation}
This coefficient norm, rather than the observation at one particular time, is the quantity appearing in the block moment estimate in Subsection~\ref{proofmain}. It measures how well the first direction of the double generalized eigenspace is seen through the constant and linear coefficients together.

Unless explicitly stated otherwise, we assume in the two-equation analysis that
\begin{equation}\label{FH}
\rho_k>0\qquad\forall k \geq 1.
\end{equation}
This is the Fattorini--Hautus condition $\ker(A^*+k^2)\cap\ker B^* = \{0\}$ on every double generalized eigenspace. Indeed, if $I_k\neq0$, then $\ker(A^*+k^2)$ is spanned by $\Psi_{2,k}$, whose observation $\varphi_{k\vert\omega}$ never vanishes; if $I_k = 0$, then $\ker(A^*+k^2)$ is spanned by $\Psi_{1,k}$ and $\Psi_{2,k}$, and, by \eqref{observedbasis}, the Fattorini--Hautus condition reduces to $\theta_{k\vert\omega}\neq0$. In both cases it is equivalent to $\rho_k>0$. Condition \eqref{FH} is therefore equivalent to approximate controllability \cite{AKBGBM_JMPA2019,BoyerOlive_MCRF2014,Fattorini_SICON1966}. It does not require $I_k\neq0$: the restriction of $\theta_k$ to $\omega$ may distinguish the missing direction even when the algebraic Jordan coefficient vanishes.

The elementary bounds \eqref{alphabounds} and \eqref{rhok} imply
$$
\vert I_k\vert \leq \frac{\rho_k}{\sqrt{m_\omega}},\qquad \rho_k \leq C.
$$
The first estimate controls every finite-time triangular correction uniformly. The second one implies that the number $T^\star$ in \eqref{Tstarintro} belongs to $[0, +\infty]$.

\subsection{Operator-range framework and range duality}\label{sec_range_duality}
Given any $\sigma\in\R$, let $W_\sigma$ be the space of all formal series $y = \sum_k(c_{1,k}\Phi_{1,k} + c_{2,k}\Phi_{2,k})$ such that
\begin{equation}\label{Wsigma}
\Vert y\Vert_{W_\sigma}^2 = \sum_{k \geq 1}e^{2\sigma k^2}(\vert c_{1,k}\vert^2 + \vert c_{2,k}\vert^2) < +\infty.
\end{equation} 
For $\sigma \geq 0$, $W_\sigma$ is a dense subspace of $\mathcal{H} = L^2(0,\pi;\R^2)$, with $W_0=\mathcal{H}$, and for $\sigma<0$, $W_\sigma$ is the completion of $\mathcal{H}$
for the norm \eqref{Wsigma}. The Riesz basis property and \eqref{semigroupblocks} show that, for $t\geq 0$, $e^{tA}$ maps $W_\sigma$ continuously into $W_{\sigma + t}$. The triangular transformation of the coefficients in \eqref{semigroupblocks}, as well as its inverse, is uniformly bounded at every fixed time. Consequently, $e^{TA}:W_{-T}\to\mathcal{H}$ is an isomorphism. For $\sigma \geq 0$, one has
\begin{equation}\label{semigrouprange}
W_\sigma = \Ran(e^{\sigma A}),
\end{equation}
with equivalence between \eqref{Wsigma} and the quotient norm on the range space.

Recall the controllability map $L_T$ from \eqref{globalgramian}, and set
$$
U_T=L^2(0,T;L^2(\omega)),
\qquad
\mathcal{R}_T=\Ran L_T.
$$ 
The terminal reachable space $\mathcal{R}_T$, endowed with the operator-range norm
\begin{equation}\label{RTnorm}
\Vert f\Vert_{\mathcal{R}_T} = \inf\{\Vert u\Vert_{U_T}\ \mid\  L_Tu=f\},
\end{equation}
is a Hilbert space isometric to $U_T/\ker L_T$, or equivalently to $(\ker L_T)^\perp$ through the minimum-norm preimage. This classical operator-range structure is recalled in \cite[(2.9) and Remark~2.1]{Tucsnak_ICM2022} and \cite[(2.6) and Remark~2.1]{ErvedozaLeBalchTucsnak_JFA2022}. The range-inclusion and duality principles used below are classical as well (see \cite{DoleckiRussell_SICON1977,Douglas_PAMS1966}, see also \cite[Proposition~12.1.2]{TucsnakWeiss_book2009}).

We now transport this Hilbert structure to initial data through the isomorphism $e^{TA}:W_{-T}\to\mathcal{H}$. We define the space of null-controllable initial data by
$$
\mathcal{C}_T
=\{y^0\in W_{-T}\ \mid\ -e^{TA}y^0\in\mathcal{R}_T\}.
$$
We equip it with the norm
\begin{equation}\label{CTnorm}
\Vert y^0\Vert_{\mathcal{C}_T}=\Vert e^{TA}y^0\Vert_{\mathcal{R}_T}=\inf\{\Vert u\Vert_{U_T}\ \mid\ e^{TA}y^0+L_Tu=0\}.
\end{equation}
The restriction $(e^{TA})_{\vert\mathcal{C}_T}:\mathcal{C}_T\rightarrow\mathcal{R}_T$ is then an onto isometry. Consequently, $\mathcal{C}_T$ is a Hilbert space continuously embedded in $W_{-T}$. Equivalently, $\mathcal{C}_T$ is the range of the bounded map $-(e^{TA})^{-1}L_T:U_T\to W_{-T}$, endowed with its operator-range norm \eqref{CTnorm}; here $(e^{TA})^{-1}$ denotes the inverse from $\mathcal{H}$ to $W_{-T}$.


\begin{remark}
By \eqref{CTnorm}, given any Hilbert space $X$ continuously embedded in $W_{-T}$, the inclusion $X\hookrightarrow\mathcal{C}_T$ is continuous if and only if there exists $C_{X,T}>0$ such that, for every $y^0\in X$, one can find $u\in U_T$ satisfying $e^{TA}y^0+L_Tu=0$ and $\Vert u\Vert_{U_T}\leq C_{X,T}\Vert y^0\Vert_X$.
In this sense, $\mathcal{C}_T$ is the maximal Hilbert space of null-controllable data in $W_{-T}$ with a uniform $L^2$ cost. For every $y^0\in W_{-T}$ and $u\in U_T$, the corresponding mild trajectory belongs to $C([0,T],W_{-T})$, while its terminal state $e^{TA}y^0+L_Tu$ belongs to $\mathcal{H}$. Thus the terminal constraint above is an identity in $\mathcal{H}$. The choice of the ambient space $W_{-T}$ loses no null-controllable datum of the heat scale: since $e^{TA}$ maps $W_\sigma$ isomorphically onto $W_{\sigma+T}$ for every $\sigma\in\R$ and $L_Tu\in\mathcal{H}$ for every $u\in U_T$, a datum $y^0\in W_\sigma$ with $\sigma<-T$ satisfies $e^{TA}y^0+L_Tu = 0$ for some control only if $e^{TA}y^0\in\mathcal{H}$, i.e., only if $y^0\in W_{-T}$.
\end{remark}

\begin{remark}[Dual norms]
Under \eqref{FH}, approximate controllability gives $\ker L_T^*=\{0\}$, $\mathcal{R}_T$ is dense in $\mathcal{H}$, and one has the pivot triple $\mathcal{R}_T\hookrightarrow\mathcal{H}\simeq\mathcal{H}'
\hookrightarrow\mathcal{R}_T'$ (continuous dense inclusions, with $\mathcal{H}$ as a pivot space). The continuous dual $\mathcal{R}_T'$ of $\mathcal{R}_T$ is canonically isometric to the completion of $\mathcal{H}$ for the dual norm $\Vert z\Vert_{\mathcal{R}_T'}=\Vert L_T^*z\Vert_{U_T}=\langle G_Tz,z\rangle_{\mathcal{H}}^{1/2}$ (see \cite[Definition~5.2 and Proposition~5.8]{ErvedozaLeBalchTucsnak_JFA2022}). Here $z\in\mathcal{H}$ represents the functional $\mathcal{R}_T\ni f\mapsto\langle z,f\rangle_{\mathcal{H}}$. We write $\langle f,\ell\rangle_{\mathcal{R}_T,\mathcal{R}_T'}=\ell(f)$; this pairing agrees with $\langle f,z\rangle_{\mathcal{H}}$ when $\ell$ is represented by $z\in\mathcal{H}$.
\end{remark}

\begin{remark}
When equipping $W_{-T}$ with the equivalent norm $\Vert e^{TA}y\Vert_{\mathcal{H}}$, the map $e^{TA}:W_{-T}\to\mathcal{H}$ is an onto isometry. Under \eqref{FH}, the equality $e^{TA}(\mathcal{C}_T)=\mathcal{R}_T$ also shows that $\mathcal{C}_T$ is dense in $W_{-T}$, and hence $\mathcal{C}_T\hookrightarrow W_{-T}\hookrightarrow\mathcal{C}_T'$ is another pivot triple, if one chooses $W_{-T}$ as a pivot space. 
\end{remark}

\begin{lemma}\label{rangeHUM}
Assume that $\ker L_T^*=\{0\}$.
For $y^0\in W_{-T}$, one has
\begin{equation}\label{rangecriterionCT}
y^0\in\mathcal{C}_T
\quad\Leftrightarrow\quad
\sup_{z\in\mathcal{H}\setminus\{0\}}
\frac{\vert\langle e^{TA}y^0,z\rangle_{\mathcal{H}}\vert}{\Vert L_T^*z\Vert_{U_T}}<+\infty.
\end{equation}
If $y^0\in\mathcal{C}_T$, then
\begin{equation}\label{dualCT}
\Vert y^0\Vert_{\mathcal{C}_T}
=\sup_{z\in\mathcal{H}\setminus\{0\}}\frac{\vert\langle e^{TA}y^0,z\rangle_{\mathcal{H}}\vert}{\Vert L_T^*z\Vert_{U_T}}
=\sup_{z\in\mathcal{H}\setminus\{0\}}\frac{\vert\langle e^{TA}y^0,z\rangle_{\mathcal{H}}\vert}
{\langle G_Tz,z\rangle_{\mathcal{H}}^{1/2}}. 
\end{equation}
\end{lemma}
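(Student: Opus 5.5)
This is the classical Douglas range-inclusion and duality principle, applied to $L_T$ and to $f=e^{TA}y^0\in\mathcal{H}$. By the definition \eqref{CTnorm}, $y^0\in\mathcal{C}_T$ exactly when $f\in\mathcal{R}_T=\Ran L_T$ (the sign is irrelevant), with $\Vert y^0\Vert_{\mathcal{C}_T}=\Vert f\Vert_{\mathcal{R}_T}$. So it suffices to show that $f\in\Ran L_T$ if and only if
$$M(f)=\sup_{z\neq0}\frac{\vert\langle f,z\rangle_{\mathcal{H}}\vert}{\Vert L_T^*z\Vert_{U_T}}<+\infty,$$
and that in this case $\Vert f\Vert_{\mathcal{R}_T}=M(f)$. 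The second expression in \eqref{dualCT} then follows at once from \eqref{gramianobservationidentity}. The assumption $\ker L_T^*=\{0\}$ ensures that the denominators never vanish for $z\neq0$.

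\emph{Necessity and the upper bound $M(f)\leq\Vert f\Vert_{\mathcal{R}_T}$.} If $f=L_Tu$, then for every $z$
$$\vert\langle f,z\rangle_{\mathcal{H}}\vert=\vert\langle u,L_T^*z\rangle_{U_T}\vert\leq\Vert u\Vert_{U_T}\Vert L_T^*z\Vert_{U_T}.$$
Taking the infimum over all admissible $u$ gives $M(f)\leq\Vert f\Vert_{\mathcal{R}_T}$.

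\emph{Sufficiency and the lower bound.} Assume $M(f)<\infty$. On the subspace $\Ran L_T^*\subset U_T$, define the linear functional $\ell(L_T^*z)=\langle z,f\rangle_{\mathcal{H}}$. It is well defined because $L_T^*$ is injective, and it is bounded by $M(f)$. Extend it by continuity to $\overline{\Ran L_T^*}$ and by zero on the orthogonal complement. The Riesz representation theorem then gives $u\in\overline{\Ran L_T^*}=(\ker L_T)^\perp$ with $\Vert u\Vert_{U_T}\leq M(f)$ and $\langle u,L_T^*z\rangle_{U_T}=\langle f,z\rangle_{\mathcal{H}}$ for all $z\in\mathcal{H}$. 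This identity reads $\langle L_Tu-f,z\rangle_{\mathcal{H}}=0$ for all $z$, so $L_Tu=f$. Hence $f\in\mathcal{R}_T$ and $\Vert f\Vert_{\mathcal{R}_T}\leq\Vert u\Vert_{U_T}\leq M(f)$, which combined with the first step gives equality.

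The only delicate point is the extension of $\ell$, that is, checking that it is well defined and bounded on $\Ran L_T^*$. This is exactly where finiteness of $M(f)$ and injectivity of $L_T^*$ are used. One must also make sure the functional is taken over the real field, or else handle conjugates consistently. Everything else is routine.
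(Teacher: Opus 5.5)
Your proof is correct and follows essentially the same route as the paper: Cauchy--Schwarz gives necessity together with $M(f)\leq\Vert f\Vert_{\mathcal{R}_T}$, and sufficiency comes from the Riesz representation of the functional $L_T^*z\mapsto\langle f,z\rangle_{\mathcal{H}}$ on $\overline{\Ran L_T^*}=(\ker L_T)^\perp$. The only variation is in the reverse norm inequality. You obtain it directly from the bound $\Vert u\Vert_{U_T}\leq M(f)$ on the Riesz representative, whereas the paper approximates the minimum-norm control by $L_T^*z_n$ and shows that the quotients converge to its norm; both arguments are valid, and yours is slightly more economical.
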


\begin{proof}
Suppose first that $y^0\in\mathcal{C}_T$, and let $\widehat u\in(\ker L_T)^\perp$ be the minimum-norm control. Since $L_T\widehat u=-e^{TA}y^0$, one has
\begin{equation}\label{controlpairing}
\langle e^{TA}y^0,z\rangle_{\mathcal{H}} = -\langle\widehat u,L_T^*z\rangle_{U_T}
\qquad\forall z\in\mathcal{H}.
\end{equation}
The Cauchy--Schwarz inequality bounds the supremum in \eqref{dualCT} by $\Vert\widehat u\Vert_{U_T}$. If $\widehat u=0$, both sides vanish. If $\widehat u\neq0$, then $\overline{\Ran L_T^*}=(\ker L_T)^\perp$, so there are $z_n\in\mathcal{H}$ such that $L_T^*z_n\to\widehat u$ in $U_T$. For all sufficiently large $n$, one has $\Vert L_T^*z_n\Vert_{U_T}\neq 0$, and substitution in \eqref{controlpairing} shows that the corresponding quotients converge to $\Vert\widehat u\Vert_{U_T}$. This proves \eqref{dualCT}.

Conversely, suppose that the supremum in \eqref{rangecriterionCT} is finite. The map $L_T^*z\mapsto-\langle e^{TA}y^0,z\rangle_{\mathcal{H}}$, well defined because $L_T^*$ is injective, is a bounded linear functional on $\Ran L_T^*$. It extends to $(\ker L_T)^\perp$, and the Riesz representation theorem provides $\widehat u\in(\ker L_T)^\perp$ such that \eqref{controlpairing} holds. Hence $L_T\widehat u=-e^{TA}y^0$, so $y^0\in\mathcal{C}_T$; orthogonality to $\ker L_T$ makes $\widehat u$ the minimum-norm control.
\end{proof}

\begin{remark}[HUM realization]
Since $\mathcal{R}_T'$ is the completion of $\mathcal{H}$ for the norm $z\mapsto\Vert L_T^*z\Vert_{U_T}$, the operator $L_T^*$ extends uniquely to an onto isometry $\widetilde L_T^*:\mathcal{R}_T'\rightarrow(\ker L_T)^\perp$. The extended Gramian $\widetilde G_T=L_T\widetilde L_T^*:\mathcal{R}_T'\rightarrow\mathcal{R}_T$ is an onto isometry extending $G_T$ and satisfies
$\langle\widetilde G_Tz,w\rangle_{\mathcal{R}_T,\mathcal{R}_T'}=\langle\widetilde L_T^*z,\widetilde L_T^*w\rangle_{U_T}$ for all $z,w\in\mathcal{R}_T'$.

Let $y^0\in\mathcal{C}_T$. There is a unique $\widehat z\in\mathcal{R}_T'$ such that $\widetilde G_T\widehat z=-e^{TA}y^0$ in $\mathcal{R}_T$. Equivalently, $\langle\widetilde L_T^*\widehat z,L_T^*w\rangle_{U_T}
=-\langle e^{TA}y^0,w\rangle_{\mathcal{H}}$ for every $w\in\mathcal{H}$.
The vector $\widehat z$ is the unique minimizer on $\mathcal{R}_T'$ of the HUM functional $J_{T,y^0}(z)=\frac{1}{2}\Vert z\Vert_{\mathcal{R}_T'}^2+\langle e^{TA}y^0,z\rangle_{\mathcal{R}_T,\mathcal{R}_T'}$. The control $\widehat u=\widetilde L_T^*\widehat z$ is the unique minimum-norm null control, and
$$
\Vert y^0\Vert_{\mathcal{C}_T}
=\Vert\widehat u\Vert_{U_T}
=\Vert\widetilde L_T^*\widehat z\Vert_{U_T}
=\Vert\widehat z\Vert_{\mathcal{R}_T'}
=\big\langle\widetilde G_T\widehat z,\widehat z\big\rangle_{\mathcal{R}_T,\mathcal{R}_T'}^{1/2}.
$$
If $y^0\neq0$, the quotient in \eqref{dualCT}, after extension to $\mathcal{R}_T'$, is attained at $\widehat z$; it need not be attained in $\mathcal{H}$. For the HUM and Gramian constructions, see \cite[Sections~5.2.2--5.2.3]{Trelat_book2024}.
\end{remark}

\subsection{The spectral scale and the main theorem}\label{scaletheorem2}
Given any $\tau\in\R$, let $\mathcal{V}_\tau$ be the Hilbert space of all formal series $y=\sum_{k\geq1}(c_{1,k}\Phi_{1,k}+c_{2,k}\Phi_{2,k})$ such that
$$
\Vert y\Vert_{\mathcal{V}_\tau}^2=\sum_{k\geq1}e^{-2\tau k^2}\left(\frac{\vert c_{1,k}\vert^2}{\rho_k^2}+\vert c_{2,k}\vert^2\right)<+\infty.
$$
The scale is increasing in $\tau$. In the definition of the norm, the weight $e^{-2\tau k^2}$ records heat dissipation, while $\rho_k^{-2}$ compensates for the weak observation of the first direction in the $k$-th double eigenspace. For every $\tau\in\R$, the embedding $\mathcal{V}_\tau\hookrightarrow W_{-\tau}$ is continuous and dense: continuity follows from the uniform upper bound on $\rho_k$, and density follows from the fact that, since $\rho_k>0$ for every $k$, $\mathcal{V}_\tau$ contains all finite spectral sums. In particular, $\mathcal{V}_T\hookrightarrow W_{-T}$ continuously and densely.

\begin{theorem}\label{main}
Assume \eqref{FH}. There exists $\kappa_2>0$ such that, for every $T>0$, there exist $c_T>0$ and $C_T>0$ such that
\begin{equation}\label{refinedbounds}
c_T\sum_{k\geq1}k e^{-2Tk^2}\left(\frac{\vert c_{1,k}\vert^2}{\rho_k^2}+\vert c_{2,k}\vert^2\right)
\leq \Vert y\Vert_{\mathcal{C}_T}^2
\leq C_T\sum_{k\geq1}e^{-2Tk^2+\kappa_2k}\left(\frac{\vert c_{1,k}\vert^2}{\rho_k^2}+\vert c_{2,k}\vert^2\right)
\end{equation}
for every finite spectral sum $y=\sum_k(c_{1,k}\Phi_{1,k}+c_{2,k}\Phi_{2,k})$. The right-hand estimate extends to every series for which its right-hand side is finite, and the left-hand estimate holds for every $y\in\mathcal{C}_T$. 
Consequently, for every $\varepsilon>0$, one has the continuous inclusions
$$
\mathcal{V}_{T-\varepsilon}\hookrightarrow\mathcal{C}_T\hookrightarrow\mathcal{V}_T.
$$
\end{theorem}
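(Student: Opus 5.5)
We reduce both inequalities in \eqref{refinedbounds} to a block moment problem and then invoke the two-sided moment estimate of Proposition~\ref{twosidedmoment} with $N=2$. By the pairing \eqref{controlpairing} with the biorthogonal basis of Lemma~\ref{lem_Jordan}, a control $u\in U_T$ satisfies $e^{TA}y+L_Tu=0$ if and only if $\langle e^{TA}y,\Psi_{j,k}\rangle=-\langle u,L_T^*\Psi_{j,k}\rangle_{U_T}$ for all $j,k$. By \eqref{adjointblocks}, this becomes a family of block moment conditions. Writing $s=T-t$, the conditions read
$$
\int_0^T e^{-k^2s}\big\langle u(T-s),\,\theta_k+sI_k\varphi_k\big\rangle_{L^2(\omega)}\,ds=-e^{-k^2T}(c_{1,k}+\cdots),
\qquad
\int_0^T e^{-k^2s}\big\langle u(T-s),\varphi_k\big\rangle_{L^2(\omega)}\,ds=-e^{-k^2T}(c_{2,k}+\cdots).
$$
The dots denote the triangular correction from \eqref{semigroupblocks}, namely $TI_kc_{2,k}$ in the first line. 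Since $|I_k|\leq\rho_k/\sqrt{m_\omega}$, this correction is harmless in the weighted norm: the map $(c_{1,k},c_{2,k})\mapsto(c_{1,k}+TI_kc_{2,k},c_{2,k})$ is uniformly bounded, with bounded inverse, for the norm $|c_1|^2/\rho_k^2+|c_2|^2$. Next, decompose the observed functions along the orthogonal pair $\theta_{k|\omega}$, $\varphi_{k|\omega}$ of \eqref{observedbasis}. The observed polynomial on the $k$-th block is then $e^{-k^2s}(P_k^0+sP_k^1)$, with coefficient matrix
$$
M_k=\begin{pmatrix}\theta_{k|\omega} & I_k\varphi_{k|\omega}\\ \varphi_{k|\omega} & 0\end{pmatrix}.
$$
Its rows have norms $\rho_k$ and $\sqrt{\alpha_k}\in[\sqrt{m_\omega},1]$, and they are orthogonal.

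For the upper bound (sufficiency), we use the biorthogonal family supplied by Proposition~\ref{twosidedmoment}. This is a family $(q_{k,m})$ in $L^2(0,T)$ with $\int_0^T q_{k,m}(s)s^{m'}e^{-j^2s}ds=\delta_{kj}\delta_{mm'}$ and $\Vert q_{k,m}\Vert\leq C_Te^{\kappa_2 k}$. The $e^{\kappa_2k}$ form of the bound, rather than $e^{\varepsilon k^2}$, is what I expect the proposition to give: Müntz-type estimates typically yield $e^{Ck}$ for the sequence $k^2$. We seek $u(T-s)=\sum_k\sum_m q_{k,m}(s)g_{k,m}$ with $g_{k,m}\in L^2(\omega)$. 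The moment equations then reduce to $M_k$-type finite-dimensional equations, solved by the minimal-norm choice $g_{k,0}=\gamma_{k}\theta_k/\Vert\theta_k\Vert^2$ plus a $\varphi_k$ component, and so on. More precisely, we solve $\langle g_{k,0},\theta_k\rangle+I_k\langle g_{k,1},\varphi_k\rangle=a_1$ and $\langle g_{k,0},\varphi_k\rangle=a_2$ with minimal norm. The pseudo-inverse of the coefficient map has norm $\lesssim \rho_k^{-1}$ on the first entry and $\lesssim 1$ on the second. This gives $\Vert u\Vert^2\leq C\sum_k e^{2\kappa_2k}e^{-2Tk^2}(|c_{1,k}|^2/\rho_k^2+|c_{2,k}|^2)$ after Cauchy--Schwarz; the square summation costs at most another $e^{\kappa k}$ factor, which we absorb into $\kappa_2$. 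The estimate then extends to infinite series by density and closedness of $\mathcal{C}_T$ in $W_{-T}$.

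For the lower bound (necessity), we use Lemma~\ref{rangeHUM} with test vectors $z=\Psi_k a$ supported on a single block. Then $|\langle e^{TA}y,z\rangle|$ is computed exactly from the coefficients. Meanwhile $\Vert L_T^*z\Vert^2=\int_0^T e^{-2k^2s}\Vert (a_1\theta_k+a_2\varphi_k)+sa_1I_k\varphi_k\Vert^2_{L^2(\omega)}ds$, which is bounded by $C k^{-1}|M_k^*a|^2$ from $\int_0^T s^{2m}e^{-2k^2s}ds\lesssim k^{-2-4m}$. This is the single-frequency sharper bound of Proposition~\ref{twosidedmoment}. Optimizing over $a$ gives, for each $k$, $\Vert y\Vert_{\mathcal{C}_T}^2\geq c\,k\,e^{-2Tk^2}(|c_{1,k}|^2/\rho_k^2+|c_{2,k}|^2)$. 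This is only a supremum over single blocks, whereas \eqref{refinedbounds} requires the sum. To obtain the sum, we take $z=\sum_k\Psi_ka_k$ and use the Hilbert--Schur upper bound $\Vert L_T^*z\Vert^2\leq C\sum_k k^{-1}|M_k^*a_k|^2$. This holds because the Gram matrix of $(s^me^{-k^2s})$ against weights $k^{1/2}$ is Schur bounded, as in the Hilbert matrix $1/(k^2+j^2)$. Duality then yields the summed lower bound.

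The main obstacle is the upper estimate. One must make sure that the anisotropic pseudo-inverse of $M_k$ really costs only $\rho_k^{-1}$ on the first coordinate and $O(1)$ on the second, uniformly in $k$, even when $I_k$ and $\theta_k$ are both tiny. One must also check that the biorthogonal costs for the $t^m$ family with $m=0,1$ grow at most like $e^{\kappa_2k}$. Finally, the inclusions follow: $e^{\kappa_2k-2Tk^2}\leq C_\varepsilon e^{-2(T-\varepsilon)k^2}$ gives $\mathcal{V}_{T-\varepsilon}\hookrightarrow\mathcal{C}_T$, and $k\geq1$ gives $\mathcal{C}_T\hookrightarrow\mathcal{V}_T$.
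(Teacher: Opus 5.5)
Your proof is correct. For the lower bound it is the paper's argument: the Hilbert--Schur upper bound on the Gramian over many blocks, followed by duality through Lemma \ref{rangeHUM}.

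\textbf{Upper bound: a different route.} You work on the primal side. You build an explicit control $u(T-s)=\sum_{k,m}q_{k,m}(s)g_{k,m}$ from the biorthogonal family and solve each block by a minimal-norm choice. Its cost is exactly $\vert b_1\vert^2/\rho_k^2+\vert b_2\vert^2/\alpha_k$, because the Gram matrix of your coefficient map is $\mathrm{diag}(\rho_k^2,\alpha_k)$ by the orthogonality \eqref{observedbasis}. So the ``main obstacle'' you flag is not one, as long as $\rho_k>0$.

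The paper works on the dual side instead. It packages the same biorthogonal family, with the bound \eqref{biorthbound}, into the lower Gramian estimate in \eqref{gramiancomparison2}. It then applies Cauchy--Schwarz to the numerator \eqref{numerator2} and concludes with Lemma \ref{rangeHUM}. The two arguments are transposes of each other.

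\textbf{What each route buys.} Yours yields an explicit, non-minimal control. It also makes the extension to infinite series immediate: the control series converges absolutely in $U_T$ whenever the right-hand side of \eqref{refinedbounds} is finite. The paper's route uses one two-sided Gramian comparison for both inequalities and keeps the biorthogonal family confined to the appendix. That is what lets it carry over verbatim to the non-diagonal $\Gamma_k$ of Section \ref{finitecascades}. Your per-block pseudo-inverse would also generalize there, with cost $\Vert\Gamma_k^{-1/2}b\Vert^2$.

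\textbf{Two small corrections.} First, $\mathcal{C}_T$ is not closed in $W_{-T}$. Under \eqref{FH} it is dense there, and it is a proper subspace. Your density step actually needs that $\mathcal{C}_T$ is complete and continuously embedded in $W_{-T}$. Then truncations, which are Cauchy in $\mathcal{C}_T$ by the finite-sum bound applied to differences, converge in $\mathcal{C}_T$ to their $W_{-T}$-limit. Alternatively, simply invoke the absolute convergence of your control series. Second, the single-block Gramian bound is $Ck^{-2}$, not $Ck^{-1}$; this slip is harmless.
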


The constant $\kappa_2$ is the one provided by Proposition \ref{twosidedmoment} of Appendix \ref{appendixmoments} for $N=2$: it comes from the biorthogonal family associated with the spectrum $\{k^2\mid k\geq1\}$ and the generalized exponentials $se^{-k^2s}$, and does not depend on $T$, $\beta$ or $\omega$.

Theorem \ref{main} (proved in Section \ref{proofmain}) gives an explicit spectral description of the exact space $\mathcal{C}_T$, almost sharp at the Gaussian scale. Indeed, the two estimates in \eqref{refinedbounds} contain the same factors $e^{-2Tk^2}$ and $\rho_k^{-2}$. Their only discrepancy is between the polynomial factor $k$ in the necessary estimate and the linear exponential $e^{\kappa_2k}$ in the sufficient one. Since $k\leq C_{\varepsilon}e^{\varepsilon k^2}$ and $e^{\kappa_2k}\leq C_{\varepsilon}e^{2\varepsilon k^2}$, these factors do not change the leading coefficient of $k^2$ in the exponent. This is why the sufficient condition uses $\mathcal{V}_{T-\varepsilon}$, whereas the necessary condition gives $\mathcal{C}_T\hookrightarrow\mathcal{V}_T$.

\begin{remark}[A sharper one-block estimate]\label{oneblockestimate}
For data supported in a single spectral block, the factor $k$ in the lower bound \eqref{refinedbounds} improves to $k^2$. More precisely, for every $T > 0$, there exists $c_T' > 0$, independent of $k$, such that
\begin{equation}\label{singleblocknecessary}
\Vert c_1\Phi_{1,k} + c_2\Phi_{2,k}\Vert_{\mathcal{C}_T}^2 \geq c_T' k^2 e^{-2Tk^2}\left(\frac{\vert c_1\vert^2}{\rho_k^2} + \vert c_2\vert^2\right)
\end{equation}
for all $k \geq 1$ and $(c_1,c_2) \in \R^2$.

Indeed, for $z_k = a_1\Psi_{1,k} + a_2\Psi_{2,k}$, apply \eqref{singlemoment} from Proposition \ref{twosidedmoment}, with $N = 2$, $X = L^2(\omega)$, $A_0 = (a_1\theta_k + a_2\varphi_k)_{\vert\omega}$ and $A_1 = (I_k a_1\varphi_k)_{\vert\omega}$. By \eqref{observedbasis} and \eqref{rhok}, this gives
$\langle G_T z_k,z_k\rangle_{\mathcal{H}} \leq \frac{C}{k^2}\big(\rho_k^2\vert a_1\vert^2 + \alpha_k\vert a_2\vert^2\big)$.
Restricting \eqref{dualCT} to these test vectors and optimizing over $(a_1,a_2)$ yields \eqref{singleblocknecessary}, by biorthogonality, \eqref{semigroupblocks}, and the weighted norm equivalence \eqref{horizonequiv} proved below; the constants $C$ and $\kappa_T$ involved there do not depend on $k$. Summing \eqref{singleblocknecessary} over $k$ does not improve the lower bound in \eqref{refinedbounds}, because the spectral blocks are not orthogonal in $\mathcal{C}_T$: the squared norm of a sum is not the sum of the squared norms of its blocks. The weaker factor $k$ in \eqref{refinedbounds} is the price of the interaction between distinct frequencies in the Gramian, which only the upper bound in \eqref{twosidedmomentineq} controls. This estimate will be used to prove noninclusions at subcritical times.
\end{remark}

\subsection{Comparison with the energy space and its critical time}

\begin{proposition}\label{Hcomparison}
Let $\tau\in\R$, and let $T^\star\in[0,+\infty]$ be defined by \eqref{Tstarintro}.
\begin{enumerate}
\item If $\tau<0$, then $\mathcal{V}_\tau\hookrightarrow\mathcal{H}$ continuously and properly.
\item One has $\mathcal{V}_0\hookrightarrow\mathcal{H}$. Moreover, $\mathcal{H} = \mathcal{V}_0$ with equivalent norms if and only if $\inf_k\rho_k>0$.
\item If $\tau>0$, then $\mathcal{V}_\tau$ is never contained in $\mathcal{H}$, and
\begin{equation}\label{HintoV}
\mathcal{H}\hookrightarrow\mathcal{V}_\tau\ \Leftrightarrow\ \sup_{k \geq 1}\frac{e^{-\tau k^2}}{\rho_k}< +\infty.
\end{equation}
Consequently, $\mathcal{H}$ is properly contained in $\mathcal{V}_\tau$ if $\tau>T^\star$, while $\mathcal{H}$ and $\mathcal{V}_\tau$ are incomparable if $0<\tau<T^\star$. When $0<T^\star<+\infty$ and $\tau=T^\star$, the answer depends on the subquadratic behavior of $\rho_k$.
\end{enumerate}
For the exact null-controllability space, $\mathcal{C}_T$ is not contained in $\mathcal{H}$ for any $T>0$. If $T>T^\star$, then $\mathcal{H}\hookrightarrow\mathcal{C}_T$ continuously and properly. If $T<T^\star$, then $\mathcal{H}$ and $\mathcal{C}_T$ are incomparable.
\end{proposition}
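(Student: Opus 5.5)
The whole argument takes place in the adapted Riesz basis of Lemma \ref{lem_Jordan}. By that lemma, $\Vert y\Vert_{\mathcal{H}}^2\asymp\sum_k(\vert c_{1,k}\vert^2+\vert c_{2,k}\vert^2)$. Each $\mathcal{V}_\tau$ is a weighted $\ell^2$ space with weights $w_{1,k}=e^{-2\tau k^2}\rho_k^{-2}$ and $w_{2,k}=e^{-2\tau k^2}$, and $\mathcal{H}$ has unit weights. Inclusions between such diagonal weighted spaces are governed by the elementary rule
$$
\ell^2(w)\hookrightarrow\ell^2(w') \iff \sup w'/w<\infty,
$$
and the inclusion is proper when $\sup w/w'=\infty$. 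I will apply this rule componentwise, using the bound $\rho_k\leq C$.

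For item 1 ($\tau<0$), the ratios of $\mathcal{H}$-weights to $\mathcal{V}_\tau$-weights are $\rho_k^2e^{2\tau k^2}\leq C^2$ and $e^{2\tau k^2}\leq1$, so $\mathcal{V}_\tau\hookrightarrow\mathcal{H}$. Properness comes from the second component: since $e^{-2\tau k^2}\to\infty$, the series $\sum k^{-1}\Phi_{2,k}$ lies in $\mathcal{H}$ but not in $\mathcal{V}_\tau$.

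For item 2 ($\tau=0$), the same bounds give $\mathcal{V}_0\hookrightarrow\mathcal{H}$. The reverse inclusion holds iff $\sup_k\rho_k^{-2}<\infty$, that is, iff $\inf_k\rho_k>0$.

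For item 3 ($\tau>0$), inclusion into $\mathcal{H}$ fails already on the second component: $e^{\tau k^2}k^{-1}\Phi_{2,k}$ summed over $k$ gives an element of $\mathcal{V}_\tau$ that is not in $\mathcal{H}$. The equivalence \eqref{HintoV} follows from the rule above, because the second-component weight $e^{-2\tau k^2}$ is bounded. Now compare with $T^\star$. If $\tau>T^\star$, pick $\tau'\in(T^\star,\tau)$; then $-\log\rho_k\leq\tau'k^2$ for large $k$, so $e^{-\tau k^2}/\rho_k\leq e^{-(\tau-\tau')k^2}$ is bounded. Properness is already known since $\mathcal{V}_\tau\not\subset\mathcal{H}$. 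If $0<\tau<T^\star$, then along a subsequence $\rho_k\leq e^{-\tau'k^2}$ with $\tau'>\tau$, so the supremum in \eqref{HintoV} is infinite; combined with $\mathcal{V}_\tau\not\subset\mathcal{H}$, the two spaces are incomparable. For the borderline case $\tau=T^\star$, I would give two choices of $\rho_k$ to show the answer depends on subquadratic behavior, namely $\rho_k=e^{-T^\star k^2}$ and $\rho_k=k^{-1}e^{-T^\star k^2}$. If only the abstract definition is available, I would simply note that finiteness of the supremum is not determined by the $\limsup$.

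For $\mathcal{C}_T$ itself I transfer these results through Theorem \ref{main}.
\begin{itemize}
\item $\mathcal{C}_T\not\subset\mathcal{H}$: since $\mathcal{V}_{T/2}\hookrightarrow\mathcal{C}_T$ and $\mathcal{V}_{T/2}\not\subset\mathcal{H}$ by item 3.
\item $T>T^\star$: choose $\varepsilon$ with $T-\varepsilon>T^\star$. Then $\mathcal{H}\hookrightarrow\mathcal{V}_{T-\varepsilon}\hookrightarrow\mathcal{C}_T$, and this inclusion is proper by the previous point.
\item $T<T^\star$: if $\mathcal{H}\subset\mathcal{C}_T$, the closed graph theorem (both spaces embed continuously in $W_{-T}$) gives a continuous inclusion, hence $\mathcal{H}\hookrightarrow\mathcal{V}_T$ by Theorem \ref{main}, which contradicts item 3 with $\tau=T$. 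So $\mathcal{H}\not\subset\mathcal{C}_T$. Together with $\mathcal{C}_T\not\subset\mathcal{H}$, this gives incomparability.
\end{itemize}
Alternatively, one can exhibit a concrete datum directly: take $\Phi_{1,k}$ along the subsequence and apply \eqref{singleblocknecessary}.

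The only delicate step is the case $T<T^\star$. There I must make sure that a set inclusion upgrades to a continuous one (closed graph in $W_{-T}$), or else use the datum $\sum_j j^{-1}\Phi_{1,k_j}\in\mathcal{H}$ with the necessary bound from \eqref{refinedbounds}, which holds for all $y\in\mathcal{C}_T$. With $\rho_{k_j}\leq e^{-\tau'k_j^2}$ and $\tau'>T$, the left side of \eqref{refinedbounds} diverges. This direct route avoids the closed-graph argument entirely, and it is the one I would write.
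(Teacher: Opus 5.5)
Your proof is correct. Items 1--3, the non-inclusion $\mathcal{C}_T\not\subset\mathcal{H}$ and the case $T>T^\star$ follow the paper's argument: diagonal weight comparison in the Riesz basis, then the sandwich of Theorem \ref{main}. Where you pass through $\mathcal{V}_{T/2}\not\subset\mathcal{H}$, the paper exhibits the explicit datum $c_{2,k}=e^{Tk^2-\kappa_2k}/k$; this is the same idea.

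The genuine difference is the case $T<T^\star$. The paper first upgrades $\mathcal{H}\subset\mathcal{C}_T$ to a continuous inclusion by the closed graph theorem. It then tests this inclusion on the single vectors $\Phi_{1,k}$ with the one-block estimate \eqref{singleblocknecessary}, which yields $\rho_k\geq ck\,e^{-Tk^2}$.

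Your preferred route instead builds one explicit element $\sum_j j^{-1}\Phi_{1,k_j}$ of $\mathcal{H}\setminus\mathcal{C}_T$ from the resonant subsequence. It uses the global necessary bound in \eqref{refinedbounds}, which Theorem \ref{main} states for every $y\in\mathcal{C}_T$. This needs no closed-graph step and produces a concrete datum that is not null controllable. The paper's route gives a sharper quantitative bound (factor $k^2$ instead of $k$). That extra factor is irrelevant for strict $T<T^\star$, but the paper reuses it in the borderline discussion at $T=T^\star$.

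Your aside ``take $\Phi_{1,k}$ along the subsequence and apply \eqref{singleblocknecessary}'' does not by itself give a concrete datum. Each $\Phi_{1,k}$ belongs to $\mathcal{C}_T$. Since the blocks are not orthogonal in $\mathcal{C}_T$, the one-block estimates cannot be summed over $k$. So that variant genuinely requires continuity of the inclusion, which is exactly the paper's argument.

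Finally, your two model sequences at $\tau=T^\star$ only show that \eqref{HintoV} is not decided by the $\limsup$. You do not show that they come from an actual coupling $\beta$, but the paper claims no more than this.
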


When $0<T^\star<+\infty$, nothing is asserted at $T=T^\star$: the value of $T^\star$ alone does not determine whether $\mathcal{H}\hookrightarrow\mathcal{C}_{T^\star}$. By \eqref{refinedbounds}, this inclusion holds if $\rho_k\geq c\,e^{-T^\star k^2+\kappa_2k/2}$ for every $k\geq1$; by \eqref{singleblocknecessary}, it fails if $\rho_k\leq Ce^{-T^\star k^2}$ along a subsequence. Between these two regimes, which differ by subquadratic factors only, Theorem \ref{main} gives no answer.

\begin{proof}
The Riesz basis identifies $\mathcal{H}$ with unweighted $\ell^2(\R^2)$. The first two items and \eqref{HintoV} follow by comparing the diagonal weights $e^{-2\tau k^2}\rho_k^{-2}$ and $e^{-2\tau k^2}$ with $1$. For $\tau<0$, strictness follows by taking $c_{1,k}=0$ and $c_{2,k}=1/k$, which defines an element of $\mathcal{H}\setminus\mathcal{V}_\tau$. If $\tau>0$, the datum $c_{1,k} = 0$, $c_{2,k} = e^{\tau k^2}/k$ belongs to $\mathcal{V}_\tau$ but not to $\mathcal{H}$. Formula \eqref{Tstarintro} gives the strict alternatives around $T^\star$.

The datum defined by $c_{1,k}=0$ and
$c_{2,k}=e^{Tk^2-\kappa_2k}/k$ makes the rightmost series in \eqref{refinedbounds} finite, but does not belong to $\mathcal{H}$. Hence $\mathcal{C}_T$ is never contained in $\mathcal{H}$. If $T>T^\star$, choose $\tau$ with $T^\star<\tau<T$; then $\mathcal{H}\hookrightarrow\mathcal{V}_\tau\hookrightarrow\mathcal{C}_T$. Suppose now that $T<T^\star$ and that $\mathcal{H}\subset\mathcal{C}_T$ as sets. The inclusion has a closed graph because both spaces embed continuously in $W_{-T}$, and is therefore continuous. Applying \eqref{singleblocknecessary} with $(c_1,c_2)=(1,0)$ would give $\rho_k\geq ck e^{-Tk^2}$, and hence $T^\star\leq T$. Thus $\mathcal{H}$ is not contained in $\mathcal{C}_T$.
\end{proof}

In the notation of the introduction, Proposition \ref{Hcomparison} gives $T_{\mathcal{H}}=T^\star$, i.e., $T^\star$ is the transition threshold at which the entire energy space enters the increasing family $(\mathcal{C}_T)_{T>0}$.

This is consistent with the datum-wise formulas of \cite{BBM_AHL2020,BoyerMorancey_CRM2023}. Indeed, set $T_{\min}(y)=\inf\{T>0\, \mid\, y\in\mathcal{C}_T\}$, the minimal null-control time of a formal series $y=\sum_k(c_{1,k}\Phi_{1,k}+c_{2,k}\Phi_{2,k})$. Theorem \ref{main} gives
$$
T_{\min}(y)=\max\Big(0,\ \limsup_{k\to +\infty}\frac{1}{2k^2}\log\Big(\frac{\vert c_{1,k}\vert^2}{\rho_k^2}+\vert c_{2,k}\vert^2\Big)\Big),
$$
with the convention $\log0 = -\infty$. Indeed, if $T$ exceeds the right-hand side, the sufficient series in \eqref{refinedbounds} converges, since $e^{\kappa_2k}$ is negligible with respect to any Gaussian gain; if $T$ is smaller than the right-hand side, the general term of the necessary series in \eqref{refinedbounds} is unbounded along a subsequence.
Moreover,
\begin{equation*}
T^\star=T_{\mathcal{H}}=\sup_{y\in\mathcal{H}}T_{\min}(y).
\end{equation*}
Indeed, $\mathcal{H}\hookrightarrow\mathcal{C}_T$ for every $T>T^\star$, so $\sup_{y\in\mathcal{H}}T_{\min}(y)\leq T^\star$. Conversely, if this supremum were smaller than $T^\star$, one could choose $T$ strictly between them. Monotonicity of $(\mathcal{C}_T)_{T>0}$ would then give $\mathcal{H}\subset\mathcal{C}_T$; the closed graph theorem would make this inclusion continuous, contradicting $T_{\mathcal{H}}=T^\star$. Thus the known datum-wise theory recovers $T^\star$ by taking the worst initial datum, while the present result describes the whole Hilbert space $\mathcal{C}_T$ at each fixed time.

For $T<T^\star$, the null-controllability space is not simply a smaller subspace of $L^2$: it contains singular data in the directly controlled direction while excluding some square-integrable data in the weakly observed direction. This explains the incomparability in Proposition \ref{Hcomparison}. Remark \ref{nonstandard} in Subsection \ref{sec_3examples} proves, for the arithmetic example below, incomparability with every isotropic spectral Sobolev or sub-Gaussian Gevrey space defined in the adapted basis. Crossing $T^\star$ therefore changes the nature of the controllable-data space, and not only its size.

\subsection{Comparison with the heat scale}
The weights defining $W_\sigma$ in \eqref{Wsigma} do not involve $\rho_k$, although the adapted basis depends on the coupling. For $\sigma\geq0$, this scale has the semigroup interpretation \eqref{semigrouprange}.

\begin{corollary}\label{Wphase}
Assume \eqref{FH}, let $T>0$, let $\sigma \geq -T$, and let $T^\star$ be defined by \eqref{Tstarintro}. If $T + \sigma>T^\star$, then $W_\sigma\hookrightarrow\mathcal{C}_T$ continuously. If $T + \sigma<T^\star$, then $W_\sigma$ is not contained in $\mathcal{C}_T$. No general conclusion is asserted when $T + \sigma = T^\star$.
\end{corollary}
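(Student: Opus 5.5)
The plan is to reduce both assertions to a comparison of diagonal weights in the adapted Riesz basis, using Theorem \ref{main} for the positive inclusion and the one-block estimate \eqref{singleblocknecessary} for the negative one, exactly as in the proof of Proposition \ref{Hcomparison}.

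For the sufficient direction, assume $T+\sigma>T^\star$ and pick $\varepsilon>0$ with $T-\varepsilon+\sigma>T^\star$. I will check that $W_\sigma\hookrightarrow\mathcal{V}_{T-\varepsilon}$. Comparing the weights coefficientwise, this amounts to
$$
\sup_{k\geq1} e^{-2(T-\varepsilon)k^2-2\sigma k^2}\max\big(\rho_k^{-2},1\big)<+\infty .
$$
The factor with $1$ is bounded because $T-\varepsilon+\sigma>T^\star\geq0$. For the factor $\rho_k^{-2}$, fix $\tau'$ with $T^\star<\tau'<T-\varepsilon+\sigma$. By the definition \eqref{Tstarintro} of $T^\star$, one has $\rho_k\geq e^{-\tau' k^2}$ for all large $k$. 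This gives the bound $e^{-2(T-\varepsilon+\sigma-\tau')k^2}\to0$, while the finitely many remaining $k$ are handled by \eqref{FH}. Composing with $\mathcal{V}_{T-\varepsilon}\hookrightarrow\mathcal{C}_T$ from Theorem \ref{main} gives the continuous inclusion. The same argument shows that if $T^\star=+\infty$ the hypothesis is void, so nothing more is needed there.

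For the necessary direction, assume $T+\sigma<T^\star$ and suppose, for contradiction, that $W_\sigma\subset\mathcal{C}_T$ as sets. Since $\sigma\geq -T$, both $W_\sigma$ and $\mathcal{C}_T$ embed continuously in $W_{-T}$. The inclusion therefore has a closed graph, and it is continuous by the closed graph theorem. Test it on $y=\Phi_{1,k}$, whose $W_\sigma$ norm is $e^{\sigma k^2}$. Estimate \eqref{singleblocknecessary} gives
$$
c_T' k^2 e^{-2Tk^2}\rho_k^{-2}\leq C e^{2\sigma k^2},
$$
that is, $\rho_k\geq c\,k\,e^{-(T+\sigma)k^2}$ for all $k$. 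Hence $\limsup_k(-\log\rho_k)/k^2\leq T+\sigma<T^\star$, which is a contradiction. The case $T^\star=+\infty$ is included.

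I see no real obstacle here. The only delicate point is justifying the continuity of the set inclusion, and this relies on the restriction $\sigma\geq-T$, which guarantees a common ambient space $W_{-T}$ in which both norms are continuous. For $\sigma=-T$ the first case requires $T^\star<0$ and so never occurs, which is consistent. The borderline $T+\sigma=T^\star$ is left open, since it depends on subquadratic behavior of $\rho_k$, as already noted after Proposition \ref{Hcomparison}.
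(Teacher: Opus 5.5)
Your proof is correct and follows essentially the same route as the paper. For the positive part you show $W_\sigma\hookrightarrow\mathcal{V}_{T-\varepsilon}$ from the definition of $T^\star$ and then apply Theorem \ref{main}. For the negative part you make the set inclusion continuous by the closed graph theorem in $W_{-T}$ and test the one-block estimate \eqref{singleblocknecessary} on $\Phi_{1,k}$. The only difference is that you write out the coefficientwise weight comparison, which the paper leaves implicit.
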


\begin{proof}
If $T + \sigma>T^\star$, choose $\tau<T$ such that $\tau + \sigma>T^\star$. The definition of $T^\star$ gives $W_\sigma\hookrightarrow\mathcal{V}_\tau$, and Theorem \ref{main} gives $\mathcal{V}_\tau\hookrightarrow\mathcal{C}_T$. Conversely, suppose that $W_\sigma\subset\mathcal{C}_T$ as sets. Since $\sigma\geq-T$, both spaces embed continuously in $W_{-T}$; the closed graph theorem makes the inclusion continuous. Applying \eqref{singleblocknecessary} to $\Phi_{1,k}$ gives $\rho_k\geq ck e^{-(T+\sigma)k^2}$, hence $T^\star\leq T+\sigma$.
\end{proof}

At $\sigma = 0$, Corollary \ref{Wphase} recovers the $L^2$ minimal-time result of \cite{AKBGBdT_JMAA2016}. For $\sigma>0$, the data have undergone the coupled heat flow for time $\sigma$; for $-T \leq \sigma<0$, they are extrapolated data whose free evolution reaches $\mathcal{H}$ no later than the final time. The line $T+\sigma=T^\star$ therefore describes the trade-off between prior heat regularization and control time at the quadratic exponential scale. It does not replace Theorem \ref{main}: two couplings with the same $T^\star$ may have completely different sequences $\rho_k$ and hence different fixed-horizon spaces.

\subsection{Proof of Theorem \ref{main}}\label{proofmain}
By Lemma \ref{rangeHUM}, $\Vert y^0\Vert_{\mathcal{C}_T}$ is the supremum, over $z$, of the quotient of the pairing $\langle e^{TA}y^0,z\rangle_{\mathcal{H}}$ by $\langle G_Tz,z\rangle_{\mathcal{H}}^{1/2}$. The proof has three steps. Writing the adjoint observation in block coordinates turns $\langle G_Tz,z\rangle_{\mathcal{H}}$ into the integral of \eqref{twosidedmomentineq} with $N=2$, whose two bounds frame it by the weighted sums \eqref{gramiancomparison2}. The pairing is then computed in the same coordinates, the triangular correction due to the horizon $T$ being absorbed by \eqref{horizonequiv}. Finally, the upper bound in \eqref{refinedbounds} follows from the Cauchy--Schwarz inequality together with the lower bound in \eqref{gramiancomparison2}, applied to every test vector $z$, whereas the lower bound in \eqref{refinedbounds} follows from the upper bound in \eqref{gramiancomparison2}, applied to one well-chosen $z$.

\paragraph{The adjoint observation in block coordinates.}

By \eqref{Jordanrelations}, the restrictions of $A$ and $A^*$ to their generalized eigenspaces at $-k^2$ have, respectively, the following matrices in the ordered bases $(\Phi_{1,k},\Phi_{2,k})$ and $(\Psi_{1,k},\Psi_{2,k})$:
$$
[A]_{\Phi_k}=\begin{pmatrix}-k^2&I_k\\0&-k^2\end{pmatrix},\qquad [A^*]_{\Psi_k}=\begin{pmatrix}-k^2&0\\I_k&-k^2\end{pmatrix}.
$$
Their exponentials give the semigroup formulas \eqref{semigroupblocks}--\eqref{adjointblocks}.

Let $a=(a_k)_{k\geq1}$ be finitely supported, with $a_k=(a_{1,k},a_{2,k})\in\R^2$, and set
$$
z[a]=\sum_{k\geq1}\Psi_ka_k=\sum_{k\geq1}(a_{1,k}\Psi_{1,k}+a_{2,k}\Psi_{2,k}).
$$
Using \eqref{observedbasis} and \eqref{adjointblocks}, we obtain
\begin{equation}\label{globalobservation2}
B^*e^{sA^*}z[a]=\sum_{k\geq1}e^{-k^2s}(A_{0,k}+sA_{1,k}),
\end{equation}
where $A_{0,k}=(a_{1,k}\theta_k+a_{2,k}\varphi_k)_{\vert\omega}$ and $A_{1,k}=(I_ka_{1,k}\varphi_k)_{\vert\omega}$.
The orthogonality in \eqref{observedbasis} and the definition \eqref{rhok} give
\begin{equation}\label{coefficientidentity2}
\Vert A_{0,k}\Vert_{L^2(\omega)}^2+\Vert A_{1,k}\Vert_{L^2(\omega)}^2
= \rho_k^2\vert a_{1,k}\vert^2+\alpha_k\vert a_{2,k}\vert^2
= \langle \Gamma_ka_k,a_k\rangle_{\R^2}
= \big\Vert \Gamma_k^{1/2}a_k\big\Vert_{\R^2}^2,
\end{equation}
where the matrix
$$
\Gamma_k=\begin{pmatrix}\rho_k^2&0\\0&\alpha_k\end{pmatrix}
$$
represents the quadratic form given by the squared norms of the observed polynomial coefficients. 
This notation will also be used for finite cascades, where $\Gamma_k$ need not be diagonal. The coefficient identity \eqref{coefficientidentity2} corresponds to the double-block computation of \cite{BoyerMorancey_AMBP2025}, written in the present normalization.

Substituting \eqref{globalobservation2} into \eqref{gramianobservationidentity} and applying Proposition \ref{twosidedmoment} in Appendix \ref{appendix}, with $N=2$ and $X=L^2(\omega)$, gives, by \eqref{coefficientidentity2},
\begin{multline}\label{gramiancomparison2}
c_{T,2}\sum_{k\geq1}e^{-\kappa_2k}\big\Vert \Gamma_k^{1/2}a_k\big\Vert_{\R^2}^2
\leq\langle G_Tz[a],z[a]\rangle_{\mathcal{H}}\\
=\int_0^T\big\Vert B^*e^{sA^*}z[a]\big\Vert_{L^2(\omega)}^2\,ds
\leq C_{T,2}\sum_{k\geq1}\frac{1}{k}\big\Vert \Gamma_k^{1/2}a_k\big\Vert_{\R^2}^2
\end{multline}
with the constants of Proposition \ref{twosidedmoment} for $N=2$.
Since the dual Riesz basis is complete and the matrices $\Gamma_k^{1/2}$ are uniformly bounded, spectral truncation extends \eqref{gramiancomparison2} to every $a\in\ell^2(\R^2)$.

\paragraph{The numerator and duality argument.}
For $y^0\in W_{-T}$, write
$$
y^0=\sum_{k\geq1}(c_{1,k}\Phi_{1,k}+c_{2,k}\Phi_{2,k}),\qquad c_k=(c_{1,k},c_{2,k}).
$$
Formula \eqref{semigroupblocks} and biorthogonality give, for every finitely supported $a$,
\begin{equation}\label{numerator2}
\langle e^{TA}y^0,z[a]\rangle_{\mathcal{H}}=\sum_{k\geq1}e^{-Tk^2}\langle M_k(T)c_k,a_k\rangle_{\R^2},\qquad M_k(T)=\begin{pmatrix}1&TI_k\\0&1\end{pmatrix}.
\end{equation}
The triangular correction $M_k(T)$ preserves the weighted norm $c\mapsto\Vert \Gamma_k^{-1/2}c\Vert_{\R^2}$ up to constants independent of $k$. Indeed,
$$
\Gamma_k^{-1/2}M_k(T)\Gamma_k^{1/2}=\begin{pmatrix}1&TI_k\sqrt{\alpha_k}/\rho_k\\0&1\end{pmatrix},
$$
and $\vert I_k\vert\sqrt{\alpha_k}\leq\rho_k$. Hence, this conjugated matrix and its inverse, obtained by replacing $T$ with $-T$, are uniformly bounded in $k$. Together with \eqref{alphabounds}, this gives a constant $\kappa_T\geq1$ such that, for every $k\geq1$ and $c=(c_1,c_2)\in\R^2$,
\begin{equation}\label{horizonequiv}
\kappa_T^{-1}\left(\frac{\vert c_1\vert^2}{\rho_k^2}+\vert c_2\vert^2\right)
\leq\Vert \Gamma_k^{-1/2}M_k(T)c\Vert_{\R^2}^2
\leq\kappa_T\left(\frac{\vert c_1\vert^2}{\rho_k^2}+\vert c_2\vert^2\right).
\end{equation}

Assume first that the series on the right-hand side of \eqref{refinedbounds} is finite. Since $\rho_k$ is uniformly bounded, the corresponding initial datum belongs to $W_{-T}$. Writing each summand of \eqref{numerator2} as
$$
e^{-Tk^2}\langle M_k(T)c_k,a_k\rangle_{\R^2}=\big\langle e^{-Tk^2+\kappa_2k/2}\Gamma_k^{-1/2}M_k(T)c_k,e^{-\kappa_2k/2}\Gamma_k^{1/2}a_k\big\rangle_{\R^2},
$$
the Cauchy--Schwarz inequality in $\ell^2(\R^2)$ gives
$$
\vert\langle e^{TA}y^0,z[a]\rangle_{\mathcal{H}}\vert^2\leq\Big(\sum_{k\geq1}e^{-2Tk^2+\kappa_2k}\Vert \Gamma_k^{-1/2}M_k(T)c_k\Vert_{\R^2}^2\Big)\Big(\sum_{k\geq1}e^{-\kappa_2k}\big\Vert \Gamma_k^{1/2}a_k\big\Vert_{\R^2}^2\Big).
$$
The first factor is finite by \eqref{horizonequiv}, while the lower bound in \eqref{gramiancomparison2} controls the second factor by $c_{T,2}^{-1}$$\langle G_Tz[a],z[a]\rangle_{\mathcal{H}}$. The quotient in \eqref{rangecriterionCT} is thus uniformly bounded for finite spectral test vectors. By density and continuity of the numerator and of $L_T^*$, the bound extends to every $z\in\mathcal{H}$. Lemma \ref{rangeHUM} yields $y^0\in\mathcal{C}_T$ and
$$
\Vert y^0\Vert_{\mathcal{C}_T}^2\leq c_{T,2}^{-1}\sum_{k\geq1}e^{-2Tk^2+\kappa_2k}\Vert \Gamma_k^{-1/2}M_k(T)c_k\Vert_{\R^2}^2.
$$
Using \eqref{horizonequiv} proves the right-hand inequality in \eqref{refinedbounds}, with $C_T = \kappa_Tc_{T,2}^{-1}$.

For the left-hand inequality, let $y^0\in\mathcal{C}_T$ and let $F\subset\N\setminus\{0\}$ be finite.
Choose $a_k=k e^{-Tk^2}\Gamma_k^{-1}M_k(T)c_k$ for $k\in F$ and $a_k=0$ otherwise, and set $z_F=z[a]$. Then \eqref{numerator2} gives
$$
\langle e^{TA}y^0,z_F\rangle_{\mathcal{H}}=\sum_{k\in F}k e^{-2Tk^2}\Vert \Gamma_k^{-1/2}M_k(T)c_k\Vert_{\R^2}^2=S_F,
$$
while the upper bound in \eqref{gramiancomparison2} gives $\langle G_Tz_F,z_F\rangle_{\mathcal{H}}\leq C_{T,2} S_F$.
The dual formula \eqref{dualCT} therefore implies $S_F^2\leq C_{T,2}\Vert y^0\Vert_{\mathcal{C}_T}^2 S_F$.
Dividing by $S_F$ when $S_F>0$, and noting that the resulting inequality also holds when $S_F=0$, we obtain, using \eqref{horizonequiv},
$$
\Vert y^0\Vert_{\mathcal{C}_T}^2\geq C_{T,2}^{-1}S_F\geq(C_{T,2}\kappa_T)^{-1}\sum_{k\in F}k e^{-2Tk^2}\left(\frac{\vert c_{1,k}\vert^2}{\rho_k^2}+\vert c_{2,k}\vert^2\right).
$$
Taking $F=\{1,\ldots,n\}$ and letting $n\to+\infty$ proves the left-hand inequality in \eqref{refinedbounds}, with $c_T = (C_{T,2}\kappa_T)^{-1}$. The inclusion $\mathcal{C}_T\hookrightarrow\mathcal{V}_T$ follows by discarding the factor $k\geq1$. Finally, the bound $e^{\kappa_2k}\leq C_{\varepsilon}e^{2\varepsilon k^2}$ in the right-hand inequality of \eqref{refinedbounds} gives $\mathcal{V}_{T-\varepsilon}\hookrightarrow\mathcal{C}_T$.

\begin{remark}[The subquadratic loss and the origin of the threshold]
The loss in the M\"untz--Sz\'asz inequality is linear in $k$, that is, in the square root of the eigenvalue $k^2$, whereas heat dissipation is quadratic in $k$. Thus $e^{\kappa_2k}\leq C_{\varepsilon}e^{2\varepsilon k^2}$ for every $\varepsilon>0$, which explains the shift from $T$ to $T-\varepsilon$ in the sufficient inclusion. This linear exponential loss is negligible at the Gaussian scale used to define the critical times.

The lower inequality in \eqref{twosidedmomentineq} holds for every $T>0$ and every fixed multiplicity $N$; only its constant deteriorates as $T\to0^+$. The exponential fast-control scale is not an artifact of a particular proof: in a broad selfadjoint heat-semigroup setting, \cite{BoulardHayatTrelat_arXiv2026} proves an optimal geometric lower barrier for weighted observability and recovers the maximal-distance lower bound for fast controls. There is nevertheless no positive threshold in the M\"untz--Sz\'asz inequality itself. This differs from the classical Ingham inequality for separated real frequencies, whose coercive estimate requires an interval longer than a gap-dependent critical length \cite{Ingham_MZ1936}; see also the moment problem with moving control in \cite{MartinRosierRouchon_SICON2013}. Here the critical times are created by the degeneration of the observed generalized eigenspaces, measured by $\rho_k$, not by the family $(e^{-k^2t},te^{-k^2t})_{k\geq1}$ alone.
\end{remark}

\subsection{Identification with the moments of the disjoint geometry}\label{spectralphenomena}
Assume \eqref{disjoint}, so that $\supp(\beta)\subset[0,a]\cup[b,\pi]$, and set
$$
I_{1,k} = \int_0^a\beta(x)\varphi_k(x)^2\, dx.
$$
The next proposition is the present normalization of the geometric reduction proved in \cite[Theorem 1.4 and Proposition 2.1]{BoyerMorancey_AMBP2025}, building on the minimal-time formula announced in \cite{AKBGBdT_CRM2014} and developed in \cite{AKBGBdT_JMAA2016}.
We include a direct proof to track the polynomial factors used in the examples; this proposition does not require \eqref{FH}.

\begin{proposition}\label{geometricrho}
There are constants $C \geq c>0$ such that
\begin{equation}\label{rhoequivalence}
\frac{c}{k}\max(\vert I_k\vert,\vert I_{1,k}\vert) \leq \rho_k \leq C\max(\vert I_k\vert,\vert I_{1,k}\vert)
\end{equation}
for every $k \geq 1$. In particular, $\rho_k = 0$ if and only if $I_k = I_{1,k} = 0$, and
\begin{equation}\label{Tstargeometry}
T^\star = \limsup_{k\to +\infty}\frac{\min(-\log\vert I_k\vert,-\log\vert I_{1,k}\vert)}{k^2},
\end{equation}
with the convention $-\log0 = +\infty$. This is the distributed minimal time in \cite{AKBGBdT_JMAA2016}.
\end{proposition}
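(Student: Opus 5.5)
The plan is to compute $\theta_{k\vert\omega}$ explicitly, using that $\beta$ vanishes on $\omega$. Under \eqref{disjoint}, equation \eqref{psik} restricted to $\omega$ reads $-\psi_k''-k^2\psi_k=-I_k\varphi_k$. Since $-(x\cos kx)''-k^2x\cos kx=2k\sin kx$, a particular solution is $p_k(x)=-\frac{I_k}{2k}\sqrt{2/\pi}\,x\cos(kx)$. Hence on $\omega$ we can write
$$
\psi_k=\lambda_k\varphi_k+\mu_k\sqrt{2/\pi}\cos(k\,\cdot)+p_k
$$
for some real constants $\lambda_k,\mu_k$. Since $\theta_{k\vert\omega}$ is the $L^2(\omega)$-orthogonal projection of $\psi_{k\vert\omega}$ onto $\varphi_{k\vert\omega}^\perp$, the term $\lambda_k\varphi_k$ disappears. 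Thus $\theta_{k\vert\omega}$ is the projection of $\mu_k\sqrt{2/\pi}\cos(k\cdot)+p_k$ off $\varphi_k$.

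The key step is to express $\mu_k$ through $I_{1,k}$ with a Wronskian identity on $[0,a]$. Multiplying \eqref{psik} by $\varphi_k$ and integrating over $(0,a)$, the boundary terms at $0$ vanish, and we get
$$
\psi_k(a)\varphi_k'(a)-\psi_k'(a)\varphi_k(a)=\int_0^a(\beta-I_k)\varphi_k^2\,dx=I_{1,k}-I_k\Big(\frac a\pi-\frac{\sin 2ka}{2k\pi}\Big).
$$
Evaluating the same Wronskian at $x=a$ with the representation on $\bar\omega$ gives $\frac{2}{\pi}(-k)\mu_k$, since $W(\sin kx,\cos kx)=-k$, plus $W(\varphi_k,p_k)(a)$. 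The latter is $O(|I_k|)$ uniformly in $k$ because $p_k$ and $p_k'/k$ are $O(|I_k|/k)$ and $O(|I_k|)$ respectively. Therefore
$$
\mu_k=-\frac{\pi}{2k}I_{1,k}+O\Big(\frac{|I_k|}{k}\Big).
$$

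For the upper bound in \eqref{rhoequivalence}, use $\Vert\theta_k\Vert_{L^2(\omega)}\leq C|\mu_k|+\Vert p_k\Vert_{L^2(\omega)}\leq C(|I_{1,k}|+|I_k|)$, and note that $I_k^2\alpha_k\leq I_k^2$; then \eqref{rhok} gives $\rho_k\leq C\max(|I_k|,|I_{1,k}|)$.

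For the lower bound, first check that the distance in $L^2(\omega)$ from $\cos(k\cdot)$ to $\Span\{\sin(k\cdot)\}$ is bounded below uniformly in $k$. It is positive for each $k$, since the two functions are not collinear on an interval, and its square tends to $(b-a)/2$ as $k\to+\infty$ by Riemann--Lebesgue. This gives
$$
\Vert\theta_k\Vert_{L^2(\omega)}\geq c|\mu_k|-\Vert p_k\Vert_{L^2(\omega)}\geq \frac{c}{k}|I_{1,k}|-\frac{C}{k}|I_k|.
$$
Combined with $\rho_k\geq\sqrt{m_\omega}|I_k|$, we split into two cases. If $|I_k|\geq\frac{c}{2C}|I_{1,k}|$, the bound $\rho_k\geq\sqrt{m_\omega}|I_k|$ suffices. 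Otherwise $\Vert\theta_k\Vert\geq\frac{c}{2k}|I_{1,k}|$. In both cases $\rho_k\geq\frac{c'}{k}\max(|I_k|,|I_{1,k}|)$.

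The consequences then follow directly. The equivalence $\rho_k=0\iff I_k=I_{1,k}=0$ is immediate from \eqref{rhoequivalence}. Formula \eqref{Tstargeometry} follows from \eqref{Tstarintro} because $\log k=o(k^2)$, and $\min(-\log|I_k|,-\log|I_{1,k}|)=-\log\max(|I_k|,|I_{1,k}|)$.

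I expect the main obstacle to be the uniformity in $k$ in the Wronskian step: the $O(|I_k|/k)$ error in $\mu_k$ and the lower bound on the angle between $\sin(k\cdot)$ and $\cos(k\cdot)$ in $L^2(\omega)$. I would handle it by writing everything explicitly with $x\in[a,b]$ bounded and the trigonometric functions bounded.
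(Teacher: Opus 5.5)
Your proposal is correct and follows essentially the paper's proof: the same decomposition of $\psi_k$ on $\omega$, the same Wronskian identity on $[0,a]$ tying the cosine coefficient to $I_{1,k}$, and distance estimates to $\Span(\varphi_k)$ in $L^2(\omega)$. The only differences are minor. The paper observes that the $I_k$-terms in the two expressions of the Wronskian at $a$ cancel exactly, giving $\mu_k=\frac{\pi}{2k}I_{1,k}$; the cosine part contributes $+\frac{2k}{\pi}\mu_k$ to $\psi_k\varphi_k'-\psi_k'\varphi_k$, so your sign is off, but this is harmless since only $\vert\mu_k\vert$ enters. The paper also replaces your angle bound between $\cos(k\cdot)$ and $\sin(k\cdot)$ and the case split with $\rho_k\geq\sqrt{m_\omega}\vert I_k\vert$ by a uniform Gram-matrix bound for $\cos(k\cdot)$, $x\cos(k\cdot)$, $\sin(k\cdot)$ on $\omega$, which gives $\Vert\theta_k\Vert_{L^2(\omega)}\asymp\frac{1}{k}(\vert I_{1,k}\vert+\vert I_k\vert)$ directly; both routes prove the statement.
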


The intrinsic quantity $\rho_k$ and Theorem \ref{main} do not require disjoint supports. Proposition \ref{geometricrho} converts the geometry of the generalized eigenspaces into the explicit moments of the classical model. More general open control sets and Sturm--Liouville operators are treated through block Gram matrices in \cite{BoyerMorancey_AMBP2025}.

\begin{proof}
Set $\chi_k(x) = \sqrt{2/\pi}\cos(kx)$. On $\omega$, equation \eqref{psik} becomes $\psi_k''+k^2\psi_k = I_k\varphi_k$, hence
\begin{equation}\label{psionomega}
\psi_k(x) = A_k\chi_k(x) + B_k\varphi_k(x)-\frac{I_k}{2k}x\chi_k(x),\qquad x\in\omega.
\end{equation}
Consider $w_k = \psi_k\varphi_k'-\psi_k'\varphi_k$. Equation \eqref{psik} gives
$w_k'(x) = (\beta(x)-I_k)\varphi_k(x)^2$,
and the Dirichlet conditions give
$w_k(x) = \int_0^x\beta(s)\varphi_k(s)^2ds-I_k\int_0^x\varphi_k(s)^2ds$.
On $\omega$, using $\varphi_k' = k\chi_k$ and $\chi_k' = -k\varphi_k$, the same Wronskian computed from \eqref{psionomega} is
$$
w_k(x) = \frac{2k}{\pi}A_k-\frac{I_k}{\pi}\Big(x-\frac{\sin(2kx)}{2k}\Big),\qquad x\in\overline\omega.
$$
Since $\int_0^a\varphi_k(s)^2ds = \frac{a}{\pi}-\frac{\sin(2ka)}{2\pi k}$, the terms carrying $I_k$ in the two expressions of $w_k(a)$ are equal, and we obtain the exact identity
$A_k = \frac{\pi}{2k}I_{1,k}$.
The coefficient $B_k$ does not appear, since the Wronskian of $\varphi_k$ with itself vanishes. Thus the cosine part of the generalized eigenfunction on $\omega$ is exactly, and not only asymptotically, the coupling accumulated on the left of $\omega$.

By \eqref{deltatheta}, $\Vert\theta_k\Vert_{L^2(\omega)}$ is the distance from $\psi_k$ to $\Span(\varphi_k)$ in $L^2(\omega)$, so that \eqref{psionomega} gives
$$
\Vert\theta_k\Vert_{L^2(\omega)} = \dist_{L^2(\omega)}\Big(A_k\chi_k-\frac{I_k}{2k}x\chi_k,\Span(\varphi_k)\Big).
$$
For each $k\geq1$, the functions $\chi_k$, $x\chi_k$ and $\varphi_k$ are linearly independent in $L^2(\omega)$: applying $\partial_{xx}+k^2$ to a linear relation $\lambda\chi_k+\mu x\chi_k+r\varphi_k = 0$ on $\omega$ gives $-2k\mu\varphi_k = 0$, hence $\mu = 0$, and then $\lambda = r = 0$. Their Gram matrices in $L^2(\omega)$ are therefore positive definite. As $k\to+\infty$, these matrices converge to the positive definite matrix $\frac{1}{\pi}\mathrm{diag}(G,\vert\omega\vert)$, where $G$ is the Gram matrix of $(1,x)$ in $L^2(\omega)$. Thus there are $C\geq c>0$ such that, for every $k\geq1$ and all $\lambda,\mu\in\R$,
$$
c(\vert\lambda\vert + \vert\mu\vert) \leq \dist_{L^2(\omega)}(\lambda\chi_k + \mu x\chi_k,\Span(\varphi_k)) \leq C(\vert\lambda\vert + \vert\mu\vert).
$$
With $\lambda = A_k = \frac{\pi}{2k}I_{1,k}$ and $\mu = -I_k/(2k)$, this gives
$$
\frac{c}{k}(\vert I_{1,k}\vert + \vert I_k\vert) \leq \Vert\theta_k\Vert_{L^2(\omega)} \leq \frac{C}{k}(\vert I_{1,k}\vert + \vert I_k\vert).
$$
Since $\rho_k^2 = I_k^2\alpha_k + \Vert\theta_k\Vert_{L^2(\omega)}^2$ and $m_\omega \leq \alpha_k \leq 1$ by \eqref{alphabounds}, both bounds of \eqref{rhoequivalence} follow, the lower one from $\rho_k \geq \sqrt{m_\omega}\vert I_k\vert$ and $\rho_k \geq \Vert\theta_k\Vert_{L^2(\omega)} \geq c\vert I_{1,k}\vert/k$. These bounds also give the equivalence of the vanishing conditions, without assuming \eqref{FH}. Taking logarithms in \eqref{rhoequivalence} and dividing by $k^2$ proves \eqref{Tstargeometry}.
\end{proof}

By symmetry, one may replace $I_{1,k}$ with $I_{2,k} = \int_b^\pi\beta\varphi_k^2$ in \eqref{rhoequivalence}. The two moments record how the coupling accumulated on either side of the observation interval feeds the cosine part of the generalized eigenfunction. This is the source of the geometric dependence on the relative position of $\omega$ and $\supp(\beta)$.

\subsection{Three model situations}\label{sec_3examples}

\begin{example}
Assume \eqref{disjoint} and $\int_0^\pi\beta(x)\, dx\neq0$. The Riemann--Lebesgue lemma gives
$I_k\rightarrow\frac{1}{\pi}\int_0^\pi\beta(x)\, dx\neq0$.
Hence $\inf_{k \geq k_0}\rho_k>0$ and $T^\star = 0$. Under \eqref{FH}, the system is null controllable from $\mathcal{H}$ in every positive time. Corollary \ref{Wphase} says more: at time $T$, it is null controllable from every $W_\sigma$ with $\sigma>-T$. The borderline case $\sigma=-T$ is not decided by the quadratic critical-time formula.
\end{example}

\begin{example}
Fix $\vartheta\in[0, +\infty]$. Following the proof of \cite[Theorem 7.3]{AKBGBdT_JMAA2016}, choose an irrational number $\alpha\in(0,a/(3\pi))$ such that
$$
\limsup_{k\to +\infty}\frac{-\log\vert\sin(k\alpha\pi)\vert}{k^2} = \frac{\vartheta}{2}
$$
(such numbers are constructed there), a quadratic irrational $\ell\in(2a/(3\pi),4a/(3\pi))$, and set $a_1 = \ell/2-\alpha$, so that $a_1>0$ and $(a_1 + 2\alpha)\pi<a$. Consider the coupling supported on two adjacent intervals of equal length located to the left of $\omega$,
$$
\beta = 1_{[a_1\pi,(a_1 + \alpha)\pi)}-1_{[(a_1 + \alpha)\pi,(a_1 + 2\alpha)\pi]}.
$$
Since $\varphi_k^2 = \frac{1}{\pi}(1-\cos(2kx))$ and the two intervals have the same length, a direct computation gives
$$
I_{1,k} = I_k = -\frac{2}{k\pi}\sin^2(k\alpha\pi)\sin(2k\pi(a_1 + \alpha)) = -\frac{2}{k\pi}\sin^2(k\alpha\pi)\sin(k\pi\ell).
$$
Quadratic irrationals are badly approximable, so $\vert\sin(k\pi\ell)\vert\geq c/k$ for every $k\geq1$: the last sine never vanishes and contributes at most a logarithmic term to $-\log\vert I_k\vert$ on the whole sequence. As $\alpha$ is irrational, $I_k\neq0$ for every $k\geq1$, so that \eqref{FH} holds, and Proposition \ref{geometricrho} gives $T^\star=\vartheta$.

Because $\alpha$ is irrational, $\vert\sin(k\alpha\pi)\vert$ stays away from zero along an infinite subsequence. The polynomial lower bound for the other sine and Proposition \ref{geometricrho} then show that $\rho_k$ has only polynomial size along this subsequence. For $0<\vartheta<+\infty$, there is also a resonant subsequence on which $\rho_k=e^{-\vartheta k^2+\mathrm{o}(k^2)}$. The first-coordinate weight of $\mathcal{V}_T$ therefore has the respective asymptotic forms $e^{-2Tk^2+\mathrm{o}(k^2)}$ and $e^{2(\vartheta-T)k^2+\mathrm{o}(k^2)}$ on these two subsequences. When $0<T<\vartheta$, the regularity requirements differ exponentially between them. If $\vartheta=+\infty$, one instead obtains a subsequence on which $-\log\rho_k/k^2\to+\infty$, and there is no finite time of null controllability from $\mathcal{H}$. These arithmetic weights are analogous to the Diophantine controllable-data spaces in \cite{Zuazua_JEMS2011}. The next remark makes precise why isotropic spectral weights cannot describe this behavior.
\end{example}

\begin{remark}\label{nonstandard}
We compare $\mathcal{C}_T$ with isotropic spectral scales defined in the adapted Riesz basis. Assume $0<\vartheta<+\infty$ and $0<T<\vartheta$, and let $X$ be the Hilbert space of formal series with norm $\Vert y\Vert_X^2=\sum_kw_k(\vert c_{1,k}\vert^2+\vert c_{2,k}\vert^2)$, where $w_k>0$ and
$$
\gamma = \lim_{k\to +\infty}\frac{\log w_k}{2k^2}
$$
exists in $[-\infty,+\infty]$. Polynomial weights $w_k=k^{2s}$ define spectral Sobolev scales and have $\gamma=0$, as do the sub-Gaussian Gevrey weights $w_k=e^{2ck^{\nu}}$ with $c>0$ and $0<\nu<2$ (Gevrey index $1/\nu>\frac{1}{2}$). These are scales associated with the adapted basis; the Riesz basis property in $L^2$ alone does not identify them with the usual spatial Sobolev or Gevrey spaces. Any set inclusion between $X$ and $\mathcal{C}_T$ is continuous by the closed graph theorem, since convergence in either space implies convergence of every spectral coefficient.

If $X\hookrightarrow\mathcal{C}_T$, the one-block estimate \eqref{singleblocknecessary}, tested on the first direction, gives $w_k\geq ck^2e^{-2Tk^2}\rho_k^{-2}$.
Along the resonant subsequence, on which $\rho_k=e^{-\vartheta k^2+\mathrm{o}(k^2)}$, this implies $\gamma\geq\vartheta-T>0$. Conversely, if $\mathcal{C}_T\hookrightarrow X$, then Theorem \ref{main} gives $\mathcal{V}_\tau\hookrightarrow X$ for every $\tau<T$. Testing this inclusion on one block in the second direction gives $w_k\leq Ce^{-2\tau k^2}$, hence $\gamma\leq-T<0$. Thus no isotropic scale with a single Gaussian rate can be equivalent to $\mathcal{C}_T$. When $\gamma=0$, neither inclusion is possible: $\mathcal{C}_T$ is incomparable with every spectral Sobolev space and with every spectral Gevrey space in the range above. The weight $\rho_k^{-2}$ records this anisotropy, analogous to the coupling-dependent oscillatory weights of \cite{LPT_arXiv2026} and the point-actuated heat setting of \cite{Zuazua_JEMS2011}.
\end{remark}

\begin{example}
Let $\beta = 1_{(\pi/4,3\pi/8)}-1_{(5\pi/8,3\pi/4)}$.
The function is antisymmetric about $\pi/2$, whereas $\varphi_k^2$ is symmetric, so $I_k = 0$ for every $k$. If $3\pi/8<a<b<5\pi/8$, then
$I_{1,k} = \int_{\pi/4}^{3\pi/8}\varphi_k(x)^2\, dx = \frac{1}{8} + \mathrm{O}(\frac{1}{k})$.
Proposition \ref{geometricrho} gives $\rho_k \geq c/k$ and $T^\star = 0$. The system is approximately and null controllable although every algebraic Jordan coefficient vanishes. If instead $\omega\subset(0,\pi/4)$, then $I_k = I_{1,k} = 0$, so $\rho_k = 0$ and approximate controllability fails; this second placement lies outside hypothesis \eqref{FH}. This example, related to \cite[Proposition 2.5]{BoyerMorancey_AMBP2025}, shows why $I_k$ alone is not the correct invariant for distributed control.
\end{example}

The preceding examples also clarify the relation with \cite{LPT_arXiv2026}. In both the hyperbolic--parabolic and the present parabolic setting, a sequence of small modal couplings creates spaces whose weights can oscillate along selected subsequences. Here the left-hand estimate in \eqref{refinedbounds} proves that the nonstandard weight is unavoidable at the Gaussian scale.

\section{Finite strictly upper triangular heat cascades}\label{finitecascades}
For two equations, one number $\rho_k$ describes the potentially weak direction in the $k$-th spectral block. With $N\geq3$ components, the block observation map has $N$ singular values, and the corresponding families may have different quadratic exponential orders. The least observed direction determines the critical time of the full state space; the complete list yields an ordered family of modal transition times.

The analysis below allows all bounded strictly upper triangular couplings, not only couplings between neighboring equations. Such systems are still triangular cascades. We reserve the term \emph{nearest-neighbor chain} for the case in which only the entries $(j,j+1)$ are present. In the coupling graph, there is an arrow $j\to i$ when the entry $(i,j)$ is nonzero; a \emph{directed coupling path} is a sequence of such arrows carrying the influence of the controlled component towards an uncontrolled one. Null controllability for classes of three-equation parabolic systems with space-dependent coefficients and one or two localized controls was established by Carleman and decomposition arguments in \cite{Mauffrey_JMPA2013}. Boundary-coupled heat cascades have also been studied through cascade-level spectral reductions and finite-dimensional feedback designs \cite{LPT_Automatica2026,LPT_SCL2026,LPT_EJC2026}; a related Riesz-spectral output-feedback design for a wave--heat cascade is developed in \cite{LPT_TAC2026}. The latter four works concern stabilization rather than the fixed-horizon space of null-controllable initial data considered here.

We follow the order of Section \ref{twocascade}: spectral preliminaries and the fixed-horizon theorem, critical times and comparison with heat scales, then the proof and examples. Subsection \ref{proofmainN} repeats the two steps of Subsection \ref{proofmain}: a moment estimate for the full adjoint observation, followed by a weighted duality argument. The notation is kept unchanged wherever the same construction applies.

\subsection{Generator and adapted block bases}
Fix $N \geq 2$ and consider
$$
\partial_t y = Ay + Bu,\qquad A = \partial_{xx}\,\Id_{\R^N} + \sum_{1 \leq i<j \leq N}\beta_{ij}E_{i,j},\qquad Bu = 1_\omega u\, e_N,
$$
on $\mathcal{H}=L^2(0,\pi;\R^N)$, with $D(A)=H^2(0,\pi;\R^N)\cap H_0^1(0,\pi;\R^N)$. Here $(e_1,\ldots,e_N)$ is the canonical basis, $E_{i,j}$ is the corresponding matrix unit, and every $\beta_{ij}$ is real-valued and belongs to $L^\infty(0,\pi)$.
Since $N$ is fixed, we omit it from the notation for the operators and spaces: $A$, $B$, $\mathcal{H}$, $L_T$, $G_T$, $\mathcal{R}_T$ and $\mathcal{C}_T$ now refer to this cascade. Constants may depend on $N$; the index is retained in $\kappa_N$ to identify the multiplicity in Proposition \ref{twosidedmoment}. 

\begin{lemma}[Adapted block bases]\label{rieszN}
For every $k \geq 1$, the generalized eigenspaces $E_k$ of $A$ and $E_k^*$ of $A^*$ at $-k^2$ have dimension $N$. There are maps $\Phi_k:\R^N\to E_k$ and $\Psi_k:\R^N\to E_k^*$ such that $(\Phi_{j,k})_{j,k}$ and $(\Psi_{j,k})_{j,k}$, where $\Phi_{j,k}=\Phi_ke_j$ and $\Psi_{j,k}=\Psi_ke_j$, are dual Riesz bases of $\mathcal{H}$. They may be normalized so that
\begin{equation}\label{flagnormalizationN}
\Phi_{j,k} = \varphi_ke_j + \zeta_{j,k},\qquad \Psi_{j,k} = \varphi_ke_j + \xi_{j,k},
\end{equation}
where $\zeta_{j,k}$ has nonzero entries only in components $1,\ldots,j-1$, and $\xi_{j,k}$ only in components $j+1,\ldots,N$. In these bases,
\begin{equation}\label{similarityN}
A\Phi_k = \Phi_k(-k^2\Id_{\R^N}+J_k),\qquad A^*\Psi_k = \Psi_k(-k^2\Id_{\R^N}+J_k^*),
\end{equation}
where $J_k$ is strictly upper triangular, $J_k^N=0$ and $\sup_k\Vert J_k\Vert<+\infty$.
\end{lemma}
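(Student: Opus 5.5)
The plan is to build the bases recursively from the Dirichlet eigenfunctions by solving triangular systems of resolvent equations, and then invoke the triangular Riesz criterion of Lemma \ref{lem_triangular}, as in the proof of Lemma \ref{lem_Jordan}.

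\emph{Construction of $\Phi_{j,k}$.} Fix $k$. We look for $\Phi_{j,k}=\varphi_ke_j+\sum_{i<j}f^{(j)}_{i}e_i$ and for the $j$-th column of $J_k$. The $i$-th component ($i<j$) of $(A+k^2)\Phi_{j,k}$ is
\[
(\partial_{xx}+k^2)f^{(j)}_i+\sum_{i<l\leq j}\beta_{il}f^{(j)}_l,
\]
with $f^{(j)}_j=\varphi_k$. It must equal $\sum_{m<j}(J_k)_{mj}(\Phi_{m,k})_i$.
\begin{itemize}
\item We solve these equations for $i=j-1,j-2,\dots,1$ in turn.
\item At step $i$, the coefficient $(J_k)_{ij}$ is chosen so that the right-hand side is orthogonal to $\varphi_k$. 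This is the Fredholm condition for $\partial_{xx}+k^2$ with Dirichlet conditions; it plays the role of $I_k$ in \eqref{psik}.
\item We then solve for $f^{(j)}_i$, normalized by $\langle f^{(j)}_i,\varphi_k\rangle=0$. One can add multiples of $\varphi_k$ as in Lemma \ref{lem_Jordan} if convenient.
\end{itemize}
The gap estimate $|n^2-k^2|\geq 2k-1$ gives $\|f^{(j)}_i\|\leq C/k$ inductively. It also gives $|(J_k)_{ij}|\leq C$, since $(J_k)_{ij}$ is a bounded multiple of $\langle\beta_{ij}\varphi_k,\varphi_k\rangle$ plus $O(1/k)$ corrections. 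Hence $\sup_k\|J_k\|<\infty$. Strict upper triangularity gives $J_k^N=0$. By construction $(A+k^2)\Phi_{j,k}\in\Span\{\Phi_{m,k}\}_{m<j}$, so $\Phi_{1,k},\dots,\Phi_{N,k}$ span an $N$-dimensional invariant subspace on which $A+k^2$ is nilpotent. Therefore it lies in $E_k$.

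\emph{Riesz basis and dimension.} The correction $\zeta_{j,k}=\Phi_{j,k}-\varphi_ke_j$ satisfies $\|\zeta_{j,k}\|\leq C/k$, so it is square summable. Moreover, $\zeta_{j,k}$ lives in components $<j$, which gives the block-triangular structure required by Lemma \ref{lem_triangular}. The flag is $\mathcal{X}_i=$ the $i$-th component copy of $L^2$, and the operator $K$ maps $e_j\varphi_k\mapsto\zeta_{j,k}$, which is strictly triangular with respect to the component grading. Lemma \ref{lem_triangular} therefore yields a Riesz basis with dual $e_{j,k}-K^*e_{j,k}$ (or the general dual given there for the $N$-step flag). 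Since $K^*$ maps component $i$ to components $>i$, the dual vectors have the form $\varphi_ke_j+\xi_{j,k}$ with $\xi_{j,k}$ supported in components $j+1,\dots,N$, matching \eqref{flagnormalizationN}.

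Completeness of the full basis shows that $E_k$ has dimension exactly $N$, with no other spectrum. Any generalized eigenvector at $-k^2$ expands in the basis, and the part on other blocks must vanish because $(A+k^2)$ acts invertibly there, its restriction being $(k^2-n^2)\Id+J_n$. The same holds for $E_k^*$.

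\emph{Adjoint relation.} For biorthogonal bases, $\langle A^*\Psi_{i,k},\Phi_{j,n}\rangle=\langle\Psi_{i,k},A\Phi_{j,n}\rangle$. This pairing vanishes for $n\neq k$ and equals $(-k^2\Id+J_k)_{ij}$ for $n=k$, which gives $A^*\Psi_k=\Psi_k(-k^2\Id+J_k^*)$. Hence $\Psi_k$ maps into $E_k^*$. For the domain, note that each $\Phi_{j,k}\in D(A)$ since each $f_i^{(j)}\in H^2\cap H^1_0$. For $\Psi_{j,k}\in D(A^*)$, one checks that $\Psi_{j,k}$ is a finite combination determined by a similar triangular resolvent construction for $A^*$. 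Alternatively, since $E_k^*$ is finite-dimensional and $A^*$-invariant, with dimension $N$ by the same argument applied to $A^*$, duality identifies it with the span of the $\Psi_{j,k}$.

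\emph{Main obstacle.} The hardest step is applying Lemma \ref{lem_triangular} for an $N$-level flag rather than two levels. In particular, one must verify that the explicit dual $e-K^*e$ (or a Neumann-series variant $(\Id+K^*)^{-1}$, which is a finite sum since $K$ is nilpotent of order $N$) lies in $E_k^*$ with the stated support. The triangular nilpotent structure should make this a finite algebraic computation. Uniformity of the bounds in $k$ is routine from the spectral gap.
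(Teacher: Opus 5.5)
Your construction of the primal vectors $\Phi_{j,k}$ and your use of Lemma \ref{lem_triangular} (Riesz property, flag support of the dual vectors) match the paper. The gap is in the step the lemma actually hinges on: showing that the dual vectors $\Psi_{j,k}$ belong to $D(A^*)$, span $E_k^*$, and satisfy $A^*\Psi_k=\Psi_k(-k^2\Id+J_k^*)$.

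\emph{Why the proposal does not establish this.} Your adjoint-relation paragraph assumes $\Psi_{i,k}\in D(A^*)$. Your dimension count for $E_k$ assumes it too. To see that $A+k^2$ ``acts invertibly on the other blocks'' of an infinite expansion, you pair with $(A^*+k^2)^m\Psi_{i,n}$, which needs exactly that property. As ordered, the argument is therefore circular. Your ``main obstacle'' also points the wrong way. Lemma \ref{lem_triangular} already handles $N$ levels trivially. By contrast, checking directly that $\sum_{r<N}(-K^*)^r(\varphi_ke_j)$ is a generalized eigenvector of $A^*$ is not a finite algebraic computation. The vector $K^*(\varphi_ke_j)$ is an infinite series over all frequencies, and its coefficients $\langle\varphi_k,(\zeta_{l,n})_j\rangle$ would have to be summed through cross-frequency identities. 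Already for $N=2$ the paper needs $\langle\varphi_k,\psi_n\rangle=-\langle\psi_k,\varphi_n\rangle$ for $n\neq k$, and such identities become unwieldy for $N\geq3$.

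\emph{The missing idea in the paper.} The paper builds the adjoint family independently and then proves biorthogonality.
\begin{itemize}
\item Running your construction on the reversed cascade $A^*=A_0+Q^*$ gives $\widetilde\Psi_{j,k}=\varphi_ke_j+\widetilde\xi_{j,k}\in D(A^*)$ with $A^*\widetilde\Psi_k=\widetilde\Psi_k(-k^2\Id+\widetilde J_k)$, where $\widetilde J_k$ is strictly lower triangular.
\item The matrix $M_{ij}=\langle\Phi_{j,k},\widetilde\Psi_{i,\ell}\rangle_{\mathcal{H}}$ solves the Sylvester equation $M(-k^2\Id+J_k)=(-\ell^2\Id+\widetilde J_\ell)^*M$. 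Hence $M=0$ for $\ell\neq k$, because the two spectra are disjoint.
\item For $\ell=k$, the flag supports make $M$ upper unitriangular with $M-\Id=\mathrm{O}(k^{-1})$.
\item The renormalized family $\Psi_k=\widetilde\Psi_kM^{-*}$ is biorthogonal to $\Phi$ and keeps the flag form. It spans the $A^*$-invariant space $\Span\widetilde\Psi_k$, which gives \eqref{similarityN}. The dimensions then follow by pairing.
\end{itemize}

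\emph{A cheaper completion of your second option.} For $z\in E_k^*$, pair with $(A+k^2)^m\Phi_n=\Phi_n((k^2-n^2)\Id+J_n)^m$. This uses only the primal family and gives $\langle z,\Phi_{j,n}\rangle_{\mathcal{H}}=0$ for $n\neq k$, hence $E_k^*\subset\Span\Psi_k$. Equality, and with it $\Psi_{j,k}\in D(A^*)$, still requires $\dim E_k^*\geq N$, that is, $N$ independent generalized eigenvectors of $A^*$. These can come from the reversed construction, or from Riesz projections using the compact resolvent. ``The same argument applied to $A^*$'' does not supply them on its own.
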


This is the counterpart of Lemma \ref{lem_Jordan}; its proof is given in Subsection \ref{app_proof_lem_rieszN} of the appendix. The matrices $J_k$ need not be single Jordan blocks: the construction retains the entire generalized eigenspace with uniformly conditioned coordinates. For $N=2$, the bases of Lemma \ref{lem_Jordan} are admissible and give $J_k=\begin{pmatrix}0&I_k\\0&0\end{pmatrix}$. Formula \eqref{similarityN} then reduces to \eqref{Jordanrelations}.

The entries of $J_k$ are effective modal coupling coefficients. Its first superdiagonal is given by
$$
(J_k)_{j,j+1} = \int_0^\pi\beta_{j,j+1}(x)\varphi_k(x)^2\,dx\qquad\forall j\in\{1,\ldots,N-1\},
$$
which generalizes \eqref{def_Ik}. Indeed, $(J_k)_{j,j+1}=\langle(A+k^2)\Phi_{j+1,k},\Psi_{j,k}\rangle_{\mathcal{H}}$ by biorthogonality. By \eqref{flagnormalizationN}, the components $j+1,\ldots,N$ of $(A+k^2)\Phi_{j+1,k}$ vanish, while the components $1,\ldots,j-1$ of $\Psi_{j,k}$ vanish; the pairing therefore involves only the $j$-th components, namely $(\partial_{xx}+k^2)(\zeta_{j+1,k})_j+\beta_{j,j+1}\varphi_k$ and $\varphi_k$. Since $(\zeta_{j+1,k})_j\in H^2(0,\pi)\cap H_0^1(0,\pi)$, the term $(\partial_{xx}+k^2)(\zeta_{j+1,k})_j$ is orthogonal to $\varphi_k$, and the formula follows. The higher superdiagonals may also contain contributions from directed coupling paths and from the spatial correctors.

The semigroup formulas corresponding to \eqref{semigroupblocks} and \eqref{adjointblocks} are
\begin{equation}\label{semigroupblocksN}
e^{tA}\Phi_kc = e^{-k^2t}\Phi_ke^{tJ_k}c,\qquad e^{tA^*}\Psi_ka = e^{-k^2t}\Psi_ke^{tJ_k^*}a.
\end{equation}

\subsection{The coefficient observation form}
Since $J_k^N=0$, the observation of an adjoint block is a polynomial heat mode:
\begin{equation}\label{blockobservationN}
B^*e^{sA^*}\Psi_ka = e^{-k^2s}\sum_{m=0}^{N-1}s^m C_{k,m}a,\qquad C_{k,m} = \frac{1}{m!}B^*\Psi_k(J_k^*)^m.
\end{equation}
Each $C_{k,m}$ maps $\R^N$ into $L^2(\omega)$. The sum of the squared norms of these coefficients defines a nonnegative quadratic form, represented by
\begin{equation}\label{coefficientmatrixN}
\mathcal{O}_ka = (C_{k,0}a,\ldots,C_{k,N-1}a)\in L^2(\omega)^N,\qquad \Gamma_k = \mathcal{O}_k^*\mathcal{O}_k = \sum_{m=0}^{N-1}C_{k,m}^*C_{k,m}.
\end{equation}
Thus
\begin{equation}\label{coefficientidentityN}
\sum_{m=0}^{N-1}\Vert C_{k,m}a\Vert_{L^2(\omega)}^2 
= \Vert\mathcal{O}_ka\Vert_{L^2(\omega)^N}^2 
= \langle\Gamma_ka,a\rangle_{\R^N}
= \big\Vert\Gamma_k^{1/2}a\big\Vert_{\R^N}^2.
\end{equation}
We call $\Gamma_k$ the matrix of the coefficient observation form. The uniform bounds on $\Psi_k$, $J_k$ and $B^*$ give $\sup_k\Vert\mathcal{O}_k\Vert<+\infty$ and $\sup_k\Vert\Gamma_k\Vert<+\infty$.

For $N=2$, with the observation-adapted bases of Lemma \ref{lem_Jordan}, one has
$$
C_{k,0}a = (a_1\theta_k+a_2\varphi_k)_{\vert\omega},\qquad C_{k,1}a = (I_ka_1\varphi_k)_{\vert\omega},\qquad \Gamma_k = \begin{pmatrix}\rho_k^2&0\\0&\alpha_k\end{pmatrix}.
$$
Hence \eqref{coefficientidentityN} is exactly \eqref{coefficientidentity2}. The scalar defect $\rho_k$ measures one diagonal direction of that form. 
For $N\geq3$, the form need not be diagonal in the flag-adapted basis, so the whole matrix must be considered in the sequel.

The block Fattorini--Hautus condition is
\begin{equation}\label{FHN}
\Gamma_k>0\qquad\forall k \geq 1,
\end{equation}
or equivalently the injectivity of every $\mathcal{O}_k$. For $N=2$ it reduces to \eqref{FH}, since $\alpha_k\geq m_\omega>0$. By the finite-dimensional Hautus criterion for the nilpotent matrix $J_k^*$, injectivity is equivalent to $\ker J_k^*\cap\ker(B^*\Psi_k)=\{0\}$. The Fattorini criterion therefore gives (see \cite{BoyerOlive_MCRF2014,Fattorini_SICON1966})
\begin{equation}\label{FHequivalenceN}
\Gamma_k>0\quad\forall k\geq 1 \quad\Leftrightarrow\quad \ker L_T^*=\{0\} \quad\Leftrightarrow\quad (A,B)\text{ is approximately controllable}.
\end{equation}
Here $L_T$ is defined by \eqref{globalgramian} for the present operators $A,B$. The moment estimate below also proves the equivalence with $\ker L_T^*=\{0\}$ by extracting the coefficients in \eqref{blockobservationN}.

\subsection{The spectral scale and the main theorem}
We use the same heat scale as in \eqref{Wsigma}, now with $N$ coefficients per block:
\begin{equation}\label{heatspaceN}
\Vert y\Vert_{W_\sigma}^2 = \sum_{k\geq1}e^{2\sigma k^2}\Vert c_k\Vert_{\R^N}^2,\qquad y = \sum_{k\geq1}\Phi_kc_k.
\end{equation}
The uniform bounds on $J_k$ imply that $e^{tJ_k}$ and $e^{-tJ_k}$ are bounded uniformly in $k$ at every fixed time. By \eqref{semigroupblocksN}, $e^{TA}:W_{-T}\to\mathcal{H}$ is an isomorphism, just as for two equations. The operator-range construction of Subsection \ref{sec_range_duality} therefore applies without change: $L_T$ and $G_T$ are given by \eqref{globalgramian}, $\mathcal{R}_T=\Ran L_T$ has norm \eqref{RTnorm}, and
$$
\mathcal{C}_T = \{y^0\in W_{-T}\ \mid\ -e^{TA}y^0\in\mathcal{R}_T\},\qquad \Vert y^0\Vert_{\mathcal{C}_T} = \Vert e^{TA}y^0\Vert_{\mathcal{R}_T}.
$$
In particular, the Gramian identity \eqref{gramianobservationidentity} holds. Under \eqref{FHN}, Lemma \ref{rangeHUM} applies by \eqref{FHequivalenceN} and gives the same criterion \eqref{rangecriterionCT} and dual norm formula \eqref{dualCT}.

Assume \eqref{FHN}. The inverse weight is determined by duality:
\begin{equation}\label{Gdualcost}
\sup_{a\neq0}\frac{\vert\langle c,a\rangle_{\R^N}\vert^2}{\Vert\Gamma_k^{1/2}a\Vert_{\R^N}^2} = \big\Vert\Gamma_k^{-1/2}c\big\Vert_{\R^N}^2 = \langle\Gamma_k^{-1}c,c\rangle_{\R^N}.
\end{equation}
For $\tau\in\R$, let $\mathcal{V}_\tau$ consist of all formal series with finite norm
\begin{equation}\label{VtauN}
\Vert y\Vert_{\mathcal{V}_\tau}^2 = \sum_{k\geq1}e^{-2\tau k^2}\big\Vert\Gamma_k^{-1/2}c_k\big\Vert_{\R^N}^2,\qquad y = \sum_{k\geq1}\Phi_kc_k.
\end{equation}
The scale is increasing in $\tau$ and embeds continuously and densely in $W_{-\tau}$, by the uniform upper bound on $\Gamma_k$ and the density of finite spectral sums. For $N=2$, its block weight is $\vert c_1\vert^2/\rho_k^2+\vert c_2\vert^2/\alpha_k$, uniformly equivalent to the simplified weight $\vert c_1\vert^2/\rho_k^2+\vert c_2\vert^2$ of Subsection \ref{scaletheorem2}. Thus the two definitions give the same spaces with equivalent norms.

\begin{theorem}\label{mainN}
Assume \eqref{FHN}. There exists $\kappa_N>0$ such that, for every $T>0$, there exist $c_T>0$ and $C_T>0$ such that
\begin{equation}\label{refinedboundsN}
c_T\sum_{k\geq1}k e^{-2Tk^2}\big\Vert\Gamma_k^{-1/2}c_k\big\Vert_{\R^N}^2 \leq \Vert y\Vert_{\mathcal{C}_T}^2 \leq C_T\sum_{k\geq1}e^{-2Tk^2+\kappa_Nk}\big\Vert\Gamma_k^{-1/2}c_k\big\Vert_{\R^N}^2
\end{equation}
for every finite spectral sum $y=\sum_k\Phi_kc_k$. The right-hand estimate extends to every series for which its right-hand side is finite, and the left-hand estimate holds for every $y\in\mathcal{C}_T$. Consequently, for every $\varepsilon>0$, one has the continuous inclusions
$$
\mathcal{V}_{T-\varepsilon}\hookrightarrow\mathcal{C}_T\hookrightarrow\mathcal{V}_T.
$$
\end{theorem}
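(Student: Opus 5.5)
The proof follows the three steps of Subsection \ref{proofmain}, with the diagonal matrix $\Gamma_k$ replaced by the full coefficient observation form \eqref{coefficientmatrixN}. The starting point is the dual norm formula \eqref{dualCT}, which is available under \eqref{FHN} by \eqref{FHequivalenceN} and Lemma \ref{rangeHUM}.

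\emph{Step 1: two-sided Gramian estimate.} For a finitely supported family $a=(a_k)$ with $a_k\in\R^N$, set $z[a]=\sum_k\Psi_ka_k$. Then \eqref{blockobservationN} gives
$$
B^*e^{sA^*}z[a]=\sum_k e^{-k^2s}\sum_{m=0}^{N-1}s^mC_{k,m}a_k .
$$
This is exactly the vector-valued generalized exponential sum treated by Proposition \ref{twosidedmoment}, applied with multiplicity $N$ and $X=L^2(\omega)$. Combined with \eqref{coefficientidentityN}, it yields
$$
c_{T,N}\sum_k e^{-\kappa_Nk}\Vert\Gamma_k^{1/2}a_k\Vert^2\leq\langle G_Tz[a],z[a]\rangle_{\mathcal{H}}\leq C_{T,N}\sum_k k^{-1}\Vert\Gamma_k^{1/2}a_k\Vert^2 .
$$
Because $\sup_k\Vert\Gamma_k\Vert<\infty$ and the family $(\Psi_{j,k})$ is a Riesz basis, this estimate extends to all $a\in\ell^2(\R^N)$ by truncation.

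\emph{Step 2: numerator and horizon correction.} Biorthogonality and \eqref{semigroupblocksN} give
$$
\langle e^{TA}y^0,z[a]\rangle_{\mathcal{H}}=\sum_k e^{-Tk^2}\langle M_k(T)c_k,a_k\rangle
\qquad\text{with } M_k(T)=e^{TJ_k}.
$$
The crucial point is an analogue of \eqref{horizonequiv}: the conjugated matrices $\Gamma_k^{-1/2}e^{\pm TJ_k}\Gamma_k^{1/2}$ must be bounded uniformly in $k$. This is the step I expect to be the main obstacle. For $N=2$ it reduced to $|I_k|\sqrt{\alpha_k}\le\rho_k$, but in general $\Gamma_k$ is not diagonal and may be very degenerate. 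My approach is to exploit the intertwining structure of the observation map. Since $B^*e^{sA^*}\Psi_ke^{tJ_k^*}a=B^*e^{(s+t)A^*}\Psi_ka\,e^{k^2t}$, the coefficient map satisfies $\mathcal{O}_ke^{tJ_k^*}=P(t)\mathcal{O}_k$. Here $P(t)$ is the binomial shift acting on polynomial coefficients $(p_0,\dots,p_{N-1})$, namely the matrix of $p(s)\mapsto p(s+t)$, tensored with the identity on $L^2(\omega)$. It is a fixed invertible matrix, independent of $k$. Consequently
$$
\Vert\Gamma_k^{1/2}e^{tJ_k^*}a\Vert=\Vert\mathcal{O}_ke^{tJ_k^*}a\Vert\le\Vert P(t)\Vert\,\Vert\Gamma_k^{1/2}a\Vert .
$$
The same bound holds for $-t$. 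By duality through \eqref{Gdualcost}, this gives
$$
\Vert\Gamma_k^{-1/2}e^{\pm TJ_k}c\Vert\le\Vert P(\pm T)\Vert\,\Vert\Gamma_k^{-1/2}c\Vert .
$$
This yields $\kappa_T^{-1}\Vert\Gamma_k^{-1/2}c\Vert^2\le\Vert\Gamma_k^{-1/2}M_k(T)c\Vert^2\le\kappa_T\Vert\Gamma_k^{-1/2}c\Vert^2$, and it also recovers the $N=2$ case without computation.

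\emph{Step 3: duality.} For the upper bound, write each summand of the numerator as the pairing of $e^{-Tk^2+\kappa_Nk/2}\Gamma_k^{-1/2}M_k(T)c_k$ with $e^{-\kappa_Nk/2}\Gamma_k^{1/2}a_k$. Apply Cauchy--Schwarz in $\ell^2(\R^N)$, then the lower Gramian bound and density in $\mathcal{H}$. Lemma \ref{rangeHUM} then gives membership in $\mathcal{C}_T$ together with the right-hand estimate of \eqref{refinedboundsN}. Finiteness of that series places $y^0$ in $W_{-T}$ because $\Gamma_k$ is bounded.

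For the lower bound, take $y^0\in\mathcal{C}_T$, a finite set $F$, and the test vector $a_k=ke^{-Tk^2}\Gamma_k^{-1}M_k(T)c_k$ for $k\in F$. This makes the numerator equal to $S_F=\sum_F ke^{-2Tk^2}\Vert\Gamma_k^{-1/2}M_kc_k\Vert^2$. The upper Gramian bound gives $\langle G_Tz_F,z_F\rangle\le C_{T,N}S_F$, and \eqref{dualCT} then yields $\Vert y^0\Vert_{\mathcal{C}_T}^2\ge C_{T,N}^{-1}S_F$. Letting $F$ exhaust the integers and using Step 2 proves the left-hand estimate.

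The inclusions follow as before. Dropping the factor $k\ge1$ gives $\mathcal{C}_T\hookrightarrow\mathcal{V}_T$, and the bound $e^{\kappa_Nk}\le C_\varepsilon e^{2\varepsilon k^2}$ gives $\mathcal{V}_{T-\varepsilon}\hookrightarrow\mathcal{C}_T$.
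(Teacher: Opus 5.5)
Your proposal is correct and follows the paper's proof essentially step for step. It uses the Gramian sandwich from Proposition~\ref{twosidedmoment} with $X=L^2(\omega)$, the numerator identity with $M_k(T)=e^{TJ_k}$, Cauchy--Schwarz for the upper bound, and the test vectors $a_k=ke^{-Tk^2}\Gamma_k^{-1}M_k(T)c_k$ for the lower bound. The step you single out as the main obstacle is handled the same way in the paper: your intertwining identity $\mathcal{O}_ke^{tJ_k^*}=P(t)\mathcal{O}_k$, followed by duality through \eqref{Gdualcost}, is exactly Lemma~\ref{translationN} (translation of the block polynomial).
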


This is Theorem \ref{main} with the scalar block weights replaced by the matrix weight $\Gamma_k^{-1}$. The heat factors and the subquadratic gap between $k$ and $e^{\kappa_Nk}$ are unchanged, the constant $\kappa_N$ being again the one of Proposition \ref{twosidedmoment}, which depends only on $N$. Before giving its proof in Subsection \ref{proofmainN}, we derive the modal critical times and compare $\mathcal{C}_T$ with the energy and heat scales in Subsections \ref{modaltimesN} and \ref{heatcomparisonN}, as in the two-equation case.

\begin{remark}[A sharper one-block estimate]\label{oneblockestimateN}
As in Remark \ref{oneblockestimate}, with a constant $c_T'>0$ independent of $k$, the factor $k$ in the necessary estimate improves to $k^2$ for data supported in a single block:
\begin{equation}\label{singleblockN}
\Vert\Phi_kc\Vert_{\mathcal{C}_T}^2 \geq c_T' k^2e^{-2Tk^2}\Vert\Gamma_k^{-1/2}c\Vert_{\R^N}^2
\end{equation}
for every $k\geq1$ and $c\in\R^N$. Indeed, applying \eqref{singlemoment} to \eqref{blockobservationN} gives the bound \eqref{singleN} below. Restricting \eqref{dualCT} to $z=\Psi_ka$ and optimizing over $a$, using \eqref{Gdualcost} and \eqref{semigroupblocksN}, gives \eqref{singleblockN} by the weighted translation estimate of Lemma \ref{translationN}, proved in Subsection \ref{proofmainN}. This estimate will be used to prove noninclusions at subcritical times.
\end{remark}

\subsection{Modal critical times and the energy space}\label{modaltimesN}
Assume \eqref{FHN} throughout this subsection.
For a closed subspace $X\subset\mathcal{H}$, its critical time $T_X$ is defined by \eqref{def_TX}. We now identify $N$ such times, extending the comparison with the energy space in Proposition \ref{Hcomparison}.

For two equations, \eqref{coefficientidentity2} gives $\Gamma_k^{1/2}=\mathrm{diag}(\rho_k,\sqrt{\alpha_k})$. Its eigenvalues are therefore $\rho_k$ and $\sqrt{\alpha_k}\asymp1$. After decreasing rearrangement, they satisfy $\mu_{1,k}\asymp1$ and $\mu_{2,k}\asymp\rho_k$, since $\rho_k$ is uniformly bounded. Formula \eqref{Tstarintro} gives the corresponding times $0$ and $T^\star$.

For $N$ equations, let
\begin{equation}\label{singularvaluesN}
\mu_{1,k}\geq\mu_{2,k}\geq\cdots\geq\mu_{N,k}>0
\end{equation}
be the eigenvalues of the positive matrix $\Gamma_k^{1/2}$. Equivalently, they are the singular values of $\mathcal{O}_k$, since $\Gamma_k=\mathcal{O}_k^*\mathcal{O}_k$ by \eqref{coefficientmatrixN}. Choose an orthonormal eigenbasis $(v_{1,k},\ldots,v_{N,k})$ such that $\Gamma_kv_{i,k}=\mu_{i,k}^2v_{i,k}$, and define
\begin{equation}\label{def_tau_i}
\tau_i=\limsup_{k\to +\infty}\frac{-\log\mu_{i,k}}{k^2}
\qquad\forall i\in\{1,\ldots,N\} .
\end{equation}
The uniform upper bound on $\Gamma_k$ and the directly observed vector in the $k$-th adjoint block show that
\begin{equation}\label{orderedtimesN}
0=\tau_1\leq\tau_2\leq\cdots\leq\tau_N\leq+\infty.
\end{equation}
Indeed, \eqref{flagnormalizationN} gives $\Psi_ke_N=\varphi_ke_N$, and $J_k^*e_N=0$ since $J_k$ is strictly upper triangular. Hence
$\langle\Gamma_ke_N,e_N\rangle_{\R^N} = \Vert C_{k,0}e_N\Vert_{L^2(\omega)}^2 = \alpha_k \geq m_\omega$.
Since $e_N$ is a unit vector, $m_\omega\leq\mu_{1,k}^2=\Vert\Gamma_k\Vert\leq C$. Thus $\log\mu_{1,k}$ is uniformly bounded, and \eqref{def_tau_i} yields $\tau_1=0$. The remaining inequalities in \eqref{orderedtimesN} follow from the ordering \eqref{singularvaluesN} and the uniform upper bound on all the singular values.
The number $\tau_i$ records the worst subsequential quadratic decay of $\mu_{i,k}$. Along any subsequence on which $\mu_{i,k}=e^{-\lambda k^2+\mathrm{o}(k^2)}$, the $i$-th modal direction is observed with that exponentially small amplitude and, by duality, the dual coefficient weight has size $e^{2\lambda k^2+\mathrm{o}(k^2)}$. At horizon $T$, the corresponding one-block control norm has Gaussian order $e^{(\lambda-T)k^2+\mathrm{o}(k^2)}$, by \eqref{refinedboundsN} and \eqref{singleblockN}. Thus \eqref{def_tau_i} selects the transition value.

For $1\leq i\leq N$, define the closed modal subspace
\begin{equation}\label{XiN}
X_i=\Big\{\sum_{k\geq1}d_k\Phi_kv_{i,k}\ \mid\ (d_k)\in\ell^2\Big\}.
\end{equation}
Thus $X_i$ selects in every block the $i$-th singular direction, from the best observed direction $i=1$ to the worst one $i=N$. The blockwise orthogonal change of coordinates preserves the Riesz property, so
$\mathcal{H}=X_1\oplus\cdots\oplus X_N$. On $X_i$, \eqref{VtauN} becomes
\begin{equation}\label{XinormN}
\Big\Vert\sum_kd_k\Phi_kv_{i,k}\Big\Vert_{\mathcal{V}_\tau}^2
=\sum_{k\geq1}e^{-2\tau k^2}\mu_{i,k}^{-2}\vert d_k\vert^2.
\end{equation}
The modal spaces depend on the chosen eigenvectors. The statements below hold for every such choice; the effect of changing block coordinates is discussed at the end of this subsection.

\paragraph{Critical-time interpretation.}
The next theorem turns the observation loss into a precise controllability statement. Crossing $\tau_i$ makes the entire Hilbert subspace $X_i$ uniformly null controllable; below $\tau_i$, even inclusion of $X_i$ in the controllable-data space fails.

\begin{theorem}[Modal critical times]\label{modalcriticalN}
Assume \eqref{FHN}. For the modal spaces $X_i$ defined by \eqref{XiN}, one has, for every $1\leq i\leq N$,
\begin{align*}
T>\tau_i&\ \Rightarrow\ X_i\hookrightarrow\mathcal{C}_T,\\
T<\tau_i&\ \Rightarrow\ X_i\not\subset\mathcal{C}_T.
\end{align*}
\end{theorem}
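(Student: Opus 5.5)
Both implications follow from Theorem \ref{mainN}, Remark \ref{oneblockestimateN} and the diagonal form \eqref{XinormN} of the $\mathcal{V}_\tau$ norm on $X_i$. Since $X_i$ is closed in $\mathcal{H}$, it carries the norm $\Vert\sum_kd_k\Phi_kv_{i,k}\Vert_{\mathcal{H}}\asymp\Vert(d_k)\Vert_{\ell^2}$. This uses the Riesz property of $(\Phi_{j,k})$ and the orthonormality of $(v_{i,k})_i$ in each block.

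\emph{Sufficiency.} Let $T>\tau_i$ and pick $\tau$ with $\tau_i<\tau<T$. By definition \eqref{def_tau_i} of the $\limsup$, one has $-\log\mu_{i,k}\leq\tau k^2$ for all $k\geq k_0$, that is, $e^{-2\tau k^2}\mu_{i,k}^{-2}\leq1$ there. The finitely many remaining $k$ have $\mu_{i,k}>0$ by \eqref{FHN}. So the weights in \eqref{XinormN} are bounded, and $X_i\hookrightarrow\mathcal{V}_\tau$ continuously. Theorem \ref{mainN}, applied with $\varepsilon=T-\tau$, gives $\mathcal{V}_\tau\hookrightarrow\mathcal{C}_T$. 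Composing the two embeddings yields $X_i\hookrightarrow\mathcal{C}_T$.

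\emph{Necessity.} Let $T<\tau_i$ and suppose, for contradiction, that $X_i\subset\mathcal{C}_T$ as sets.
\begin{enumerate}
\item Both $X_i$ and $\mathcal{C}_T$ embed continuously in $W_{-T}$. Convergence in either space therefore forces convergence of all spectral coefficients, so the inclusion has closed graph and is continuous by the closed graph theorem, as in Proposition \ref{Hcomparison}. This gives $\Vert y\Vert_{\mathcal{C}_T}\leq C\Vert y\Vert_{\mathcal{H}}$ on $X_i$.
\item Test this on the single-block vectors $y_k=\Phi_kv_{i,k}\in X_i$, whose $\mathcal{H}$ norms are uniformly bounded. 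Apply the one-block estimate \eqref{singleblockN} with $c=v_{i,k}$, using $\Vert\Gamma_k^{-1/2}v_{i,k}\Vert^2=\mu_{i,k}^{-2}$. This gives $c_T'k^2e^{-2Tk^2}\mu_{i,k}^{-2}\leq C$, hence $-\log\mu_{i,k}\leq Tk^2+O(1)$.
\item Dividing by $k^2$ and taking the $\limsup$ gives $\tau_i\leq T$, a contradiction. Hence $X_i\not\subset\mathcal{C}_T$.
\end{enumerate}
The lower bound in \eqref{refinedboundsN}, applied to one block, would serve equally well, since only the Gaussian order matters.

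The only delicate point is that \eqref{singleblockN} and the equivalence of $\mathcal{V}_\tau$ norms rely on the weighted translation estimate (Lemma \ref{translationN}). That lemma is what absorbs the triangular factor $e^{TJ_k}$ into the weight $\Gamma_k^{-1}$ uniformly in $k$. Here it is taken as given from Theorem \ref{mainN} and Remark \ref{oneblockestimateN}, so no new obstacle arises. The argument is the modal analogue of the proof of Proposition \ref{Hcomparison}, and it works for every choice of eigenbasis $(v_{i,k})$: we only use $\Gamma_kv_{i,k}=\mu_{i,k}^2v_{i,k}$ and the blockwise orthonormality.
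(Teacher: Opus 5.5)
Your proof is correct and follows the paper's argument essentially step for step. For $\tau_i<\tau<T$ you get sufficiency from $X_i\hookrightarrow\mathcal{V}_\tau\hookrightarrow\mathcal{C}_T$ via \eqref{XinormN} and Theorem \ref{mainN}. You get necessity from the closed graph theorem together with the one-block estimate \eqref{singleblockN} tested on $\Phi_kv_{i,k}$, exactly as in the paper.
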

No general statement is made when $T=\tau_i$.

\begin{proof}
If $T>\tau_i$, choose $\tau$ with $\tau_i<\tau<T$. Definition \eqref{def_tau_i} and identity \eqref{XinormN} give $X_i\hookrightarrow\mathcal{V}_\tau$, and Theorem \ref{mainN} gives $X_i\hookrightarrow\mathcal{C}_T$.

Conversely, suppose that $T<\tau_i$ and $X_i\subset\mathcal{C}_T$. Both spaces embed continuously in $W_{-T}$, so the inclusion is continuous by the closed graph theorem. Applied to $\Phi_kv_{i,k}$, Remark \ref{oneblockestimateN} gives, by \eqref{singleblockN},
$1\geq ck^2e^{-2Tk^2}\mu_{i,k}^{-2}$ after absorbing the inclusion norm into $c$. Hence $\tau_i\leq T$, a contradiction.
\end{proof}

\begin{remark}[Comparison with the energy space]
The conclusions of Proposition \ref{Hcomparison} for $\mathcal{C}_T$ extend with $\tau_N$ in place of $T^\star$. Indeed, since $\mu_{1,k}\asymp1$, the datum with coefficients $c_k=e^{Tk^2-\kappa_Nk}v_{1,k}/k$ satisfies the sufficient condition in \eqref{refinedboundsN} but does not belong to $\mathcal{H}$. Thus $\mathcal{C}_T$ is never contained in $\mathcal{H}$. If $T>\tau_N$, all $X_i$ embed continuously in $\mathcal{C}_T$, so $\mathcal{H}\hookrightarrow\mathcal{C}_T$ continuously and properly. If $T<\tau_N$, Theorem \ref{modalcriticalN} gives $X_N\not\subset\mathcal{C}_T$, and $\mathcal{H}$ and $\mathcal{C}_T$ are incomparable. The case $T=\tau_N$ remains undecided.
\end{remark}

\begin{remark}[Nested modal subspaces]
For $1\leq m\leq N$, set $Y_m=X_1\oplus\cdots\oplus X_m$. Then
$Y_1\subset\cdots\subset Y_N=\mathcal{H}$ and $T_{Y_m}=\tau_m$.
Indeed, the critical time of a finite direct sum is the largest critical time of its summands. In particular, $X_1\hookrightarrow\mathcal{C}_T$ for every $T>0$. If $1\leq m<N$ and $\tau_m<T<\tau_{m+1}$, then $Y_m\hookrightarrow\mathcal{C}_T$ whereas $Y_{m+1}\not\subset\mathcal{C}_T$: passing the modal thresholds produces successive gains of uniform controllability for whole nested subspaces. Coincident values correspond to simultaneous gains; the cases $T=\tau_m$ remain undecided.
\end{remark}

\begin{remark}[Choice of block coordinates]
The times $\tau_i$ and the coefficient cost are ``intrinsic'' in the sense that they are invariant under uniformly conditioned changes of dual block coordinates. Indeed, if $a=P_k\widetilde a$, then $\widetilde\Psi_k=\Psi_kP_k$ and $\widetilde\Gamma_k=P_k^*\Gamma_kP_k$. The dual primal map is $\widetilde\Phi_k=\Phi_kP_k^{-*}$, so the same primal vector has coordinates $\widetilde c=P_k^*c$ and $\Vert\widetilde\Gamma_k^{-1/2}\widetilde c\Vert_{\R^N}^2 = \Vert\Gamma_k^{-1/2}c\Vert_{\R^N}^2$.
If $P_k$ and $P_k^{-1}$ are uniformly bounded, the heat norms remain equivalent. The min-max principle makes the ordered eigenvalues of $\widetilde\Gamma_k^{1/2}$ and $\Gamma_k^{1/2}$ comparable up to constants independent of $k$, so \eqref{def_tau_i} gives the same times.

The individual spaces $X_i$ depend on the normalized block coordinates and on the chosen eigenvectors. For a multiple eigenvalue, only the sum of the corresponding modal directions is independent of rotations within its eigenspace. Moreover, $X_i$ is generally not invariant under the semigroup, since $v_{i,k}$ need not span a $J_k$-invariant line. Theorem \ref{modalcriticalN} holds for every admissible choice.
\end{remark}

\subsection{Directional and isotropic heat scales}\label{heatcomparisonN}
The analogue of Corollary \ref{Wphase} now distinguishes the singular directions. For $\sigma\in\R$ and $1\leq i\leq N$, define
\begin{equation}\label{directionalheatN}
\mathcal{W}_\sigma^{(i)}
=\Big\{\sum_{k\geq1}d_k\Phi_kv_{i,k}\ \mid\  \sum_{k\geq1}e^{2\sigma k^2}\vert d_k\vert^2<+\infty\Big\}.
\end{equation}
In \eqref{directionalheatN}, $\mathcal{W}_0^{(i)}=X_i$, while $\sigma>0$ imposes heat regularity along the $i$-th singular direction.

\begin{corollary}[Directional and isotropic heat scales]
Assume \eqref{FHN}, and let $T>0$ and $\sigma\geq-T$.
For every $1\leq i\leq N$,
\begin{align}
T+\sigma>\tau_i&\ \Rightarrow\ \mathcal{W}_\sigma^{(i)}\hookrightarrow\mathcal{C}_T,\label{directionalpositiveN}\\
T+\sigma<\tau_i&\ \Rightarrow\ \mathcal{W}_\sigma^{(i)}\not\subset\mathcal{C}_T.\label{directionalnegativeN}
\end{align}
Moreover,
\begin{equation}\label{directionaldecompositionN}
W_\sigma=\mathcal{W}_\sigma^{(1)}\oplus\cdots\oplus\mathcal{W}_\sigma^{(N)},
\end{equation}
and therefore 
\begin{align}
T+\sigma>\tau_N&\ \Rightarrow\ W_\sigma\hookrightarrow\mathcal{C}_T,\label{isotropicpositiveN}\\
T+\sigma<\tau_N&\ \Rightarrow\ W_\sigma\not\subset\mathcal{C}_T.\label{isotropicnegativeN}
\end{align}
In particular, $T_{\mathcal{H}}=\tau_N$. No general conclusion is asserted when $T+\sigma=\tau_i$.
\end{corollary}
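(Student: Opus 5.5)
The plan is to reduce everything to the two estimates of Theorem \ref{mainN} and Remark \ref{oneblockestimateN}, exactly as in the proof of Theorem \ref{modalcriticalN}, with the heat weight $e^{2\sigma k^2}$ inserted.

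\emph{Step 1: the directional statements.} On $\mathcal{W}_\sigma^{(i)}$, identity \eqref{XinormN} gives
$$
\Big\Vert\sum_kd_k\Phi_kv_{i,k}\Big\Vert_{\mathcal{V}_\tau}^2=\sum_{k\geq1}e^{-2(\tau+\sigma)k^2}\mu_{i,k}^{-2}\,e^{2\sigma k^2}\vert d_k\vert^2 .
$$
If $T+\sigma>\tau_i$, we choose $\tau<T$ with $\tau+\sigma>\tau_i$. By \eqref{def_tau_i}, $\mu_{i,k}^{-2}\leq C_\tau e^{2(\tau+\sigma)k^2}$ for all $k$, since the limsup is exceeded and finitely many $k$ are handled by $\mu_{i,k}>0$. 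Hence $\mathcal{W}_\sigma^{(i)}\hookrightarrow\mathcal{V}_\tau$, and Theorem \ref{mainN} gives \eqref{directionalpositiveN}.

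Conversely, assume $T+\sigma<\tau_i$ and $\mathcal{W}_\sigma^{(i)}\subset\mathcal{C}_T$. Since $\sigma\geq-T$, both spaces embed continuously in $W_{-T}$; indeed $\mathcal{W}_\sigma^{(i)}\hookrightarrow W_\sigma\hookrightarrow W_{-T}$ because the $v_{i,k}$ are unit vectors. The closed graph theorem then makes the inclusion continuous. We test it on $\Phi_kv_{i,k}$, whose $\mathcal{W}_\sigma^{(i)}$ norm is $e^{\sigma k^2}$. By \eqref{singleblockN} and $\Vert\Gamma_k^{-1/2}v_{i,k}\Vert^2=\mu_{i,k}^{-2}$, we obtain $ck^2e^{-2Tk^2}\mu_{i,k}^{-2}\leq e^{2\sigma k^2}$, that is, $-\log\mu_{i,k}\leq(T+\sigma)k^2+O(1)$. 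This forces $\tau_i\leq T+\sigma$, a contradiction, which proves \eqref{directionalnegativeN}.

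\emph{Step 2: the decomposition \eqref{directionaldecompositionN}.} For $y=\sum_k\Phi_kc_k$, we write $c_k=\sum_i\langle c_k,v_{i,k}\rangle v_{i,k}$. Because $(v_{i,k})_i$ is orthonormal, $\Vert c_k\Vert^2=\sum_i\vert\langle c_k,v_{i,k}\rangle\vert^2$. Therefore the $W_\sigma$ norm squared is exactly the sum of the $\mathcal{W}_\sigma^{(i)}$ norms squared of the components $d_k^{(i)}=\langle c_k,v_{i,k}\rangle$. The sum is thus direct, even orthogonal, and topological.

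\emph{Step 3: the isotropic statements.} If $T+\sigma>\tau_N\geq\tau_i$ for every $i$, each summand embeds continuously in $\mathcal{C}_T$ by Step 1, so the finite direct sum does too; this gives \eqref{isotropicpositiveN}. If $T+\sigma<\tau_N$, then $\mathcal{W}_\sigma^{(N)}\subset W_\sigma$ is not contained in $\mathcal{C}_T$, which gives \eqref{isotropicnegativeN}.

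Finally, take $\sigma=0$. Then $W_0=\mathcal{H}$, and we get $\mathcal{H}\hookrightarrow\mathcal{C}_T$ for $T>\tau_N$ and $\mathcal{H}\not\subset\mathcal{C}_T$ for $T<\tau_N$. By definition \eqref{def_TX}, $T_{\mathcal{H}}=\tau_N$; the case $\tau_N=+\infty$ is consistent with the convention.

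The only delicate point is the closed-graph step, which needs $\sigma\geq-T$ so that $\mathcal{W}_\sigma^{(i)}$ lies in the ambient space $W_{-T}$. Everything else is a direct transcription of the proof of Theorem \ref{modalcriticalN}.
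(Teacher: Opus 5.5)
Your proof is correct and follows the paper's argument essentially step by step. The positive direction uses the embedding $\mathcal{W}_\sigma^{(i)}\hookrightarrow\mathcal{V}_\tau$ via \eqref{XinormN} together with Theorem \ref{mainN}, the negative direction uses the closed graph theorem and the one-block estimate \eqref{singleblockN} tested on $\Phi_kv_{i,k}$, and the isotropic statements follow from the blockwise orthogonal decomposition. You also spell out that $\sigma\geq-T$ places $\mathcal{W}_\sigma^{(i)}$ inside $W_{-T}$, which the closed-graph step needs and which the paper leaves implicit.
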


\begin{proof}
If $T+\sigma>\tau_i$, choose $\tau<T$ with $\tau+\sigma>\tau_i$. Equations \eqref{def_tau_i} and \eqref{XinormN} give $\mathcal{W}_\sigma^{(i)}\hookrightarrow\mathcal{V}_\tau$, and Theorem \ref{mainN} concludes. Conversely, if $\mathcal{W}_\sigma^{(i)}\subset\mathcal{C}_T$, the closed graph theorem makes the inclusion continuous. Applying \eqref{singleblockN} to $\Phi_kv_{i,k}$ gives
$e^{2\sigma k^2}\geq ck^2e^{-2Tk^2}\mu_{i,k}^{-2}$, up to a fixed multiplicative constant, and hence $\tau_i\leq T+\sigma$.
The blockwise orthogonal decomposition in the norm \eqref{heatspaceN} gives \eqref{directionaldecompositionN}; no single directional space equals $W_\sigma$ when $N>1$. Since $\tau_N$ is the largest of the modal times, \eqref{directionalpositiveN} for all $i$ proves \eqref{isotropicpositiveN}, while \eqref{directionalnegativeN} with $i=N$ proves \eqref{isotropicnegativeN}. Taking $\sigma=0$ identifies the infimum $T_{\mathcal{H}}$.
\end{proof}

For $N=2$, we recover Corollary \ref{Wphase}, since $\tau_2=T^\star$. For general $N$, the new information is the individual threshold $\tau_i$ on each directional scale.

\subsection{Proof of Theorem \ref{mainN}}\label{proofmainN}
We follow the two steps of Subsection \ref{proofmain}, with $\R^N$ in place of $\R^2$.

\paragraph{The adjoint observation in block coordinates.}
For a finitely supported sequence $a=(a_k)_{k\geq1}$ in $\R^N$, set $z[a]=\sum_{k\geq1}\Psi_ka_k$. Formula \eqref{blockobservationN} gives
\begin{equation}\label{globalobservationN}
B^*e^{sA^*}z[a] = \sum_{k\geq1}e^{-k^2s}\sum_{m=0}^{N-1}s^m A_{m,k},\qquad A_{m,k}=C_{k,m}a_k,
\end{equation}
for any $s\in[0,T]$.
This is \eqref{globalobservation2} with $N$ polynomial coefficients instead of two. By \eqref{coefficientidentityN},
$$
\sum_{m=0}^{N-1}\Vert A_{m,k}\Vert_{L^2(\omega)}^2 = \big\Vert\Gamma_k^{1/2}a_k\big\Vert_{\R^N}^2.
$$
We now repeat the passage from \eqref{coefficientidentity2} to \eqref{gramiancomparison2}.

\begin{lemma}[Frequency-diagonal estimates for the global Gramian]\label{blockN}
Let $\kappa_N$ be the constant in Proposition \ref{twosidedmoment}. For every $T>0$, there are positive constants $c_{T,N}$ and $C_{T,N}$ such that
\begin{multline}\label{doubleN}
c_{T,N}\sum_{k\geq1}e^{-\kappa_Nk}\big\Vert\Gamma_k^{1/2}a_k\big\Vert_{\R^N}^2 \leq \langle G_Tz[a],z[a]\rangle_{\mathcal{H}}\\
= \int_0^T\big\Vert B^*e^{sA^*}z[a]\big\Vert_{L^2(\omega)}^2\,ds \leq C_{T,N}\sum_{k\geq1}\frac{1}{k}\big\Vert\Gamma_k^{1/2}a_k\big\Vert_{\R^N}^2
\end{multline}
for every finitely supported sequence $a$. For a single block,
\begin{equation}\label{singleN}
\langle G_T\Psi_ka,\Psi_ka\rangle_{\mathcal{H}} \leq \frac{C}{k^2}\big\Vert\Gamma_k^{1/2}a\big\Vert_{\R^N}^2,
\end{equation}
where $C>0$ is independent of $T$ and $k$.
\end{lemma}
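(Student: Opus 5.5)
This lemma is the $N$-component counterpart of \eqref{gramiancomparison2} and of the single-block bound in Remark \ref{oneblockestimate}. The plan is to apply Proposition \ref{twosidedmoment} of Appendix \ref{appendixmoments} directly, with multiplicity $N$ and Hilbert space $X=L^2(\omega)$, to the vector-valued function
$$
s\mapsto B^*e^{sA^*}z[a]=\sum_{k\geq1}e^{-k^2s}\sum_{m=0}^{N-1}s^mA_{m,k},
$$
which is exactly of the form \eqref{globalobservationN}. The Gramian identity \eqref{gramianobservationidentity} holds for the present cascade. It identifies $\langle G_Tz[a],z[a]\rangle_{\mathcal{H}}$ with $\int_0^T\Vert B^*e^{sA^*}z[a]\Vert_{L^2(\omega)}^2\,ds$, which gives the middle equality in \eqref{doubleN}.

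First, the two-sided estimate \eqref{twosidedmomentineq} of Proposition \ref{twosidedmoment} has the following form for finitely supported coefficient families $(A_{m,k})$ in $X$:
$$
c_{T,N}\sum_ke^{-\kappa_Nk}\sum_{m<N}\Vert A_{m,k}\Vert_X^2\leq\int_0^T\Big\Vert\sum_ke^{-k^2s}\sum_{m<N}s^mA_{m,k}\Big\Vert_X^2ds\leq C_{T,N}\sum_k\frac1k\sum_{m<N}\Vert A_{m,k}\Vert_X^2.
$$
Second, I substitute $A_{m,k}=C_{k,m}a_k$. The coefficient identity \eqref{coefficientidentityN} turns $\sum_m\Vert A_{m,k}\Vert^2$ into $\Vert\Gamma_k^{1/2}a_k\Vert_{\R^N}^2$, and this yields \eqref{doubleN} with the constants of the proposition.

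Third, for \eqref{singleN}, I take $a$ supported at the single index $k$ and apply the single-frequency bound \eqref{singlemoment}, which is independent of $T$. The reason it is independent of $T$ is that $\int_0^\infty s^{2m}e^{-2k^2s}\,ds\lesssim k^{-2-4m}\leq Ck^{-2}$. Combined with \eqref{coefficientidentityN} and Cauchy--Schwarz over the $N$ terms, this gives $C/k^2$ uniformly in $k$ and $T$.

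There is no real obstacle, because all the analysis sits in Proposition \ref{twosidedmoment}. The one point to check is that the proposition is stated for vector coefficients in an arbitrary Hilbert space $X$ with the same monomial basis $s^m$ (not normalized, say, by $1/m!$). If the appendix uses a different normalization, it only changes constants depending on $N$, because the passage between the two coefficient families is a fixed invertible diagonal matrix. I also need that $\Gamma_k$ is uniformly bounded, which was noted after \eqref{coefficientidentityN}; this lets the estimates extend by truncation if needed.
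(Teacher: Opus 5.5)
Your proposal is correct and follows the paper's own proof. The paper likewise substitutes \eqref{globalobservationN} into the Gramian identity \eqref{gramianobservationidentity} and applies Proposition \ref{twosidedmoment} with $X=L^2(\omega)$. It then converts via \eqref{coefficientidentityN} and uses \eqref{singlemoment}, with the same unnormalized monomials $s^m$, for the single-block bound \eqref{singleN}.
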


\begin{proof}
Substitute \eqref{globalobservationN} into \eqref{gramianobservationidentity} and apply Proposition \ref{twosidedmoment} with $X=L^2(\omega)$. Identity \eqref{coefficientidentityN} turns \eqref{twosidedmomentineq} into \eqref{doubleN}, and \eqref{singlemoment} gives \eqref{singleN}. These estimates do not require \eqref{FHN}.
\end{proof}

As in the observation step of Subsection \ref{proofmain}, completeness of the adjoint Riesz basis and the uniform upper bound on $\Gamma_k$ allow spectral truncation to extend \eqref{doubleN} in Lemma \ref{blockN} to every $a\in\ell^2(\R^N)$. The sums are diagonal in $k$: the moment estimate handles interactions between frequencies, while $\Gamma_k$ retains the form on the coefficients within each block.

\begin{remark}[Comparison with the diagonal blocks of the Gramian]
Let $S:\ell^2(\R^N)\to\mathcal{H}$ be the map $Sa=\sum_{k\geq1}\Psi_ka_k$. The matrix $S^*G_TS$ represents the quadratic form of $G_T$ in the adjoint Riesz coordinates. Its diagonal blocks are $G_{T,k} = \Psi_k^*G_T\Psi_k$, so $G_{T,k}$ represents the restriction of this form to the generalized eigenspace $E_k^*$ of $A^*$. This interpretation uses a congruence, since the Riesz basis need not be orthonormal.

Fix $T>0$. Formula \eqref{blockobservationN} and the change of variable $r=k^2s$ give, for every $b\in\R^N$,
\begin{multline*}
\langle G_{T,k}b,b\rangle_{\R^N} = \int_0^T \Vert B^*e^{sA^*}\Psi_kb\Vert_{L^2(\omega)}^2\,ds
= \frac{1}{k^2}\int_0^{k^2T}e^{-2r}\Big\Vert\sum_{m=0}^{N-1}r^mk^{-2m}C_{k,m}b\Big\Vert_{L^2(\omega)}^2\,dr \\
\asymp_{T,N} \frac{1}{k^2}\sum_{m=0}^{N-1}k^{-4m}\Vert C_{k,m}b\Vert_{L^2(\omega)}^2.
\end{multline*}
The equivalence is uniform in $k$ and $b$: the integral over $[0,k^2T]$ is bounded below by that over $[0,T]$ and above by that over $[0,+\infty)$, and the corresponding scalar polynomial moment matrices are positive definite. Together with \eqref{coefficientidentityN}, this yields
$$
c_{T,N}k^{-(4N-2)}\Gamma_k \leq G_{T,k} \leq C_Nk^{-2}\Gamma_k
$$
in the sense of quadratic forms.
Consequently, \eqref{doubleN} implies that, for every finitely supported sequence $a$,
$$
c_{T,N}'\sum_{k\geq1}k^2e^{-\kappa_Nk}\langle G_{T,k}a_k,a_k\rangle_{\R^N}
\leq \langle G_Tz[a],z[a]\rangle_{\mathcal{H}}
\leq C_{T,N}'\sum_{k\geq1}k^{4N-3}\langle G_{T,k}a_k,a_k\rangle_{\R^N}.
$$
Thus the moment estimates compare the full Gramian with weighted sums of its diagonal blocks. They control the collective effect of the off-diagonal blocks through the displayed weights; no uniform equivalence with the unweighted block-diagonal form is asserted. In the proof, the matrices $\Gamma_k$ provide an explicit description in terms of the observed polynomial coefficients, independent of $T$ and suited to the duality argument.
\end{remark}

\paragraph{The numerator and duality argument.}
For $y^0\in W_{-T}$, write $y^0=\sum_{k\geq1}\Phi_kc_k$ and set $M_k(T)=e^{TJ_k}$. Formula \eqref{semigroupblocksN} and biorthogonality give
\begin{equation}\label{numeratorN}
\langle e^{TA}y^0,z[a]\rangle_{\mathcal{H}} = \sum_{k\geq1}e^{-Tk^2}\langle M_k(T)c_k,a_k\rangle_{\R^N}.
\end{equation}
This is the same numerator as \eqref{numerator2}. The next lemma generalizes the weighted triangular estimate \eqref{horizonequiv}; it is the only additional step needed before repeating the duality proof.

\begin{lemma}[Translation of the block polynomial]\label{translationN}
Assume \eqref{FHN}. For every fixed $T\in\R$, there exists $\kappa_T\geq1$ such that
\begin{equation}\label{translationequiv}
\kappa_T^{-1}\Vert\Gamma_k^{-1/2}c\Vert_{\R^N}^2 \leq \Vert\Gamma_k^{-1/2}M_k(T)c\Vert_{\R^N}^2 \leq \kappa_T\Vert\Gamma_k^{-1/2}c\Vert_{\R^N}^2
\end{equation}
for every $k\geq1$ and $c\in\R^N$.
\end{lemma}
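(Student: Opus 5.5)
The plan is to reduce \eqref{translationequiv} to a statement about translating the observed polynomial in \eqref{blockobservationN}. That translation acts on coefficient vectors by a fixed binomial matrix, independent of $k$.

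\emph{Step 1: dualize.} By \eqref{Gdualcost} and \eqref{FHN}, for every $c\in\R^N$,
$$
\Vert\Gamma_k^{-1/2}M_k(T)c\Vert_{\R^N}=\sup_{a\neq0}\frac{\vert\langle c,M_k(T)^*a\rangle_{\R^N}\vert}{\Vert\Gamma_k^{1/2}a\Vert_{\R^N}}
=\sup_{b\neq0}\frac{\vert\langle c,b\rangle_{\R^N}\vert}{\Vert\Gamma_k^{1/2}e^{-TJ_k^*}b\Vert_{\R^N}}.
$$
Here we substituted $b=M_k(T)^*a=e^{TJ_k^*}a$, which is legitimate because $e^{TJ_k^*}$ is invertible with inverse $e^{-TJ_k^*}$. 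Comparing with the same formula for $\Vert\Gamma_k^{-1/2}c\Vert_{\R^N}$, it suffices to prove that for every $t\in\R$ there is $K_t\geq1$, independent of $k$, such that
$$
\Vert\Gamma_k^{1/2}e^{tJ_k^*}b\Vert_{\R^N}\leq K_t\Vert\Gamma_k^{1/2}b\Vert_{\R^N}
\qquad\forall k\geq1,\ \forall b\in\R^N.
$$
Applying this bound with $t=-T$ gives the left inequality in \eqref{translationequiv}, and applying it with $t=T$ (to $e^{-TJ_k^*}b$) gives the right one. One can take $\kappa_T=\max(K_T,K_{-T})^2$.

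\emph{Step 2: interpret via the polynomial.} By \eqref{coefficientidentityN}, $\Vert\Gamma_k^{1/2}b\Vert_{\R^N}=\Vert\mathcal{O}_kb\Vert_{L^2(\omega)^N}$, which is the norm of the coefficient vector of the polynomial
$$
p_b(s)=B^*\Psi_ke^{sJ_k^*}b=\sum_{m=0}^{N-1}s^mC_{k,m}b,
$$
whose degree is less than $N$ since $J_k^N=0$. By the group property, $B^*\Psi_ke^{sJ_k^*}e^{tJ_k^*}b=p_b(s+t)$. Hence $\mathcal{O}_ke^{tJ_k^*}b$ is the coefficient vector of the translated polynomial $s\mapsto p_b(s+t)$.

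\emph{Step 3: the translation matrix.} For polynomials of degree less than $N$ with coefficients in $X=L^2(\omega)$, the coefficients of $p(\cdot+t)$ are obtained from those of $p$ by $\Theta_t\otimes\Id_X$. Here $\Theta_t$ is the $N\times N$ upper triangular binomial matrix with entries $\binom{m'}{m}t^{m'-m}$ for $m\leq m'$. Thus
$$
\Vert\mathcal{O}_ke^{tJ_k^*}b\Vert_{L^2(\omega)^N}\leq\Vert\Theta_t\Vert\,\Vert\mathcal{O}_kb\Vert_{L^2(\omega)^N},
$$
and $K_t=\max(1,\Vert\Theta_t\Vert)$ depends only on $t$ and $N$, not on $k$, $\beta_{ij}$ or $\omega$.

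I do not expect a genuine obstacle. The one point needing care is Step 1: the duality must be set up so that the conjugation by $\Gamma_k^{\pm1/2}$ never has to be estimated directly, since a direct bound on $\Gamma_k^{-1/2}M_k(T)\Gamma_k^{1/2}$ would require knowledge of the possibly very small eigenvalues $\mu_{N,k}$. The polynomial translation argument sidesteps this entirely, and for $N=2$ it recovers the bound $\vert I_k\vert\sqrt{\alpha_k}\leq\rho_k$ used in \eqref{horizonequiv}.
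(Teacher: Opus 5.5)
Your proof is correct and follows essentially the same route as the paper. The paper also identifies $\mathcal{O}_kM_k(T)^*a$ with the coefficient vector of the translated polynomial $P_{k,a}(\cdot+T)$, bounds translation by $\pm T$ by constants depending only on $T$ and $N$, and transfers the resulting equivalence to $\Gamma_k^{-1/2}$ through \eqref{Gdualcost} with the change of variables $b=M_k(T)^*a$. Your version only makes explicit the two one-sided inequalities (with $t=\pm T$) and the binomial matrix $\Theta_t$.
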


\begin{proof}
Set $P_{k,a}(s)=\sum_{m=0}^{N-1}s^m C_{k,m}a$. The definition of the coefficient maps gives
$$
P_{k,M_k(T)^*a}(s) = P_{k,a}(s+T).
$$
Here $M_k(T)^*$ is the transpose of $M_k(T)$, consistently with the adjoint pairing in \eqref{numeratorN}.

Translation by $T$ transforms the coefficient of $s^j$ into $\sum_{m=j}^{N-1}\binom{m}{j}T^{m-j}C_{k,m}a$. This triangular map on $L^2(\omega)^N$ and its inverse, translation by $-T$, have norms depending only on $T$ and $N$. Therefore \eqref{coefficientidentityN} gives
$$
\Vert\Gamma_k^{1/2}M_k(T)^*a\Vert_{\R^N} \asymp_{T,N} \Vert\Gamma_k^{1/2}a\Vert_{\R^N}.
$$
Using \eqref{Gdualcost} and changing variables $b=M_k(T)^*a$, we obtain
$$
\Vert\Gamma_k^{-1/2}M_k(T)c\Vert_{\R^N} = \sup_{a\neq0}\frac{\vert\langle c,M_k(T)^*a\rangle_{\R^N}\vert}{\Vert\Gamma_k^{1/2}a\Vert_{\R^N}} \asymp_{T,N} \Vert\Gamma_k^{-1/2}c\Vert_{\R^N}.
$$
Squaring proves \eqref{translationequiv}. For $N=2$, translation changes $(C_{k,0}a,C_{k,1}a)$ into $(C_{k,0}a+TC_{k,1}a,C_{k,1}a)$; this is the coefficient version of the triangular correction used in \eqref{horizonequiv}.
\end{proof}

\begin{proof}[Proof of Theorem \ref{mainN}]
Assume first that the series on the right-hand side of \eqref{refinedboundsN} is finite. Since $\Gamma_k$ is uniformly bounded, the corresponding datum belongs to $W_{-T}$. Write each summand of \eqref{numeratorN} as
$$
e^{-Tk^2}\langle M_k(T)c_k,a_k\rangle_{\R^N} = \big\langle e^{-Tk^2+\kappa_Nk/2}\Gamma_k^{-1/2}M_k(T)c_k,e^{-\kappa_Nk/2}\Gamma_k^{1/2}a_k\big\rangle_{\R^N}.
$$
The Cauchy--Schwarz inequality in $\ell^2(\R^N)$ gives
$$
\vert\langle e^{TA}y^0,z[a]\rangle_{\mathcal{H}}\vert^2 \leq \Big(\sum_{k\geq1}e^{-2Tk^2+\kappa_Nk}\Vert\Gamma_k^{-1/2}M_k(T)c_k\Vert_{\R^N}^2\Big)\Big(\sum_{k\geq1}e^{-\kappa_Nk}\big\Vert\Gamma_k^{1/2}a_k\big\Vert_{\R^N}^2\Big).
$$
The first factor is finite by \eqref{translationequiv}. The lower bound in \eqref{doubleN} controls the second by $c_{T,N}^{-1}$$\langle G_Tz[a],z[a]\rangle_{\mathcal{H}}$. By density of finite spectral test vectors and continuity of the numerator and of $L_T^*$, the quotient in \eqref{rangecriterionCT} is bounded on all of $\mathcal{H}$. Lemma \ref{rangeHUM} yields $y^0\in\mathcal{C}_T$ and
$$
\Vert y^0\Vert_{\mathcal{C}_T}^2 \leq c_{T,N}^{-1}\sum_{k\geq1}e^{-2Tk^2+\kappa_Nk}\Vert\Gamma_k^{-1/2}M_k(T)c_k\Vert_{\R^N}^2.
$$
Using \eqref{translationequiv} proves the right-hand inequality in \eqref{refinedboundsN}.

For the left-hand inequality, let $y^0\in\mathcal{C}_T$ and let $F\subset\N\setminus\{0\}$ be finite. Choose $a_k=k e^{-Tk^2}\Gamma_k^{-1}M_k(T)c_k$ for $k\in F$ and $a_k=0$ otherwise, and set $z_F=z[a]$. Exactly as in the two-equation proof, \eqref{numeratorN} gives
$$
\langle e^{TA}y^0,z_F\rangle_{\mathcal{H}} = \sum_{k\in F}k e^{-2Tk^2}\Vert\Gamma_k^{-1/2}M_k(T)c_k\Vert_{\R^N}^2 = S_F,
$$
whereas the upper bound in \eqref{doubleN} gives $\langle G_Tz_F,z_F\rangle_{\mathcal{H}}\leq C_{T,N}S_F$. Formula \eqref{dualCT} implies $S_F^2\leq C_{T,N}\Vert y^0\Vert_{\mathcal{C}_T}^2S_F$. Dividing by $S_F$ if it is positive, and using \eqref{translationequiv}, yields
$$
\Vert y^0\Vert_{\mathcal{C}_T}^2 \geq C_{T,N}^{-1}S_F \geq (C_{T,N}\kappa_T)^{-1}\sum_{k\in F}k e^{-2Tk^2}\Vert\Gamma_k^{-1/2}c_k\Vert_{\R^N}^2.
$$
The same inequality holds when $S_F=0$. Take $F=\{1,\ldots,n\}$ and let $n\to+\infty$ to obtain the left-hand inequality in \eqref{refinedboundsN}. Discarding $k\geq1$ gives $\mathcal{C}_T\hookrightarrow\mathcal{V}_T$, and $e^{\kappa_Nk}\leq C_{\varepsilon}e^{2\varepsilon k^2}$ in the sufficient bound gives $\mathcal{V}_{T-\varepsilon}\hookrightarrow\mathcal{C}_T$.
\end{proof}

\subsection{The three-equation block: consecutive and direct couplings}

We illustrate the preceding spectral description and modal critical times for three equations:
\begin{equation}\label{system3}
\left\{
\begin{array}{ll}
\partial_t y_1 = \partial_{xx}y_1+\beta_{12}(x)y_2+\beta_{13}(x)y_3,\\
\partial_t y_2 = \partial_{xx}y_2+\beta_{23}(x)y_3,\\
\partial_t y_3 = \partial_{xx}y_3+1_\omega u,
\end{array}
\right.
\end{equation}
with homogeneous Dirichlet conditions. In \eqref{system3}, a term $\beta_{ij}y_j$ in the $i$-th equation is a directed coupling from component $j$ to component $i$. Thus $3\to2\to1$ is the consecutive path from the controlled component to the first one, while $\beta_{13}y_3$ adds the direct path $3\to1$.

The flag-adapted adjoint basis $(\Psi_{1,k},\Psi_{2,k},\Psi_{3,k})$ of Lemma \ref{rieszN} satisfies
\begin{equation}\label{J3coordinates}
\begin{aligned}
(A^*+k^2)\Psi_{1,k}&=\eta_{12,k}\Psi_{2,k}+\eta_{13,k}\Psi_{3,k},\\
(A^*+k^2)\Psi_{2,k}&=\eta_{23,k}\Psi_{3,k},\\
(A^*+k^2)\Psi_{3,k}&=0.
\end{aligned}
\end{equation}
Relations \eqref{J3coordinates} generalize the second line of \eqref{Jordanrelations}; the adapted bases are chosen real, so that the coefficients $\eta_{ij,k}$ are the entries $(J_k)_{ij}$ of the real matrix $J_k$. The coefficients $\eta_{12,k}$ and $\eta_{23,k}$ are the consecutive coupling moments: by the superdiagonal formula, $\eta_{12,k}=\int_0^\pi\beta_{12}\varphi_k^2\,dx$ and $\eta_{23,k}=\int_0^\pi\beta_{23}\varphi_k^2\,dx$. In the chosen flag-adapted coordinates, the coefficient $\eta_{13,k}$ contains the direct contribution of $\beta_{13}$ and the corrections produced by the spatial generalized eigenvectors; it may therefore be nonzero even for a spatial nearest-neighbor chain.

Set $h_{j,k}=B^*\Psi_{j,k}\in L^2(\omega)$. The flag normalization gives $h_{3,k}=\varphi_{k\vert\omega}$. For $a=(a_1,a_2,a_3)\in\R^3$, the coefficient operators defined above are
\begin{align*}
C_{k,0}a&=a_1h_{1,k}+a_2h_{2,k}+a_3h_{3,k},\\
C_{k,1}a&=\eta_{12,k}a_1h_{2,k}
+(\eta_{13,k}a_1+\eta_{23,k}a_2)h_{3,k},\\
C_{k,2}a&=\frac{\eta_{12,k}\eta_{23,k}}{2}a_1h_{3,k}.
\end{align*}
Accordingly,
$$
B^*e^{sA^*}\sum_{j=1}^3a_j\Psi_{j,k}
=e^{-k^2s}\big(C_{k,0}a+sC_{k,1}a+s^2C_{k,2}a\big),
$$
and the matrix $\Gamma_k$ represents the coefficient form
$$
\Vert\Gamma_k^{1/2}a\Vert_{\R^3}^2 = \Vert C_{k,0}a\Vert_{L^2(\omega)}^2 + \Vert C_{k,1}a\Vert_{L^2(\omega)}^2 + \Vert C_{k,2}a\Vert_{L^2(\omega)}^2.
$$
Compared with \eqref{coefficientidentity2}, a quadratic coefficient has been added. Mixed terms in the components of $a$ can occur inside each $\Vert C_{k,m}a\Vert_{L^2(\omega)}^2$: the functions $h_{j,k}=B^*\Psi_{j,k}$ appearing there need not be orthogonal, and several components of $a$ may multiply the same function, as in $C_{k,1}a$. Hence $\Gamma_k$ is generally not diagonal. The form contains no inner products between coefficients of different polynomial degrees.

To illustrate this, consider first the idealized configuration $h_{1,k}=h_{2,k}=0$ and $\eta_{13,k}=0$. After identifying $\Span(h_{3,k})$ with $\R$, the eigenvalues of $\Gamma_k^{1/2}$, up to ordering, are
$$
\Vert h_{3,k}\Vert_{L^2(\omega)},\qquad
\vert\eta_{23,k}\vert\Vert h_{3,k}\Vert_{L^2(\omega)},\qquad
\frac{\vert\eta_{12,k}\eta_{23,k}\vert}{2}
\Vert h_{3,k}\Vert_{L^2(\omega)}.
$$
If
$$
-\log\vert\eta_{12,k}\vert=\tau k^2+\mathrm{o}(k^2),
\qquad
-\log\vert\eta_{23,k}\vert=\tau k^2+\mathrm{o}(k^2),
$$
the three modal critical times defined in \eqref{def_tau_i} are $0$, $\tau$ and $2\tau$. In this special chain, the time associated with a component is the sum of the exponential losses along the consecutive path that separates it from the controlled component.

Now allow the direct coupling $3\to1$. Even with $h_{1,k}=h_{2,k}=0$, the last two scalar coefficients contain the matrix
$$
\begin{pmatrix}
\eta_{13,k}&\eta_{23,k}\\
\eta_{12,k}\eta_{23,k}/2&0
\end{pmatrix}.
$$
Its determinant is $-\eta_{12,k}\eta_{23,k}^2/2$, but its largest singular value depends on $\eta_{13,k}$. Since the product of its singular values is the absolute value of the determinant, a large direct contribution can improve one combination while another becomes nearly cancelling and harder to observe. The products of consecutive coupling moments alone do not determine the hierarchy; the eigenvalues of $\Gamma_k^{1/2}$ do.

\subsection{Exact inverse design by nonlocal couplings}
The following proposition realizes any prescribed finite hierarchy. The price for this exact inverse design is that the couplings are bounded nonlocal operators rather than multiplication operators.

\begin{remark}\label{abstractscopeN}
The proof of Theorem \ref{mainN} uses a dual Riesz decomposition with uniformly bounded nilpotent block matrices $J_k$ and the polynomial observation formula \eqref{blockobservationN}. Together with \eqref{FHN}, these properties suffice for the range, moment and duality arguments. The conclusions therefore apply also to systems whose couplings are bounded operators rather than multiplication operators, whenever these block hypotheses hold. In fact, the proof of Lemma \ref{rieszN} given in Subsection \ref{app_proof_lem_rieszN} only uses that the coupling $Q = A-\partial_{xx}\Id_{\R^N}$ is a bounded strictly upper triangular operator on $\mathcal{H}$; it thus provides these block hypotheses for every such coupling, the superdiagonal formula becoming $(J_k)_{j,j+1} = \langle Q_{j,j+1}\varphi_k,\varphi_k\rangle_{L^2(0,\pi)}$. This applies to Proposition \ref{inverse-designN}.
\end{remark}

\begin{proposition}\label{inverse-designN}
Let $\Delta_D=\partial_{xx}$ be the one-dimensional Dirichlet Laplacian on $(0,\pi)$, and let $\gamma_1,\ldots,\gamma_{N-1}\geq0$. Consider
\begin{align*}
\partial_ty_j&=\Delta_Dy_j+e^{\gamma_j\Delta_D}y_{j+1}, \qquad 1\leq j<N, \\
\partial_ty_N&=\Delta_Dy_N+1_\omega u.
\end{align*}
This system satisfies the block Fattorini--Hautus condition, and its ordered modal critical times are
\begin{equation}\label{designedtimesN}
\tau_i=\sum_{j=N+1-i}^{N-1}\gamma_j,
\qquad 1\leq i\leq N,
\end{equation}
where an empty sum is zero. Consequently, for every prescribed sequence
$0=\vartheta_1\leq\vartheta_2\leq\cdots\leq\vartheta_N<+\infty$,
the choice
\begin{equation}\label{inversegammaN}
\gamma_j=\vartheta_{N-j+1}-\vartheta_{N-j},
\qquad 1\leq j<N,
\end{equation}
gives $\tau_i=\vartheta_i$ for all $i$. 
\end{proposition}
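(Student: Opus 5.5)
The plan is to exploit the fact that the nonlocal couplings commute with the Dirichlet Laplacian, so everything diagonalizes exactly in the sine basis. Since $e^{\gamma_j\Delta_D}\varphi_k=e^{-\gamma_jk^2}\varphi_k$, the coupling operator $Q=\sum_{j<N}e^{\gamma_j\Delta_D}E_{j,j+1}$ is bounded and strictly upper triangular. It maps $\varphi_k\otimes\R^N$ into itself. By Remark \ref{abstractscopeN}, the abstract block framework applies. Rather than invoking the general construction of Lemma \ref{rieszN}, I will take the explicit admissible bases $\Phi_{j,k}=\Psi_{j,k}=\varphi_ke_j$. This family is orthonormal in $\mathcal{H}$, hence a pair of dual Riesz bases, and it trivially satisfies the flag normalization \eqref{flagnormalizationN} with $\zeta_{j,k}=\xi_{j,k}=0$. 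In these coordinates the block matrix is
$$
J_k=\sum_{j=1}^{N-1}\varepsilon_{j,k}E_{j,j+1},\qquad \varepsilon_{j,k}=e^{-\gamma_jk^2}\in(0,1],
$$
which is nilpotent and uniformly bounded. This is consistent with the superdiagonal formula $(J_k)_{j,j+1}=\langle Q_{j,j+1}\varphi_k,\varphi_k\rangle$.

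Next I compute the coefficient maps \eqref{blockobservationN}. We have $B^*\Psi_ka=a_N\varphi_{k\vert\omega}$, and $J_k^*$ is a weighted lower shift with $(J_k^*a)_{j+1}=\varepsilon_{j,k}a_j$. Hence
$$
C_{k,m}a=\frac{1}{m!}\Big(\prod_{j=N-m}^{N-1}\varepsilon_{j,k}\Big)a_{N-m}\,\varphi_{k\vert\omega},\qquad 0\leq m\leq N-1.
$$
Distinct $m$ involve distinct coordinates of $a$, so $\Gamma_k$ is diagonal. Its entry at index $N-m$ is
$$
\frac{\alpha_k}{(m!)^2}\exp\Big(-2k^2\sum_{j=N-m}^{N-1}\gamma_j\Big).
$$
All these entries are positive because $\alpha_k\geq m_\omega>0$ by \eqref{alphabounds}. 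This gives \eqref{FHN}, and hence approximate controllability by \eqref{FHequivalenceN}.

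The eigenvalues of $\Gamma_k^{1/2}$ are therefore $\sqrt{\alpha_k}\,e^{-s_mk^2}/m!$ for $0\leq m\leq N-1$, where $s_m=\sum_{j=N-m}^{N-1}\gamma_j$. Since $\gamma_j\geq0$, the sequence $(s_m)$ is nondecreasing. The only delicate point, which I expect to be the main obstacle (a minor one), is matching this list to the \emph{ordered} values $\mu_{i,k}$ of \eqref{singularvaluesN}. Ties $s_m=s_{m+1}$ can occur when some $\gamma_j=0$, and the factors $1/m!$ may then permute the ordering. I plan to handle this as follows. The prefactors $\sqrt{\alpha_k}/m!$ lie in a fixed compact subset of $(0,+\infty)$. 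For $k$ large, any two eigenvalues with $s_m<s_{m'}$ are strictly ordered by their Gaussian factors. Hence the $i$-th largest eigenvalue is $e^{-s_{i-1}k^2}$ times a factor bounded above and below uniformly in $k$: a permutation can only occur within a group of equal $s$. The definition \eqref{def_tau_i} then gives $\tau_i=s_{i-1}=\sum_{j=N+1-i}^{N-1}\gamma_j$, which is \eqref{designedtimesN}. Here the limsup is in fact a limit, and the empty sum at $i=1$ is consistent with \eqref{orderedtimesN}.

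Finally, the inverse design is a telescoping computation. With $\gamma_j=\vartheta_{N-j+1}-\vartheta_{N-j}\geq0$ as in \eqref{inversegammaN}, each $\gamma_j$ is nonnegative by monotonicity of $(\vartheta_i)$, so the first part applies. Setting $l=N-j$, which runs from $1$ to $i-1$ as $j$ runs from $N+1-i$ to $N-1$, we get
$$
\sum_{j=N+1-i}^{N-1}\gamma_j=\sum_{l=1}^{i-1}(\vartheta_{l+1}-\vartheta_l)=\vartheta_i-\vartheta_1=\vartheta_i.
$$
Therefore $\tau_i=\vartheta_i$ for all $i$.
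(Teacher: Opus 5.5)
Your proof is correct and follows the paper's argument. Both use Fourier block bases with no spatial correctors and $(J_k)_{j,j+1}=e^{-\gamma_jk^2}$; the coefficient maps occupy distinct polynomial slots, so $\Gamma_k$ is diagonal, and the proof ends by ordering its eigenvalues and a telescoping sum. The tie issue you flag never arises: both $e^{-s_mk^2}$ and $1/m!$ are nonincreasing in $m$, so $\mu_{i,k}=\sqrt{\alpha_k}\,e^{-s_{i-1}k^2}/(i-1)!$ exactly for every $k$, although your grouping argument is also valid.
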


\begin{proof}
The operators $e^{\gamma_j\Delta_D}$ are bounded and commute with $\Delta_D$. Hence the standard Fourier basis gives the generalized eigenspaces directly, without spatial correctors, and
\begin{equation}\label{JnonlocalN}
(J_k)_{j,j+1}=e^{-\gamma_jk^2},
\qquad (J_k)_{i,j}=0\quad\text{if }j\neq i+1.
\end{equation}
For the coefficient maps defined above, one obtains, for $1\leq q\leq N$,
\begin{equation}\label{CnonlocalN}
C_{k,m}e_q=0\quad\text{if }m\neq N-q,
\qquad
C_{k,N-q}e_q
=\frac{e^{-k^2\sum_{j=q}^{N-1}\gamma_j}}{(N-q)!}
\varphi_{k\vert\omega}.
\end{equation}
The vectors $\mathcal{O}_ke_q$ occupy distinct polynomial-coefficient slots and are therefore orthogonal, so $\Gamma_k$ is diagonal. Since $\Vert\varphi_k\Vert_{L^2(\omega)}$ is bounded above and below, all their norms are positive and their quadratic exponential orders are
$0$, $\gamma_{N-1}$, $\gamma_{N-2}+\gamma_{N-1}$, $\ldots$, $\sum_{j=1}^{N-1}\gamma_j$.
Ordering the eigenvalues of $\Gamma_k^{1/2}$ from largest to smallest proves \eqref{designedtimesN}. Formula \eqref{inversegammaN} then gives a telescoping sum. This system satisfies the abstract block hypotheses stated in Remark \ref{abstractscopeN}: its Fourier basis is orthonormal, the matrices \eqref{JnonlocalN} are uniformly bounded and nilpotent, and the coefficient maps in \eqref{CnonlocalN} are uniformly bounded. The block moment, range-duality and one-block arguments therefore apply, and Theorem \ref{modalcriticalN} identifies the numbers in \eqref{designedtimesN} with the critical times of the corresponding modal subspaces.
\end{proof}

For example, when $N=3$ and $\gamma_1=\gamma_2=\tau>0$, the hierarchy is $(\tau_1,\tau_2,\tau_3)=(0,\tau,2\tau)$.

\section{Conclusion and open questions}

At each fixed horizon, the almost sharp spectral description of $\mathcal{C}_T$ combines heat dissipation with the cost of observing each generalized eigenspace. For two equations this gives the $L^2$ threshold $T^\star$ and, below it, anisotropic spaces that may be incomparable with standard regularity scales. For finite cascades, the singular values of the coefficient observation maps yield the critical times of adapted modal subspaces. The full energy-space threshold is the worst one, $\tau_N$, while $\tau_1,\ldots,\tau_{N-1}$ describe successive gains of controllability on nested subspaces. The exact inverse-design model shows that all these times can be distinct. For local couplings, consecutive moments do not determine the hierarchy; the full coefficient map, including spatial correctors and cancellations, must be retained.

\paragraph{At the critical times.}
Theorem \ref{main} leaves a subquadratic gap between the sufficient and necessary weights. The linear exponential $e^{\kappa_2k}$ of the sufficient weight is inherited from the scalar biorthogonal family and cannot be removed at that level: already for the family $(e^{-k^2t})_{k\geq1}$ in $L^2(0,+\infty)$, the squared norm of the $k$-th biorthogonal element is the classical product
$$
2k^2\prod_{j\neq k}\Big(\frac{j^2+k^2}{j^2-k^2}\Big)^2
$$
(diagonal entry of the inverse of the Cauchy Gram matrix $(1/(j^2+\ell^2))_{j,\ell\geq1}$), which is equal to $\frac{2}{\pi^2}\sinh^2(\pi k)$ by the product formulae for $\sinh$ and $\sin$: the biorthogonal family grows exactly like $e^{\pi k}$, and restricting the time interval to $(0,T)$ only increases these norms. Closing the gap requires sharper two-sided estimates for the vector-valued moment problem associated with the observed functions, taking into account the spatial functions $\theta_{k\vert\omega}$ and $\varphi_{k\vert\omega}$ and not only the time exponentials. In particular, it is not known whether the factors $e^{Ck}$ and $k$ are technical losses or occur in the exact range norm. The same issue governs the inclusions at $T=T^\star$ and $T+\sigma=T^\star$. Already for the arithmetic couplings of Subsection \ref{sec_3examples} with $0<\vartheta<+\infty$, we do not know whether $\mathcal{H}\subset\mathcal{C}_{\vartheta}$. A separate question is the behavior of the uniform null-control cost on $\mathcal{H}$ as $T\downarrow T^\star$; even for a scalar profile-controlled heat equation, the control cost may blow up at an arbitrarily prescribed rate \cite{Lissy_JMAA2017}.

\paragraph{Several directed coupling paths.}
For $N\geq3$, even a nearest-neighbor chain produces spatial correctors in its generalized eigenvectors. Additional upper triangular entries create distinct directed paths in the coupling graph from the controlled component to a given uncontrolled component. Their contributions can enter the same observed polynomial coefficient and cancel. The immediate problem is to find checkable assumptions on the supports and signs of the $\beta_{ij}$ that yield two-sided estimates for the singular values of the full block map. Such estimates would decide whether local multiplication couplings can realize several distinct positive critical times, or even a prescribed hierarchy. A separate problem is to compare the resulting adapted modal spaces with the physical component spaces.

\paragraph{Choice of the coupling.}
The space $\mathcal{C}_T$ depends on $\beta$ and $\omega$ only through the sequence $(\rho_k)$, and, for $N$ equations, on the couplings $\beta_{ij}$ only through the matrices $\Gamma_k$. This dependence is not robust: in the arithmetic example of Subsection \ref{sec_3examples}, indicator functions supported in two adjacent intervals, arbitrarily close to each other in $L^1(0,\pi)$, may have critical times $0$ and $+\infty$. Two questions follow. Given a class of admissible couplings, for instance the indicator functions of the subsets of $(0,\pi)\setminus\omega$ of prescribed measure, which couplings make $\mathcal{C}_T$ largest, or make the null-control cost on a given space smallest, and for which couplings does \eqref{FH} fail? This is, for the coupling, the analogue of the optimal observation problems of \cite{PrivatTrelatZuazua_ARMA2015}, in which the observability constant is optimized over the position of the sensor; here the unknown is the coupling, and the objective is a whole sequence of modal coefficients. Time-dependent couplings $\beta(t,x)$, in the spirit of the switching controls of \cite{Zuazua_JEMS2011}, may remove the nondegeneracy condition \eqref{FH} altogether, by exciting every mode on some time interval even when a fixed profile cancels it; the block structure is then lost, and the analysis has to rely on the range duality of Subsection \ref{sec_range_duality} directly.

\paragraph{Beyond triangular couplings.}
Cascades are the simplest underactuated coupled systems. Two-way couplings, and more generally non-triangular couplings between parabolic equations \cite{AKBGBdT_MCRF2011}, fall outside the present framework: the generator is no longer a nilpotent perturbation of a diagonal one, the double eigenvalues split, and the adapted block bases of Lemma \ref{rieszN} are not available. Two questions seem accessible. How stable is the description of $\mathcal{C}_T$ under a bounded or compact perturbation of the coupling? The difficulty is that such a perturbation acts on the weights themselves, and the example above shows that it may change $\mathcal{C}_T$ completely. How do the spaces $\mathcal{C}_T$ relate to stabilizability? Heat cascades are stabilizable, with an arbitrary decay rate, by finite-dimensional boundary feedbacks \cite{LPT_Automatica2026,LPT_SCL2026}, in state spaces from which they need not be null controllable at small time; for systems which are not time-reversible, the classical equivalence between exact controllability and stabilizability fails, and it would be useful to know which weight corresponds to which decay rate.

\paragraph{Other boundary conditions and other parabolic operators.}
System \eqref{system} was written with Dirichlet conditions, but the proofs use only the following properties of the scalar operator $\partial_{xx}$: an orthonormal basis of real eigenfunctions $\varphi_k$, simple eigenvalues $-\lambda_k$ with $\lambda_{k+1}-\lambda_k\asymp k$, and the lower bound $\inf_k\Vert\varphi_k\Vert_{L^2(\omega)}>0$. Neumann, Robin or mixed boundary conditions, and more generally Sturm--Liouville operators $\partial_x(a\partial_x)-q$ with regular coefficients, share these properties; the moment estimates of Proposition \ref{twosidedmoment} extend to such spectra \cite{BoyerMorancey_CRM2023}, and the corresponding block Gram matrices are computed in \cite{BoyerMorancey_AMBP2025}. Theorems \ref{main} and \ref{mainN} then hold with $e^{-2T\lambda_k}$ in place of $e^{-2Tk^2}$, with $\rho_k$ and $\Gamma_k$ computed in the new eigenbasis and the critical times defined by $\limsup(-\log\rho_k)/\lambda_k$. For Neumann conditions, the constant mode $\lambda_0=0$ is simply not dissipated. Only the explicit identification of Proposition \ref{geometricrho} depends on the sine basis: with Neumann conditions, $\varphi_k=\sqrt{2/\pi}\cos(kx)$ and the same Wronskian computation applies, whereas for a Sturm--Liouville operator the coefficient $A_k$ becomes an integral of $\beta\varphi_k^2$ against a variation-of-constants kernel. 

The same remarks apply to higher-order or nonlocal operators with real simple eigenvalues and growing gaps, such as $-\partial_{xxxx}$, the linearized Kuramoto--Sivashinsky operator $-\partial_{xxxx}-\nu\partial_{xx}$ (with hinged boundary conditions and $\nu$ such that the eigenvalues are simple; finitely many of them may be positive, which the moment method accommodates), or the fractional Laplacians $-(-\partial_{xx})^s$ with $s>\frac{1}{2}$ \cite{MicuZuazua_SICON2006}. The Gaussian scale becomes $e^{-2T\lambda_k}$ and the loss of the biorthogonal estimate becomes $e^{C\lambda_k^\theta}$, where $\theta<1$ is the exponent of the counting function; the analogue of Theorem \ref{main} holds as long as this loss is negligible with respect to $e^{-2T\lambda_k}$. 

The M\"untz condition $\sum_k\lambda_k^{-1}<+\infty$ is the limit of this approach; in the critical case $s=\frac{1}{2}$ of the half-heat equation, the space of null-controllable data is of a different nature \cite{HartmannKoenig_arXiv2025}. What is essential throughout is that the same scalar operator acts on every component, so that each eigenvalue carries an $N$-dimensional generalized eigenspace: with distinct diffusion coefficients, the eigenvalues $d_1k^2$ and $d_2k^2$ are no longer double, and null controllability then depends on the Diophantine properties of $d_1/d_2$ \cite{AKBGBdT_JFA2014,LucaDeTeresa_SeMA2013}.

\paragraph{Boundary control.}
For a Dirichlet control acting on the second equation at $x=0$, the natural state space is $H^{-1}(0,\pi;\R^2)$ and the adjoint observation is $\partial_xz_2(t,0)$. The modal calculation suggests the coefficient form $k^2(I_k^2\vert a_1\vert^2+\vert a_2\vert^2)$ and the known $H^{-1}$ null-controllability threshold
$$
T_\partial^\star=\limsup_{k\to +\infty}\frac{-\log\vert I_k\vert}{k^2}
$$
from \cite{AKBGBdT_JMAA2016}. A fixed-horizon analogue of Theorem \ref{main} should therefore involve the dual weights $k^{-2}(\vert c_1\vert^2/I_k^2+\vert c_2\vert^2)$ together with the $H^{-1}$ heat scale, with infinite cost in the first direction when $I_k=0$. We expect the same strategy to apply, but the extension is not formal: the boundary control operator is unbounded. The operator-range framework of Subsection \ref{sec_range_duality} extends to this setting, with $\mathcal{H}$ replaced by the extrapolation space $D(A^*)'$ and $L_T$ viewed as a bounded map into that space, whether or not the control operator is admissible \cite[Proposition 12.1.2]{TucsnakWeiss_book2009}; the hybrid range inclusions of \cite{Lissy_JMAA2026} may then help to compare the resulting spaces with the standard scales. We do not pursue these points here.

\paragraph{Higher space dimensions.}
For coupled heat systems, multidimensional observability has been obtained by Carleman estimates \cite{AKBD_JEE2009} and by the Lebeau--Robbiano method \cite{LissyZuazua_SICON2019}, under the respective structural hypotheses of those works. Hierarchical and transmutation methods also yield multidimensional controllability results for cascades with a reduced number of controls, including some configurations with disjoint coupling and control regions \cite{AlabauBoussouira_ADE2013}. These works establish observability or null controllability for a prescribed state space; they do not identify the full fixed-horizon space of null-controllable data.

Tensorized geometries are the most accessible next step. In rectangles and cylinders, some multidimensional constructions combine a one-dimensional moment problem with a transverse spectral inequality \cite{AKBGBMdT_arXiv2024,BBGO_SICON2014}; another tensorized approach reduces the system to a coupled family of block moment problems \cite{BoyerOlive_AIF2024}. Recent remarks on the higher-dimensional moment method are given in \cite{Morancey_HAL2026}. A fixed-horizon theorem would require two-sided estimates retaining the coupling-dependent cost of every transverse block. On a general bounded domain, multiplicities, small gaps, eigenfunction concentration and the conditioning of the coupling on each Laplace eigenspace suggest working with spectral clusters and operator-valued coefficient forms analogous to $\Gamma_k$.

Boundary-controlled scalar heat reachable states satisfy geometry-dependent holomorphic extension properties \cite{StrohmaierWaters_MZ2022}, while the essential maximal distance to the observation set gives an optimal exponential barrier for weighted heat observability in a broad metric-measure framework \cite{BoulardHayatTrelat_arXiv2026}. The fixed-horizon problem is to combine these geometric effects with the loss of observation inside spectral clusters. Under which spectral-cluster hypotheses does the quantitative Fattorini--Hautus principle of \cite{AKBGBM_JMPA2019} recover $T_{\mathcal{H}}$ from the quadratic behavior of the smallest cluster-observation singular value? Can different cluster directions have distinct thresholds? Which geometric and coupling-dependent quantities determine them?

\paragraph{Beyond linear parabolic cascades.}
Two directions leave the framework of this paper. 

The first is the dispersive analogue of the wave--heat cascades of \cite{LPT_arXiv2026}: a heat equation coupled with a linear Korteweg--de Vries equation. With periodic boundary conditions, the dispersive eigenvalues are $i\mu_m$ with $\mu_{m+1}-\mu_m\to+\infty$, so that Ingham's inequality holds in arbitrarily small time and no threshold is expected from the dispersive component; a mixed Ingham--M\"untz inequality and an adapted Riesz basis should then describe the null-controllable data, but for the boundary conditions of \cite{Rosier_COCV1997} the spectral analysis of the Korteweg--de Vries operator itself, including the multiplicities at the critical lengths, is a first obstacle. 

The second direction is nonlinear couplings. When one of the equations is nonlinear, a semilinear heat equation or the Korteweg--de Vries equation itself, there is no Riesz decomposition and the present method does not apply. Even for a semilinear perturbation of \eqref{system}, the question is whether the anisotropic space $\mathcal{C}_T$, rather than a smaller Sobolev-type space, is preserved by a fixed point or source-term argument \cite{LiuTakahashiTucsnak_COCV2013}, since a nonlinearity mixes the well-observed and the weakly observed directions of each spectral block. Both questions are open.

\appendix

\section{Appendix}\label{appendix}
\subsection{A vector-valued moment estimate of finite multiplicity}\label{appendixmoments}
The following estimate is used throughout the paper: with $N=2$ for the double blocks of Section \ref{twocascade}, and with arbitrary $N$ for the finite cascades of Section \ref{finitecascades}.

\begin{proposition}\label{twosidedmoment}
For every integer $N \geq 1$, there exists $\kappa_N>0$ such that, for every real Hilbert space $X$ and every $T>0$, there are positive constants $c_{T,N}$ and $C_{T,N}$ satisfying
\begin{equation}\label{twosidedmomentineq}
c_{T,N}\sum_{k\geq1}e^{-\kappa_Nk}\sum_{m=0}^{N-1}\Vert A_{m,k}\Vert_X^2
\leq \int_0^T\Big\Vert\sum_{k\geq1}e^{-k^2s}\sum_{m=0}^{N-1}s^mA_{m,k}\Big\Vert_X^2\,ds
\leq C_{T,N}\sum_{k\geq1}\frac{1}{k}\sum_{m=0}^{N-1}\Vert A_{m,k}\Vert_X^2
\end{equation}
for every finitely supported family $(A_{m,k})_{k\geq1,\,0\leq m<N}$ in $X$. The constants are independent of $X$ and of the family. Moreover, there exists $C_N>0$, independent of $T$ and $X$, such that
\begin{equation}\label{singlemoment}
\int_0^T e^{-2k^2s}\Big\Vert\sum_{m=0}^{N-1}s^mA_m\Big\Vert_X^2\,ds \leq \frac{C_N}{k^2}\sum_{m=0}^{N-1}\Vert A_m\Vert_X^2
\end{equation}
for every $k\geq1$ and every $(A_0,\ldots,A_{N-1})\in X^N$.
\end{proposition}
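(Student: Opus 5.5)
Three estimates have to be proved: the single-frequency bound \eqref{singlemoment}, the upper bound in \eqref{twosidedmomentineq}, and the lower bound in \eqref{twosidedmomentineq}. The first two are elementary. The third rests on a biorthogonal family, and that is where the work lies.

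\emph{Single-frequency bound.} I extend the integral to $(0,+\infty)$ and substitute $r=k^2s$. This gives
$$
k^{-2}\int_0^\infty e^{-2r}\Big\Vert\sum_m r^mk^{-2m}A_m\Big\Vert_X^2dr .
$$
Since $k^{-2m}\leq1$, the Cauchy--Schwarz inequality bounds the integrand by $e^{-2r}(\sum_m r^{2m})\sum_m\Vert A_m\Vert_X^2$. This yields \eqref{singlemoment} with $C_N=\sum_m\int_0^\infty r^{2m}e^{-2r}dr$, which is independent of $T$ and $X$.

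\emph{Upper bound in \eqref{twosidedmomentineq}.} I first bound the integral over $(0,T)$ by $e^{2T}$ times the integral of $e^{-2s}\Vert\cdot\Vert_X^2$ over $(0,\infty)$. I then expand the square. The cross terms are
$$
\int_0^\infty s^{m+m'}e^{-(j^2+k^2)s}\,ds=\frac{(m+m')!}{(j^2+k^2)^{m+m'+1}}\leq \frac{C_N}{j^2+k^2},
$$
which holds since $j^2+k^2\geq2$. So it suffices that the matrix $\big((j^2+k^2)^{-1}\big)_{j,k}$ be bounded on $\ell^2$ with weights $k^{-1/2}$. In other words, I need
$$
\sum_{j,k}\frac{x_jx_k}{j^2+k^2}\leq C\sum_k\frac{x_k^2}{k}.
$$
Writing $x_k=\sqrt{k}\,y_k$, this reduces to boundedness of the kernel $\sqrt{jk}/(j^2+k^2)$. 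That kernel is homogeneous of degree $-1$, and the Schur test with test function $k^{-1/2}$ (the Hilbert--Schur inequality) gives its boundedness. The vector-valued case follows from the same bound applied to the norms $\Vert A_{m,k}\Vert_X$ via the triangle inequality.

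\emph{Lower bound in \eqref{twosidedmomentineq}.} This is the main obstacle. The plan is to construct, in $L^2(0,T)$, a family $(q_{k,m})$ biorthogonal to $(s^me^{-j^2s})_{j\geq1,0\leq m<N}$ with
$$
\Vert q_{k,m}\Vert_{L^2(0,T)}\leq C_Te^{\kappa_Nk/2}.
$$
This is the classical block/multiplicity version of the Fattorini--Russell construction, as in \cite{AKBGBdT_JMPA2011,FCGBdT_JFA2010,Boyer_Notes2023}. One first takes the Weierstrass-type product $\prod_j(1+z/j^2)^N$ on $(0,\infty)$. One then divides by $(1+z/k^2)^{N-m}$ and corrects polynomially to obtain entire functions of exponential type whose Laplace-inverse transforms are supported in $(0,T)$. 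Their growth along the real axis is $e^{Ck}$, because $\sum_j1/j^2<\infty$ and the counting function is $\sqrt{r}$. Given this family, pairing the integrand with the vector $q_{k,m}$ recovers $A_{m,k}$ exactly, since the family is finitely supported. Bochner–Cauchy--Schwarz then gives
$$
\Vert A_{m,k}\Vert_X^2\leq \Vert q_{k,m}\Vert^2\int_0^T\Big\Vert\cdots\Big\Vert_X^2ds .
$$
Multiplying by $e^{-\kappa_Nk}$ with $\kappa_N$ strictly larger than the biorthogonal growth rate makes $\sum_k e^{-\kappa_Nk}\Vert q_{k,m}\Vert^2$ summable. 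Summing over $k$ and $m$ then yields the left inequality with $c_{T,N}^{-1}=\sum_{k,m}e^{-\kappa_Nk}\Vert q_{k,m}\Vert^2$. The constant is independent of $X$ because only scalar biorthogonal functions are used.

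If the $T$-dependence of the growth rate is an issue, I will take the uniform-in-$T$ bound $e^{C_Tk}$ with the linear coefficient independent of $T$, as provided by \cite{BoyerMorancey_CRM2023}. That coefficient is then $\kappa_N$.
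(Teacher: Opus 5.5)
Your proposal is correct and follows the paper's proof essentially step by step: a cited biorthogonal family to $(s^me^{-j^2s})$ with norms $M_{T,N}e^{K_Nk}$, where $K_N$ is independent of $T$ (the paper also takes it from Boyer--Morancey); the Schur test on the kernel $\sqrt{jk}/(j^2+k^2)$ for the upper bound; and explicit Gamma integrals for \eqref{singlemoment}. The one slip is that the bound $\Vert q_{k,m}\Vert_{L^2(0,T)}\leq C_Te^{\kappa_Nk/2}$ does not make $\sum_ke^{-\kappa_Nk}\Vert q_{k,m}\Vert^2$ summable, so you should write it as $M_{T,N}e^{K_Nk}$ and choose $\kappa_N>2K_N$ (the paper takes $\kappa_N=2K_N+2$); also, your sketch of the construction omits the exponential-type multiplier that actually produces support in $(0,T)$, which is harmless only because you, like the paper, defer to the cited theorem.
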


\begin{proof}
Fix $N\geq1$. For the lower bound, the scalar block moment theorem \cite[Theorem~46]{BoyerMorancey_CRM2023}, applied to the spectrum $\{k^2\mid k\geq1\}$ with singleton spectral groups and multiplicity $N$, gives a family $(q_{m,k})_{k\geq1,\,0\leq m<N}$ in $L^2(0,T)$ satisfying
$$
\int_0^T s^j e^{-\ell^2s}q_{m,k}(s)\,ds = \delta_{k\ell}\delta_{mj}\qquad \forall k,\ell\geq1\qquad \forall j,m\in\{0,\ldots,N-1\},
$$
and, for constants $K_N>0$ independent of $T$ and $M_{T,N}>0$,
\begin{equation}\label{biorthbound}
\Vert q_{m,k}\Vert_{L^2(0,T)} \leq M_{T,N}e^{K_Nk}\qquad \forall k\geq1\qquad \forall m\in\{0,\ldots,N-1\}.
\end{equation}
Indeed, the counting function of this spectrum has exponent $\frac{1}{2}$, so the exponential bound in that theorem has order $e^{K_Nk}$; the normalization by $(-s)^m/m!$ used there is absorbed into constants depending only on $N$. See also \cite[Theorem~V.4.26]{Boyer_Notes2023}. The underlying constructions originate in \cite{FattoriniRussell_ARMA1971,FattoriniRussell_QAM1975}; finite multiplicities are treated through matrix exponentials in \cite{AKBGBdT_JMPA2011}, and the multiplicity-two case is given in \cite[Lemma~3.1(b)]{FCGBdT_JFA2010}. These references provide complex-valued biorthogonal families; since the functions $s\mapsto s^je^{-\ell^2s}$ are real-valued, replacing each $q_{m,k}$ by its real part preserves the biorthogonality relations and does not increase its norm, so that the family may be taken real-valued.

Set $F(s)=\sum_ke^{-k^2s}\sum_{m=0}^{N-1}s^mA_{m,k}$. Bochner integration and biorthogonality give $A_{m,k}=\int_0^T F(s)q_{m,k}(s)\,ds$. By \eqref{biorthbound} and the Cauchy--Schwarz inequality,
$$
\Vert A_{m,k}\Vert_X^2 \leq M_{T,N}^2e^{2K_Nk}\Vert F\Vert_{L^2(0,T;X)}^2.
$$
Setting $\kappa_N = 2K_N+2$, multiplication by $e^{-\kappa_Nk}$ and summation in $m$ and $k$ prove the lower bound in \eqref{twosidedmomentineq}, with
$$
c_{T,N}^{-1} = NM_{T,N}^2\sum_{k\geq1}e^{-2k}.
$$

For the upper bound, let $(x_k)$ be a finite family in $X$. Expanding the square, integrating and using the Cauchy--Schwarz inequality gives
$$
\int_0^T\Big\Vert\sum_ke^{-k^2s}x_k\Big\Vert_X^2\,ds \leq \sum_{k,\ell}\frac{\Vert x_k\Vert_X\Vert x_\ell\Vert_X}{k^2+\ell^2}
=\sum_{k,\ell}m_{k,\ell}\frac{\Vert x_k\Vert_X}{\sqrt{k}}\frac{\Vert x_\ell\Vert_X}{\sqrt{\ell}},
$$
where $m_{k,\ell}=\frac{\sqrt{k\ell}}{k^2+\ell^2}$. For every $k\geq1$,
$$
\sum_{\ell\leq k}\frac{\sqrt{k\ell}}{k^2+\ell^2} \leq \frac{\sqrt{k}}{k^2}\sum_{\ell\leq k}\sqrt{\ell} \leq 1,\qquad
\sum_{\ell>k}\frac{\sqrt{k\ell}}{k^2+\ell^2} \leq \sqrt{k}\sum_{\ell>k}\ell^{-3/2} \leq \sqrt{k}\int_k^{+\infty}x^{-3/2}\,dx = 2.
$$
Thus $\sup_k\sum_{\ell\geq1}m_{k,\ell}\leq3$. The matrix $M=(m_{k,\ell})_{k,\ell\geq1}$ is symmetric, so its column sums satisfy the same bound. Schur's test \cite[Theorem~5.2]{HalmosSunder_book1978}, applied to $\N\setminus\{0\}$ with counting measure, yields $\Vert M\Vert_{L(\ell^2)}\leq3$. Hence
\begin{equation}\label{elementaryupper}
\int_0^T\Big\Vert\sum_ke^{-k^2s}x_k\Big\Vert_X^2\,ds \leq 3\sum_k\frac{\Vert x_k\Vert_X^2}{k}.
\end{equation}
For $F_m(s)=\sum_ke^{-k^2s}A_{m,k}$, we have $F(s)=\sum_{m=0}^{N-1}s^mF_m(s)$. Applying \eqref{elementaryupper} to each $F_m$ gives
$$
\Vert F\Vert_{L^2(0,T;X)}^2 \leq N\sum_{m=0}^{N-1}T^{2m}\Vert F_m\Vert_{L^2(0,T;X)}^2 \leq 3N\max(1,T^{2N-2})\sum_k\frac{1}{k}\sum_{m=0}^{N-1}\Vert A_{m,k}\Vert_X^2.
$$
This proves the upper bound in \eqref{twosidedmomentineq}, with $C_{T,N}=3N\max(1,T^{2N-2})$.

Finally, for a single frequency $k$, the inequality $\Vert\sum_{m=0}^{N-1}s^mA_m\Vert_X^2\leq N\sum_{m=0}^{N-1}s^{2m}\Vert A_m\Vert_X^2$ and the identity
$$
\int_0^{+\infty}s^{2m}e^{-2k^2s}\,ds = \frac{(2m)!}{(2k^2)^{2m+1}}
$$
give
$$
\int_0^T e^{-2k^2s}\Big\Vert\sum_{m=0}^{N-1}s^mA_m\Big\Vert_X^2\,ds \leq N\sum_{m=0}^{N-1}\frac{(2m)!}{(2k^2)^{2m+1}}\Vert A_m\Vert_X^2 \leq \frac{C_N}{k^2}\sum_{m=0}^{N-1}\Vert A_m\Vert_X^2,
$$
where one may take $C_N=N\max_{0\leq m<N}\frac{(2m)!}{2^{2m+1}}$. This proves \eqref{singlemoment}.
\end{proof}

The lower estimate extracts all $N$ coefficients at every frequency and controls cancellations between distinct frequencies and polynomial degrees. The upper estimate is the Hilbert--Schur bound; equivalently, \eqref{elementaryupper} says that $(\sqrt{k}e^{-k^2s})_{k\geq1}$ is a Bessel sequence in $L^2(0,T)$. All estimates are for fixed $N$; no uniformity as $N\to+\infty$ is asserted. For each fixed $N$, the cited biorthogonal construction gives $M_{T,N}\leq C_N'e^{C_N'/T}$ as $T\to0^+$, consistent with the classical fast-control scale \cite{BBGO_SICON2014,DardeErvedoza_APDE2019,TenenbaumTucsnak_JDE2007}. Estimates under weaker gap assumptions appear in \cite{CMV_DCDSS2020,GBOL_EECT2024}, and a matrix formulation is given in \cite[Proposition~9 and Theorem~10]{BoyerMorancey_CRM2023}. An elementary and self-contained proof of both bounds of \eqref{twosidedmomentineq}, through Blaschke products on the half-line for the biorthogonal family, Hermite interpolation for the blocks, and absorption of the tail for a finite horizon, is given in \cite[Chapter~4]{Trelat_Chengdu2026}.

\subsection{A triangular criterion for Riesz bases}\label{app_triangular}
The Riesz basis property of the adapted bases follows from an elementary perturbation argument, which we state once for a flag of $N$ blocks: it is used with $N=2$ in the proof of Lemma \ref{lem_Jordan}, and twice in the proof of Lemma \ref{rieszN} below.

\begin{lemma}[Triangular perturbation of an orthonormal basis]\label{lem_triangular}
Let $\mathcal{X}=\mathcal{X}_1\oplus\cdots\oplus\mathcal{X}_N$ be an orthogonal decomposition of a separable Hilbert space, and let $(e_{j,k})_{1\leq j\leq N,\,k\geq1}$ be an orthonormal basis of $\mathcal{X}$ with $e_{j,k}\in\mathcal{X}_j$ for all $j$ and $k$. Let $(\eta_{j,k})_{j,k}$ be a family of $\mathcal{X}$ such that
$$
\eta_{j,k}\in\mathcal{X}_1\oplus\cdots\oplus\mathcal{X}_{j-1}\quad\forall j,k,
\qquad\qquad
\sum_{j,k}\Vert\eta_{j,k}\Vert_{\mathcal{X}}^2<+\infty .
$$
Then $(e_{j,k}+\eta_{j,k})_{j,k}$ is a Riesz basis of $\mathcal{X}$, and its dual Riesz basis $(f_{j,k})_{j,k}$ satisfies
$$
f_{j,k}-e_{j,k}\in\mathcal{X}_{j+1}\oplus\cdots\oplus\mathcal{X}_N\qquad\forall j,k .
$$
The same statement holds, with the two flags exchanged, when $\eta_{j,k}\in\mathcal{X}_{j+1}\oplus\cdots\oplus\mathcal{X}_N$ for all $j$ and $k$.
\end{lemma}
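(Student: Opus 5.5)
The approach is to write the perturbed family as the image of the orthonormal basis under one bounded operator, $e_{j,k}+\eta_{j,k}=(\Id+K)e_{j,k}$, where $K$ is defined on the orthonormal basis by $Ke_{j,k}=\eta_{j,k}$ and extended linearly. The square-summability hypothesis makes $K$ a Hilbert--Schmidt operator, with $\Vert K\Vert_{HS}^2=\sum_{j,k}\Vert\eta_{j,k}\Vert^2$; in particular $K$ is bounded. A family of the form $Ue_{j,k}$ is a Riesz basis as soon as $U$ is a bounded isomorphism, so it suffices to show that $\Id+K$ is invertible with bounded inverse. Its dual basis is then $(\Id+K)^{-*}e_{j,k}$, because $\langle(\Id+K)e_{j,k},(\Id+K)^{-*}e_{i,\ell}\rangle=\delta$.

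Invertibility comes from the block structure of $K$. Let $P_i$ be the orthogonal projection onto $\mathcal{X}_i$. The hypothesis $\eta_{j,k}\in\mathcal{X}_1\oplus\cdots\oplus\mathcal{X}_{j-1}$ says that $K$ maps $\mathcal{X}_j$ into $\mathcal{X}_1\oplus\cdots\oplus\mathcal{X}_{j-1}$. Equivalently, in the $N\times N$ block-matrix representation $K=(P_iKP_j)_{i,j}$, only blocks with $i<j$ are nonzero, so $K$ is strictly upper block triangular. Products of strictly upper triangular block matrices stay strictly upper triangular and lose one more diagonal at each step, hence $K^N=0$. Then $(\Id+K)^{-1}=\sum_{m=0}^{N-1}(-K)^m$ is a finite sum of bounded operators, which settles the Riesz basis property. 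The only point to check is that the block decomposition is legitimate: the orthogonal decomposition together with boundedness of $K$ makes each $P_iKP_j$ bounded. Note that the Hilbert--Schmidt bound is only needed for boundedness; nilpotency does all the work for invertibility.

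For the dual basis, write
\[
(\Id+K)^{-*}=\Big(\sum_{m=0}^{N-1}(-K)^m\Big)^{*}=\Id+\sum_{m=1}^{N-1}(-K^*)^m .
\]
The adjoint $K^*$ has blocks $(P_jKP_i)^*$ in position $(i,j)$, so it is strictly lower block triangular and maps $\mathcal{X}_j$ into $\mathcal{X}_{j+1}\oplus\cdots\oplus\mathcal{X}_N$. The same holds for all its powers. Hence $f_{j,k}-e_{j,k}=\sum_{m\ge1}(-K^*)^me_{j,k}$ lies in $\mathcal{X}_{j+1}\oplus\cdots\oplus\mathcal{X}_N$, as claimed. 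The version with the flags exchanged follows by applying the result to the reversed decomposition $\mathcal{X}_N\oplus\cdots\oplus\mathcal{X}_1$, relabelling $j\mapsto N+1-j$.

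There is no real obstacle here. The step needing the most care is confirming that $K$ is well defined and bounded on all of $\mathcal{X}$, rather than only on finite spans, so that the block calculus applies. This follows from the Hilbert--Schmidt estimate: $\Vert Kx\Vert\le\sum|x_{j,k}|\Vert\eta_{j,k}\Vert\le\Vert x\Vert\,\Vert K\Vert_{HS}$ by Cauchy--Schwarz.
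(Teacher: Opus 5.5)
Your proof is correct and follows the paper's argument essentially step by step. In both, the operator $K$ with $Ke_{j,k}=\eta_{j,k}$ is Hilbert--Schmidt and strictly block upper triangular, so $K^N=0$ and $(\Id+K)^{-1}=\sum_{r=0}^{N-1}(-K)^r$, and the dual basis $(\Id+K)^{-*}e_{j,k}=\sum_{r=0}^{N-1}(-K^*)^re_{j,k}$ has the stated form because $K^*$ maps each $\mathcal{X}_i$ into the later blocks. Your relabelling $j\mapsto N+1-j$ correctly justifies the exchanged-flag version, which the paper states without further comment.
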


\begin{proof}
Let $K\in L(\mathcal{X})$ be defined by $Ke_{j,k}=\eta_{j,k}$; it is Hilbert--Schmidt, and it maps $\mathcal{X}_j$ into $\mathcal{X}_1\oplus\cdots\oplus\mathcal{X}_{j-1}$. Hence $K^N=0$, the operator $\Id+K$ is invertible with $(\Id+K)^{-1}=\sum_{r=0}^{N-1}(-K)^r$, and $e_{j,k}+\eta_{j,k}=(\Id+K)e_{j,k}$ is the image of an orthonormal basis under an isomorphism, that is, a Riesz basis, whose dual basis is $f_{j,k}=(\Id+K)^{-*}e_{j,k}=\sum_{r=0}^{N-1}(-K^*)^re_{j,k}$. Finally, for $x\in\mathcal{X}_i$ and $y\in\mathcal{X}_j$, the product $\langle K^*x,y\rangle_{\mathcal{X}}=\langle x,Ky\rangle_{\mathcal{X}}$ vanishes unless $i<j$, so that $K^*$ maps $\mathcal{X}_i$ into $\mathcal{X}_{i+1}\oplus\cdots\oplus\mathcal{X}_N$ and $f_{j,k}-e_{j,k}=\sum_{r=1}^{N-1}(-K^*)^re_{j,k}$ has the announced form.
\end{proof}

The nilpotency of $K$ is what makes this elementary. For a perturbation which is only Hilbert--Schmidt, the Riesz property still holds when the perturbed family is complete and minimal, by the Bari criterion \cite{GohbergKrein_book1969}, which we shall not need.

\subsection{Proof of Lemma \ref{rieszN}}\label{app_proof_lem_rieszN}
In this subsection we write $A=A_0+Q$, where $A_0=\partial_{xx}\,\Id_{\R^N}$ and $Q=\sum_{1\leq i<j\leq N}\beta_{ij}E_{i,j}$, so that $(Qy)_i=\sum_{j>i}\beta_{ij}y_j$.

\paragraph{Construction of the primal vectors.}
Fix $k\geq1$. We construct $\Phi_{1,k},\ldots,\Phi_{N,k}$ and the entries of $J_k$ by induction on $j$, in such a way that
\begin{equation}\label{constructionN}
\Phi_{j,k}=\varphi_ke_j+\zeta_{j,k},\qquad (A+k^2)\Phi_{j,k}=\sum_{i<j}(J_k)_{ij}\Phi_{i,k},
\end{equation}
where the components of $\zeta_{j,k}$ of index $\geq j$ vanish, while those of index $<j$ belong to $H^2(0,\pi)\cap H_0^1(0,\pi)$ and are orthogonal to $\varphi_k$ in $L^2(0,\pi)$.

For $j=1$ we take $\Phi_{1,k}=\varphi_ke_1$, and \eqref{constructionN} holds since $Q\Phi_{1,k}=0$. Let $2\leq j\leq N$, and assume that $\Phi_{1,k},\ldots,\Phi_{j-1,k}$ have been constructed. Writing $\Phi_{j,k}=\varphi_ke_j+\sum_{i<j}u_ie_i$, the components of index $\geq j$ in \eqref{constructionN} hold for every choice of the functions $u_i$, because $(\partial_{xx}+k^2)\varphi_k=0$ and because $\Phi_{i,k}$ has no component of index $\geq j$ for $i<j$. The component of index $i<j$ reads
$$
(\partial_{xx}+k^2)u_i=(J_k)_{ij}\varphi_k+g_i,
\qquad
g_i=\sum_{i<m<j}(J_k)_{mj}(\Phi_{m,k})_i-\beta_{ij}\varphi_k-\sum_{i<l<j}\beta_{il}u_l .
$$
We solve these equations successively for $i=j-1,j-2,\ldots,1$; at each step, $g_i$ involves only quantities determined at the previous steps. As for \eqref{psik}, the Dirichlet problem $(\partial_{xx}+k^2)u=f$, $u(0)=u(\pi)=0$, has a solution if and only if $\langle f,\varphi_k\rangle_{L^2(0,\pi)}=0$. This forces $(J_k)_{ij}=-\langle g_i,\varphi_k\rangle_{L^2(0,\pi)}$, and then $u_i$ is unique in the orthogonal of $\varphi_k$, with
$$
\Vert u_i\Vert_{L^2(0,\pi)}\leq\frac{1}{2k-1}\big\Vert(J_k)_{ij}\varphi_k+g_i\big\Vert_{L^2(0,\pi)} .
$$
A descending induction on $i$ gives $\vert(J_k)_{ij}\vert\leq C$ and $\Vert u_i\Vert_{L^2(0,\pi)}\leq C/k$, where $C$ depends only on $N$ and on the norms $\Vert\beta_{ij}\Vert_{L^\infty(0,\pi)}$. This proves the first identity in \eqref{similarityN}, with a strictly upper triangular matrix $J_k$ satisfying $\sup_k\Vert J_k\Vert<+\infty$ and $J_k^N=0$, together with the primal half of \eqref{flagnormalizationN} and the estimate $\Vert\zeta_{j,k}\Vert_{\mathcal{H}}=\mathrm{O}(k^{-1})$. Adding to $\Phi_{j,k}$ a linear combination of the $\Phi_{i,k}$, $i<j$, with coefficients bounded uniformly in $k$, preserves all these properties; this is how the coefficients $\delta_k$ enter Lemma \ref{lem_Jordan}.

The adjoint operator is $A^*=A_0+Q^*$, with $(Q^*y)_j=\sum_{i<j}\beta_{ij}y_i$: this is again a cascade, for the reversed order of the components. The same construction, run on the components in reverse order, provides vectors $\widetilde\Psi_{j,k}=\varphi_ke_j+\widetilde\xi_{j,k}$, where $\widetilde\xi_{j,k}$ has nonzero components of index $>j$ only and $\Vert\widetilde\xi_{j,k}\Vert_{\mathcal{H}}=\mathrm{O}(k^{-1})$, and a strictly lower triangular matrix $\widetilde J_k$, uniformly bounded in $k$, such that $A^*\widetilde\Psi_k=\widetilde\Psi_k(-k^2\Id_{\R^N}+\widetilde J_k)$.

\paragraph{Riesz bases and biorthogonality.}
We apply Lemma \ref{lem_triangular} with $\mathcal{X}_j=L^2(0,\pi)e_j$, with the orthonormal basis $e_{j,k}=\varphi_ke_j$ of $\mathcal{H}$ and with the corrections $\zeta_{j,k}$, which are square summable by the above estimate: the family $(\Phi_{j,k})_{j,k}$ is a Riesz basis of $\mathcal{H}$. The reversed form of the same lemma shows that $(\widetilde\Psi_{j,k})_{j,k}$ is a Riesz basis as well.

Let $k,\ell\geq1$ and let $M$ be the matrix with entries $M_{ij}=\langle\Phi_{j,k},\widetilde\Psi_{i,\ell}\rangle_{\mathcal{H}}$. All these vectors belong to $D(A)=D(A^*)$, so that the identities $\langle A\Phi_{j,k},\widetilde\Psi_{i,\ell}\rangle_{\mathcal{H}}=\langle\Phi_{j,k},A^*\widetilde\Psi_{i,\ell}\rangle_{\mathcal{H}}$ read
$$
M(-k^2\Id_{\R^N}+J_k)=(-\ell^2\Id_{\R^N}+\widetilde J_\ell)^*M .
$$
When $\ell\neq k$, the two matrices involved have the single eigenvalues $-k^2$ and $-\ell^2$, which are distinct, and this Sylvester equation forces $M=0$. When $\ell=k$, the flag relations give $M_{ij}=0$ for $i>j$ and $M_{jj}=1$, so that $M$ is invertible and $M-\Id_{\R^N}=\mathrm{O}(k^{-1})$. Setting $\Psi_k=\widetilde\Psi_kM^{-*}$, that is,
$$
\Psi_{i,k}=\widetilde\Psi_{i,k}+\sum_{m>i}(M^{-*})_{mi}\widetilde\Psi_{m,k},
$$
we obtain a family which is biorthogonal to $(\Phi_{j,k})_{j,k}$, which still satisfies the dual half of \eqref{flagnormalizationN} because $M^{-*}$ is lower triangular with unit diagonal, and which is still a Riesz basis because $M^{-*}$ and its inverse are bounded uniformly in $k$. It is therefore the dual Riesz basis of $(\Phi_{j,k})_{j,k}$. Finally, the span of $\Psi_{1,k},\ldots,\Psi_{N,k}$ is the span of $\widetilde\Psi_{1,k},\ldots,\widetilde\Psi_{N,k}$, which is invariant under $A^*$, so that $A^*\Psi_k=\Psi_kS_k$ for some matrix $S_k$; pairing with $\Phi_{j,k}$ and using biorthogonality gives $(S_k)_{ji}=(-k^2\Id_{\R^N}+J_k)_{ij}$, which is the second identity in \eqref{similarityN}.

\paragraph{The generalized eigenspaces.}
By \eqref{constructionN} and the strict triangularity of $J_k$, we have $(A+k^2)^N\Phi_{j,k}=0$, so that $\Span(\Phi_{1,k},\ldots,\Phi_{N,k})\subset E_k$. Conversely, let $y\in E_k$, say $(A+k^2)^my=0$ for some $m\geq1$. By \eqref{similarityN}, every $\Psi_{i,\ell}$ belongs to the domain of every power of $A^*$, and
$$
(A^*+k^2)^m\Psi_\ell=\Psi_\ell\big((k^2-\ell^2)\Id_{\R^N}+J_\ell^*\big)^m .
$$
For $\ell\neq k$, the matrix $(k^2-\ell^2)\Id_{\R^N}+J_\ell^*$ is invertible, hence so is its $m$-th power, and the vectors $(A^*+k^2)^m\Psi_{i,\ell}$, $1\leq i\leq N$, span the same space as the vectors $\Psi_{i,\ell}$. Since
$$
\langle y,(A^*+k^2)^m\Psi_{i,\ell}\rangle_{\mathcal{H}}=\langle(A+k^2)^my,\Psi_{i,\ell}\rangle_{\mathcal{H}}=0 ,
$$
all the coordinates of $y$ outside the $k$-th block vanish, that is, $y\in\Span(\Phi_{1,k},\ldots,\Phi_{N,k})$. Hence $\dim E_k=N$, and the same argument applied to $A^*$ gives $\dim E_k^*=N$.

\end{document}